\documentclass[onefignum,onetabnum]{siamart171218}

\usepackage{amsmath, amssymb, mathrsfs}
\usepackage{physics}
\usepackage{lipsum}
\usepackage{amsfonts}
\usepackage{graphicx,mathtools}
\usepackage{algorithmic}
\usepackage{booktabs}
\usepackage[titletoc]{appendix}
\usepackage{bm}

\usepackage{enumitem}
\setlist[enumerate]{leftmargin=.5in}
\setlist[itemize]{leftmargin=.5in}

\newtheorem{Th}{Theorem}[section]

\newtheorem{Cor}[Th]{Corollary}

\newtheorem{Def}[Th]{Definition}
\newtheorem{Rem}[Th]{Remark}
\newtheorem{?}[Th]{Problem}

\def\R{{\mathbb R}}

\def\C{{\mathsf {cpl}}}
\def\N{{\mathbb N}}
\def\E{{\mathbb E}}
\def\R{{\mathbb R}}

\def\P{{\mathbb P}}

\def\V{{\mathbb V}}

\def\A{{\mathcal A}}

\def\cor{{\text {Corr}}}

\def\e{{\varepsilon}}

\def\re{{\text{re}}}

\def\ex{{\text{epr}}}
\def\ext{{\text{ept}}}

\def\mse{{\textsf{MSE}}}
\def\risk{{\textsf{Risk}}}
\def\lrmc{{\textsf{LRMC}}}
\DeclareMathOperator*{\argmax}{arg\,max}
\DeclareMathOperator*{\argmin}{arg\,min}

\headers{A bandit-learning approach to multifidelity approximation}{Y. Xu, V. Keshavarzzadeh, R.M. Kirby, 
and A. Narayan}

\title{A bandit-learning approach to multifidelity approximation\thanks{Submitted to the editors.
\funding{Y. Xu and A. Narayan are partially supported by NSF DMS-1848508. V. Keshavarzzadeh and A. Narayan are partially supported by AFOSR under award FA9550-20-1-0338.
V. Keshavarzzadeh and R.M. Kirby acknowledge that their part of this research was sponsored by ARL under cooperative agreement number W911NF-12-2-0023.}}}


\author{Yiming Xu\thanks{Department of Mathematics, and Scientific Computing and Imaging Institute, University of Utah
  (\email{yxu@math.utah.edu}, \email{akil@sci.utah.edu}).}
\and Vahid Keshavarzzadeh\thanks{Scientific Computing and Imaging Institute, University of Utah
  (\email{vkeshava@sci.utah.edu}).}
 \and  Robert M. Kirby\thanks{School of Computing, and Scientific Computing and Imaging Institute, University of Utah
  (\email{kirby@cs.utah.edu}.)}
\and Akil Narayan\footnotemark[2]}

\usepackage{amsopn}


\begin{document}

\maketitle

\begin{abstract}
Multifidelity approximation is an important technique in scientific computation and simulation. 
In this paper, we introduce a bandit-learning approach for leveraging data of varying fidelities to achieve precise estimates of the parameters of interest. 
Under a linear model assumption, we formulate a multifidelity approximation as a modified stochastic bandit and analyze the loss for a class of policies that uniformly explore each model before exploiting it. 
Utilizing the estimated conditional mean-squared error, we propose a consistent algorithm, adaptive Explore-Then-Commit (AETC), and establish a corresponding trajectory-wise optimality result.   
These results are then extended to the case of vector-valued responses, where we demonstrate that the algorithm is efficient without the need to worry about estimating high-dimensional parameters.  
The main advantage of our approach is that we require neither hierarchical model structure nor \textit{a priori} knowledge of statistical information (e.g., correlations) about or between models.  Instead, the AETC algorithm requires only knowledge of which model is a trusted high-fidelity model, along with (relative) computational cost estimates of querying each model.
Numerical experiments are provided at the end to support our theoretical findings.
\end{abstract}

\begin{keywords}
multifidelity, bandit learning, linear regression, Monte Carlo method, consistency
\end{keywords}

\begin{AMS}
62-08, 62J05, 65N30, 65C05
\end{AMS}

\section{Introduction}

Computational models are ubiquitous tools in different aspects of science and engineering, from the discovery of new constitutive laws in physics and mechanics to the design of novel systems such as multifunctional materials. Such models are primarily developed to describe the system of interest accurately such that they can replace time-consuming real-life experiments. In addition to accuracy, an important aspect to consider is the computational cost of model evaluations, which typically increases as the accuracy of the model increases. The trade-off between cost and accuracy constitutes a fundamental challenge in computational science that has been the subject of active research. This challenge has given rise to a widely studied class of methods that involves several computational models with various levels of accuracy and cost. In different contexts and formulations, such methods are called multifidelity, multilevel, multiresolution, etc. The common theme among all these methods is the availability of several models that describe the same system of interest but with varying accuracy and computational cost. 

\subsection{Multifidelity models}
The notion of fidelity may have different meanings in different problems. 
A typical scenario corresponds to the level of discretization for solving PDEs, where the finest mesh is referred to as the high-fidelity model and coarser meshes are lower fidelity models. 
Other common multifidelity settings include the interpolation and regression models \cite{Forrester09,Forrester08},  projection-based reduced-order models \cite{Sirovich87,Rozza08,Gugercin08,Benner15}, machine learning models~\cite{Vapnik98,Cortes95,Chang11} and other reduced-order models~\cite{Ng16,Majda10}. 
The multifidelity setting considered in this paper is a general abstraction that includes low-fidelity models instantiated as coarse discretizations, via reduced-order models (e.g., reduced-basis methods) or other types of emulators such as Gaussian processes \cite{Hesthaven16,Rasmussen06}. 

Combining the low-fidelity models with the high-fidelity model to accelerate computational efficiency is a fruitful idea in engineering science \cite{Peherstorfer_2018_survey}. It can be construed as a model management strategy with three main categories of approaches: 1) Adaptation of the low-fidelity model to the high-fidelity model by the use of high-fidelity information; approaches based on model correction fall into this category~\cite{Eldred2004,Alexandrov98,Alexandrov01}, 2) fusion of multiple fidelity modes such as control variate approaches~\cite{Bratley87,Hammersley64,Nelson87,Koutsourelakis09,Gorodetsky_2020}; and 3) filtering approaches such as importance sampling where low-fidelity models serve as filters and determine when to use the high-fidelity model~\cite{Christen05,Fox97,Narayan14,Peherstorfer14_SIAM_conf}. 
The method developed in this paper belongs to the realm of fusion approaches, where we assume in particular that a subset of low-fidelity models can be combined linearly to effectively predict the value of the high-fidelity model.

\subsection{Bandit learning}
The idea of bandit learning first appeared in a study \cite{Thompson_1933} of treatment design problems in clinical trials.
It was later developed as a powerful tool to study sequential decision-making problems in an uncertain environment.
The primary goal of bandit learning is to find an adaptive strategy that produces close-to-optimal decisions by minimizing the loss or regret. 
The uncertainty in an environment can be modeled using different mechanisms, such as random feedback \cite{Lai_1985, auer2002finite,dani2008stochastic,Rusmevichientong_2010}, adversarial manipulation \cite{Auer_2002,auer1995gambling,bubeck2012towards}, Markov chains \cite{anantharam1987asymptotically, gittins1979bandit, jaksch2010near}, etc., giving rise to a wealth of bandit setups that find use in numerous applications in practice \cite{bouneffouf2019survey, zhao2019multi}. 
Regret, which plays a similar role as a loss function in statistical learning, can be chosen in various forms, often depending on the specific application of interest. 
For example, cumulative loss \cite{auer2002finite,Auer_2002} is widely used as regret to measure the overall performance of a strategy, and a discounted version \cite{gittins1979bandit} can be used to account for the elapsed time during the strategy. In other situations where only the final decision matters, the loss at a fixed (final) step is adopted \cite{bubeck2009pure,audibert2010best,garivier2016optimal}. 
Despite the diversity of choices for regret, the overall goal is quite thematic: find a strategy that performs similarly to an oracle by investing a reasonable amount of computational complexity in a procedure that adaptively learns.  
The key philosophy behind bandit learning thus lies in efficiently learning a competitive strategy and then utilizing the learned knowledge for future use.     
A comprehensive treatment of the subject can be found in, for instance, \cite{lattimore2020bandit, bubeck2012regret}.

\subsection{Related work}\label{ssec:related}
Our approach in this article centers around using non-hierarchical linear regression for constructing a multifidelity estimator. The idea of using linear regression for multifidelity approximation is not completely new. 
For instance, recent work has introduced a general linear regression framework to study telescoping-type estimators in multifidelity inference \cite{Schaden_2020, schaden2020asymptotic}. This work treats different fidelities as factor models with heterogeneous noise structures. Assuming that the covariance matrix of the noise is known, the authors proposed a re-weighted least squares estimator which is shown to be the best linear unbiased estimator for the quantity of interest (under a fixed allocation choice). 
This framework offers a uniform perspective on many well-studied multifidelity and multilevel estimators \cite{giles2008multilevel,giles2015multilevel, Gorodetsky_2020,Peherstorfer_2016} and provides certain limits on their theoretical performance. However, the construction utilizes oracle information about model correlations, which are often not known beforehand and must first be learned from the data, placing practical restrictions on the immediate deployment of such procedures. There is limited work on multifidelity methods with non-hierarchical dependence, i.e., methods that do not necessarily assume an ordering of models based on cost/accuracy \cite{gorodetsky_mfnets_2020}, and algorithms for identifying non-hierarchical relationships are an area of active research.

In \cite{Zhang_2018}, the authors proposed to compute a high-fidelity model using a linear combination of low-fidelity surrogates corrected by a deterministic discrepancy function. 
This approach is easy to implement and allows classical tools to be used in the analysis.  
Nevertheless, certain drawbacks exist: 
The assumption on the discrepancy function can be restrictive when noise pollutes data in a random fashion, 
and realizing the optimal performance of the method requires selecting the `best' surrogate model in practice, which is a nontrivial task when the computational budget is limited.

The work \cite{kandasamy2016multi} has a similar title to this paper, but the setup and goals considered there are completely different.

\subsection{Contributions of this paper}
This paper provides techniques to address some of the challenges identified in Section \ref{ssec:related} that limit the applicability of current approaches.
Under a type of linear model assumption (that is slightly stronger than what is assumed in \cite{Schaden_2020}), we introduce a bandit-learning approach that, under a budget constraint, facilitates computational learning of the relation between different models before an exploitation choice is committed.
Our algorithm seeks to identify the best selection of regressors by investing a portion of the budget in an exploration phase. This selection is then utilized in an exploitation phase to construct a surrogate for the high-fidelity model, which is significantly less expensive but exhibits comparable accuracy.  

We make three contributions in this paper:
\begin{itemize}
\item 
We formulate multifidelity approximation as a bandit-learning type problem under a linear model assumption. 
In particular, we introduce a class of policies called uniform exploration policies and define their associated loss (regret).  
We also derive an asymptotic formula for their loss as the computational budget goes to infinity. 
\item
  We propose an adaptive Explore-Then-Commit (AETC) algorithm (Algorithms \ref{alg:aETC}) that automatically identifies a trade-off point between exploration and exploitation. 
We prove that the estimator produced by the algorithm is consistent and asymptotically matches the best regression model with optimal exploration.  In particular, this algorithm requires only model cost information and identification of a high-fidelity model. No model correlations or statistics are needed as input.
\item
We initially consider scalar-valued responses, but subsequently generalize to vector-valued high-fidelity models, and demonstrate that the algorithm is efficient despite some potentially high-dimensional parameter estimation procedures. 

\end{itemize} 
Our numerical results strongly support our theoretical findings. 
Our methodology enjoys great generality as it does not require any particular knowledge of the models, i.e., hierarchical structure or correlation statistics. The procedure requires only the identification of a trusted high-fidelity model, the ability to query the models themselves, and a relative cost estimate of each model relative to the high-fidelity model.
It is worth mentioning that our approach can be extended to estimate more complicated statistics such as the cumulative distribution functions (CDF) of QoIs \cite{xu2021budget}, but this requires stronger assumptions.  

The rest of the paper is organized as follows. 
In Section \ref{<}, we introduce the abstract setup of multifidelity approximation with which we will be working in the rest of the paper, followed by a brief review of the key ideas behind stochastic bandits. 
In Section \ref{blp}, we formulate multifidelity approximation as a modified bandit-learning problem, and in Section \ref{ttt}, we prove the consistency of the estimators used during the exploitation phase and derive a nonasymptotic convergence rate. 
In Section \ref{alg}, we propose an adaptive algorithm, AETC, based on the estimated conditional mean-squared errors from the analysis and establish a trajectory-wise optimality result for it.  
In Section \ref{sec:vec}, we extend our results developed in the previous sections to the case of vector-valued high-fidelity models and justify the efficiency of the estimation procedures where high-dimensional parameters are involved.  
In Section \ref{num}, we provide a detailed study of the novel AETC algorithm via numerical experiments that verifies our theoretical statements and demonstrates the utility of the AETC algorithm. 
In Section \ref{concl}, we conclude by summarizing the main results in the paper.

\section{Background}\label{<}
\subsection{Notation}\label{ps}
Fix $n\in\N$. 
We let $Y, X_1, \cdots, X_n$ be random vectors (possibly of different dimensions) representing the high-fidelity model and $n$ surrogate low-fidelity models, respectively.  
Let $c_i$ ($i\in [n]$) and $c_0$ be the respective cost of sampling $X_i$ and $Y$.  
For example, in parametrized PDEs, $X_i$ is the approximate solution given by the $i$-th solver under random coefficients of a PDE, and $Y$ is the solution given by the most accurate solver, which is assumed to be the ground truth. 
The cost $c_i$ corresponds to the computational time.  We make no assumptions about the accuracy or costs of $X_i$ relative to those of $X_{i+1}$. In particular, the index $i$ does not represent an ordering based on cost, accuracy, or hierarchy.
For $X_i$ ($i\in [n]$) and $Y$, let $f_i$ and $f$ be output quantity of interest maps that bring $X_i$ and $Y$, respectively, to $f_i(X_i)\in\R^{k_i}$ and $f(Y)\in\R^{k_0}$, which are the quantities of practical interest. 
Understanding the mean of $f(Y)$ is crucial for many simulation problems in engineering. 

Computing the expectation of $f(Y)$ may not seem challenging at all: Under mild assumptions on the distribution of $Y$, a Monte Carlo (MC) procedure exploits the law of large numbers using an ensemble of independent samples to accurately estimate the mean of $f(Y)$. 
However, in practice, obtaining numerous samples of $Y$ can be computationally expensive if $c_0$ is large. This motivates the idea of estimating $\E[f(Y)]$ using low-fidelity model outputs $f_i(X_i)$, which often contain some information about $f(Y)$ and are cheaper to sample. 
For instance, when $f_i(X_i)$ and $f(Y)$ are scalars and the correlation between them is high, one can construct an unbiased estimator for $\E[f(Y)]$ by taking a telescoping sum of the low-fidelity models plus a few samples of the ground truth $f(Y)$. Optimizing over the allocation parameters under a minimum variance criterion leads to the Multi-Fidelity Monte-Carlo (MFMC) estimator.  
It is shown in \cite{Peherstorfer_2016} that the MFMC method outperforms MC by a substantial margin on several test datasets. However, successful implementation of the method requires both knowledge of statistical information of and between the models, as well as a hierarchical structure, neither of which may be known beforehand. More precisely, in MFMC, a training stage is required to learn correlations between models, and guidance on how to perform this training is heuristic. Our procedure is a principled approach that effects a similar type of training in an automated way and is accompanied by theoretical guarantees.
In the rest of this section, we introduce a general framework in terms of learning the high-fidelity model from its surrogates. 
Our method utilizes ideas from bandit learning and therefore is less reliant on any existing knowledge of the models. 
We will need a general linear model assumption that is slightly stronger than the generic correlation assumption and has been widely used in statistics. 
For simplicity, we assume $k_i=1$ for $i\in [n]$ and denote
\begin{align} 
X^{(i)}: = f_i(X_i)\in\R.
\end{align} 
We will first deal with the case where $f(Y)$ is a scalar, i.e., $k_0=1$. 
The result will be generalized to the vector-valued responses in Section \ref{sec:vec}. 
 
\subsection{Linear regression}\label{lastt}
Let $S\subseteq  [n]$ be a selection of low-fidelity models with $|S| = s > 0$.  
Suppose $f(Y)$ and $\{X^{(i)}\}_{i\in S}$ satisfy the following \emph{linear model assumption}:
\begin{align}
f(Y) &= X_S^T\beta_S + \e_S,\label{1}
\end{align}
where $$X_S = (1,(X^{(i)})_{i\in S})^T\in\R^{s+1}$$ is the regressor vector, which includes a constant (intercept) term; $\beta_S\in\R^{s+1}$ is the coefficient vector; and $\e_S\in\R$ is model noise, which we assume is independent of $X_S$. 
For convenience, we assume in the following discussion that $\e_S$ is a centered \emph{sub-Gaussian} random variable with variance $\sigma_S^2$.

Note that even though \eqref{1} considers only linear interactions with $X_S$, it is less restrictive than it appears.
Indeed, a key assumption implied by \eqref{1} is that the conditional expectation of $f(Y)$ given $X_S$ (i.e., $\E[f(Y)|X_S]$) is a linear function of $X_S$. 
By definition, $\E[f(Y)|X_S]$ is a measurable function of $X_S$, which can be approximated using, e.g., polynomials of $X_S$ on any compact set under mild regularity conditions.
When the linearity assumption is violated, one can add higher order terms of $X_S$ as new regressors to fit a larger linear model to mitigate the model misspecification effect. 
This procedure will not incur any additional computational cost.
Nevertheless, identifying an optimal set of appropriately expressive regressors (features) is a much harder problem that goes beyond the scope of this paper.

Our goal is to seek an efficient estimator for $\E[f(Y)]$, so taking the expectation of \eqref{1} yields
\begin{align}
\E[f(Y)] = \E[X_S^T]\beta_S.\label{2}
\end{align}
The term $\E[X_S]$ on the right-hand side of \eqref{2} can be estimated by averaging the independent joint samples of $X_S$, which is the same as the Monte Carlo applied to the conditional expectation $\E[f(Y)|X_S]$, similar to a model-assisted approach in sampling statistics \cite{fuller2011sampling}.

What is gained from sampling from $f(Y)|X_S$ instead of $f(Y)$? The two major advantages are:
\begin{itemize}
\item
The $N$-sample Monte Carlo estimator for $\E[f(Y)]$ based on sampling $f(Y)$ has mean-squared error $\V[f(Y)]/N$, where $\V[ \cdot ]$ denotes the variance.
The same procedure via sampling $f(Y)|X_S$ has mean-squared error $\V[\E[f(Y)|X_S]]/N$. 
Since conditioning does not increase variance, the numerator of the latter is bounded by the numerator of the former for any fixed $N$.
\item 
Given a fixed budget, when $\sum_{i\in S}c_i\ll c_0$, the number of affordable samples for $f(Y)|X_S$ is much larger than $f(Y)$, which can significantly reduce the variance of the estimator.  
\end{itemize}
Both observations provide some heuristics that \eqref{2} may give rise to a better estimator for $\E[f(Y)]$ under certain circumstances. 

In practice, neither the best model index set $S$ (which is referred to as \textit{model} $S$ in the rest of the article) nor the corresponding $\beta_S$ is known. 
Thus, we cannot avoid expending some resources to estimate $\beta_S$ for every model $S\subseteq  [n]$ before deciding which model can best predict $\E[f(Y)]$. One possible way to achieve this is via bandit learning.

\begin{table}
  \begin{center}
  \resizebox{\textwidth}{!}{
    \renewcommand{\tabcolsep}{0.4cm}
    \renewcommand{\arraystretch}{1.3}
    {\scriptsize
    \begin{tabular}{@{}cp{0.8\textwidth}@{}}
    \toprule
      $X^{(i)}$ & the $i$-th regressor \\ 
      $f(Y)$ & the response variable (vector) \\
      $k_i, k_0$ & the dimension of $X^{(i)}$ and $f(Y)$ \\
      $m$ & the number of samples for exploration\\
      $n$ & the number of regressors \\
      $B$ & total budget\\
      $S$ & the model index set (indices of regressors used in regression)\\
       $(c_i)_{i=0}^n/c_{\ex}/c_{\ext}(S)$ & cost parameters/total cost/cost of model $S$\\
       $N_S$ & the affordable samples in model $S$\\
      $\beta_S$& coefficients (matrix) of model $S$ \\
      $X_{\ex,\ell}$& the $\ell$-th sample in the exploration phase \\
      $Z_S$ & the design matrix in the exploration phase in model $S$ \\
      $\e_S/\eta_S$ & the noise variable/vector in model $S$ \\
      $x_S$ & the mean of the regressors in model $S$ \\
      $\Sigma_S$& the covariance matrix of the regressors in model $S$ \\
      $\sigma_S^2$ & the variance in model $S$ (single response) \\
      $Q$ & the weight matrix in defining the $Q$-weighted risk \\
      $\Gamma_S$ & the covariance matrix of the noise in the case of vector-valued response \\
      \bottomrule
    \end{tabular}
  }
    \renewcommand{\arraystretch}{1}
    \renewcommand{\tabcolsep}{12pt}
  }
  \end{center}
\caption{\small Notation used throughout this article.}\label{tab:notation}
\end{table}


\subsection{Stochastic bandits}\label{rbl}
This section provides a brief overview of the philosophy behind bandit learning, and in particular, we focus on stochastic bandits, which is the subfield most relevant to our procedure. 
 
Consider a \textit{multiarmed bandit} setup: we are faced with $k$ slot machines or \textit{arms}, where $k\in\N$ is fixed. 
At each integer time $t$, a player pulls exactly one arm and this player will receive a reward. 
Rewards generated from the same arm at different times are assumed to be independent and identically distributed; rewards generated from different arms are independent and their distributions are stationary in time.  
Given a fixed (time) horizon $N$, the goal of stochastic bandit learning is to seek a policy with small `regret'.
A \emph{policy} $\pi = (\pi_1, \cdots, \pi_N)\in [k]^N$
is a random sequence of choices adapted to the natural filtration generated by past observations and actions. 
`Regret' is a special type of loss function that measures the quality of a policy.  
In the classical setup, the regret of a policy $\pi$ is defined as 
\begin{align}
R_N(\pi): &= N\mu_{\max} - \E\left[\sum_{t\in [N]}z_{t}(\pi)\right] \nonumber\\
&= \sum_{i\in [k]}T_N(i)\Delta_i& T_N(i) = \E\left[\#\{t\leq N, \pi_t = i\}\right],\label{blr}
\end{align}
where $z_t(\pi)$ is the reward received at time $t$ under policy $\pi$ (i.e., corresponds to reward $\pi_t$ at time $t$), $\mu_i$ is the expectation of the reward distribution of arm $i$, $\mu_{\max} : = \max_{i\in [k]}\mu_i$, and $\Delta_i: = (\mu_{\max}-\mu_i)$ is the \textit{suboptimality} gap of arm $i$. 
Intuitively, \eqref{blr} measures the difference between the average total reward under $\pi$ compared to the best (oracle) mean reward, which is generally not known.

In practice, we assume that the true reward distributions are not available, and so it is necessary to expend effort estimating the expected reward of each arm. 
A simple idea to estimate these average rewards is to pull each arm a fixed number of times $m$, and use the gathered data to estimate the mean rewards.   
Such a process is called an \textit{exploration} stage in bandit learning. 
Based on the estimated rewards, we can then make a decision about which arm performs the best and repeatedly select it for the remaining time (the \textit{exploitation} phase) until the horizon is reached. 
A direct combination of the exploration and exploitation gives the Explore-Then-Commit (ETC) algorithm, shown in Algorithm \ref{alg:bETC}.

\begin{algorithm}
\hspace*{\algorithmicindent} \textbf{Input}: $m$: the number of exploration on each arm \\
    \hspace*{\algorithmicindent} \textbf{Output}:  $\pi=(\pi_t)_{t\in [N]}$
 \caption{Explore-then-Commit (ETC) algorithm for stochastic bandits} 
 \begin{algorithmic}[1]
 \IF{$t\leq mk$}
 \STATE{$\pi_t =  \lceil t\bmod {k}\rceil$}
 \ELSE
 \STATE{$\pi_t = \argmax_{i\in [k]}\widehat{\mu}_{i}(mk)$, where $\widehat{\mu}_{i}(mk)=\frac{1}{m}\sum_{t=m\cdot (i-1)+1}^{m\cdot i}z_{t}(\pi)$}
 \ENDIF
\end{algorithmic}
\label{alg:bETC}
\end{algorithm}
The success of the ETC algorithm closely depends on the input parameter $m$, which dictates how many times each arm is pulled in the exploration phase, and is therefore indicative of the cost of this phase. A small $m$ may result in poor estimation (and thus poor exploration), and in this case, there is a considerable chance that the chosen exploitation strategy will be suboptimal. A large $m$, on the other hand, leaves little room for exploitation, and regret will then be dominated by the exploration phase. A good choice of $m$ lies in finding the right trade-off point between exploration and exploitation. When $k=2$ and reward distributions are sub-Gaussian, a near-optimal $m$ can be explicitly computed \cite[Chapter 6]{lattimore2020bandit}.  
However, such explicit expressions often rely on the knowledge of the suboptimality gaps $\Delta_i$, which are not available in general. 
More useful algorithms which can be viewed as adaptive generalizations of the ETC include the Upper-Confidence Bound (UCB) algorithm \cite{auer2002finite} and the Elimination algorithm \cite{Auer_2010}. 

One generalization of stochastic bandits pertaining to the multifidelity approximation problem of our interest is budget-limited bandits \cite{tran2010epsilon}, where each arm has a cost for pulling, and the number of pulls is constrained by a total budget instead of the time horizon. 
Similar algorithms as well as the regret analysis in the stochastic bandits can be carried out in the budget-limited setup \cite{tran2010epsilon, tran2012knapsack}.


\section{A bandit-learning perspective of the multifidelity problem}\label{blp}

In this section, we demonstrate that multifidelity approximation fits into a modified framework of bandit learning, which we exploit to develop an algorithm. 
Since our multifidelity goal is different from that in classical stochastic bandits, both the action set and the loss function (regret) must be tailored. We begin with the former. 

\subsection{Action sets and uniform exploration policies}
Our goal is to construct a regression-based MC estimator for $\E[f(Y)]$ using the best regression model. 
However, the relationship between $f(Y)$ and $X_S$ is unknown, requiring us to first estimate the coefficient vector $\beta_S$ in \eqref{2} for each $S$ and then decide which model is optimal. 
As a consequence, the action set contains both the exploration and exploitation options for each $S\subseteq  [n]$.   
Denote by $\A$ the set of actions. Then, we can write $\A$ as 
\begin{align}
\A = \left\{a_{\ex}(S), a_{\ext}(S)\right\}_{S\subseteq  [n]}
\end{align}
where 
\begin{align*}
&a_{\ex}(S): \text{collect a sample of $(X_S,f(Y))$}\\
&a_{\ext}(S): \text{use the remaining budget to sample $X_S$ }.
\end{align*}
As opposed to stochastic bandits, actions here are highly correlated.
For example, exploration will not be allowed whenever an exploitation action is triggered (which exhausts the budget), separating exploration and exploitation into two distinct phases. 
In addition, action $a_{\ex}(S)$ yields data for every $S'$ satisfying $S'\subseteq  S$. 
For simplicity, we specialize our action set to consider only \emph{uniform exploration policies}, i.e, each exploration yields a sample of all regressors, incurring the cost
\begin{align}
c_{\ex} := \sum_{i=0}^nc_i.\label{expc}
\end{align}
This procedure is similar to the ETC algorithm (Algorithm \ref{alg:bETC}) where arms are explored to the same degree during exploration.

Let $B>0$ be a given, fixed total budget, and let $m> n+1$ be the number of exploration actions. (We will specify how $m$ is chosen later.)
The budget $B_{\ex}$ spent on exploration under a uniform exploration policy $\pi$ and the budget $B_{\ext}$ remaining for exploitation are given by
\begin{align*}
  B_{\ex} &= c_{\ex}m, & B_{\ext} &= B - B_{\ex}.
\end{align*}
During exploration, each action yields a sample of the form
\begin{align*}
&X_{\ex,\ell} =\left (1, X^{(1)}_\ell, \cdots, X^{(n)}_\ell, f(Y_\ell)\right)^T& \ell\in [m], 
\end{align*}
where the subscript $\ell$ is a sampling index. For $S\subseteq  [n]$, let $X_{\ex,\ell}|_S\in \R^{s+1}$ (including the intercept term) and $X_{\ex,\ell}|_Y \in \R$ be the restriction of $X_{\ex,\ell}$ to model $S$ and $f(Y)$, respectively. The coefficient vector $\beta_S$ in \eqref{1} can be estimated by a standard least squares procedure:
\begin{align}
  \widehat{\beta}_S&= Z_S^\dagger X_\ex|_Y \label{lscoef}
\end{align} 
where 
\begin{align*}
  Z_S &= \left(X_{\ex, 1}|_S, \cdots, X_{\ex, m}|_S\right)^T, & 
  X_{\ex}|_Y &= \left(X_{\ex,1}|_Y, \ldots X_{\ex,m}|_Y \right)^T
\end{align*}
is the design matrix and data vector, respectively, and $Z_S^\dagger$ is the Moore-Penrose pseudoinverse of $Z_S$.
For simplicity, we will assume that the design matrix has full rank in the following discussion, so that $Z_S^\dagger: =  \left(Z_S^TZ_S\right)^{-1}Z_S^T$.
In exploitation, one selects an action $a_{\ext}(S)$ for some $S\subseteq  [n]$ and uses \eqref{2} to build an MC estimator for $\E[f(Y)]$. 
The true coefficient vector $\beta_S$ is unknown but can be replaced by the estimate $\widehat{\beta}_S$, yielding the 
Linear Regression Monte-Carlo (LRMC) estimator associated with model $S$:
\begin{align}
\lrmc_S = \frac{1}{N_S}\sum_{\ell\in [N_S]}X_{S,\ell}^T\widehat{\beta}_S,\label{LRMC}
\end{align}
where $N_S$ is the number of affordable samples to exploit model $S$:
\begin{align*}
&N_S = \left\lfloor\frac{B_{\ext}}{c_{\ext}(S)}\right\rfloor =  \left\lfloor\frac{B - c_{\exp}m}{c_{\ext}(S)}\right\rfloor & c_{\ext}(S) := \sum_{i\in S}c_i,
\end{align*}
and $X_{S,\ell}$ are i.i.d. samples of $X_S$ which are independent of the samples in the exploration stage. 
Here we choose not to reuse samples from the exploration phase during the exploitation process for convenience of analysis. 

\begin{Rem}\label{rem:recycle}
A similar analysis of the LRMC estimator that reuses or recycles the exploration data can be carried out but with more complicated notation.
For most applications that we consider, where $m/N_S\ll 1$ (since the high-fidelity model is often substantially more expensive than the low-fidelity emulators), reuse has little impact on the estimator in general. 
Regardless of analysis, one can always recycle exploration samples in a practical setting; our numerical results show that recycling exploration samples has negligible impact on the performance of our procedure for the examples we have tested.
See Appendix \ref{ap:0} for a theoretical justification, and Figure \ref{fig:1} in Section \ref{num} for numerical evidence.
\end{Rem}  
We will show in Section \ref{ttt} that for fixed $S\subseteq  [n]$, $\lrmc_S$ defined in \eqref{LRMC} is, almost surely, a consistent estimator for $\E[f(Y)]$, and we will provide a convergence rate. 

\subsection{Loss function}
The loss function used in bandit learning is called regret, which is often defined by the reward difference between a policy and an oracle. 
In our case, it is more convenient to define loss as a quantity that we wish to minimize.
Note that the output of a uniform exploration policy $\pi$ is an LRMC estimator \eqref{LRMC},
where the selected model $S$ satisfies $\pi_{m+1} = a_{\ext}(S)$.  
One way to measure the immediate quality of \eqref{LRMC} is through the following conditional mean-squared error (MSE) on $\widehat{\beta}_S$: 
\begin{align}
\mse_S|_{\widehat{\beta}_S} = \E\left[\left(\lrmc_S - \E[f(Y)]\right)^2\big | \widehat{\beta}_S\right].\label{def:mse}
\end{align}
The $\lrmc_S$ alone, despite being an unbiased estimator (which is easily verified using independence), is not a sum of i.i.d. random variables unless conditioned on $\widehat{\beta}_S$. Once conditioned, $\lrmc_S$ becomes a sum of i.i.d. random variables that is a biased estimator for $\E[f(Y)]$.
Define the following statistics of $X_S$,
\begin{align*}
  x_S &= \E [X_S], & \Sigma_S = \mathrm{Cov} [X_S].
\end{align*}
By writing $\mse_S|_{\widehat{\beta}_S}$ using the bias-variance decomposition, we obtain
\begin{align}
\mse_S|_{\widehat{\beta}_S} &= (x_S^T(\widehat{\beta}_S-\beta_S))^2 + \V\left[\lrmc_S\big |\widehat{\beta}_S\right] \nonumber\\
& = (\widehat{\beta}_S-\beta_S)^Tx_Sx_S^T(\widehat{\beta}_S-\beta_S) + \frac{1}{N_S}\widehat{\beta}_S^T\Sigma_S\widehat{\beta}_S ,\label{ana}
\end{align}
where $\V[ \cdot  | \widehat{\beta}_S ]$ is the $\widehat{\beta}_S$-conditional variance operator.
Note that \eqref{ana} decomposes the loss incurred in the exploration and exploitation phases by the bias term and variance term, respectively.
The \emph{average conditional MSE} of the LRMC is defined as \eqref{ana} averaged over the randomness of the model noise $\e_S$ in exploration:
\begin{align}
\overline{\mse}_S|_{\widehat{\beta}_S} &:= \E_{\e_S}\left[\mse_S|_{\widehat{\beta}_S}\right] \nonumber\\
&= \frac{1}{N_S}\left[\beta_S^T\Sigma_S\beta_S + \sigma_S^2\tr(\Sigma_S(Z_S^TZ_S)^{-1})\right] + \sigma_S^2\tr(x_Sx_S^T(Z_S^TZ_S)^{-1}).\label{dep}
\end{align}
Note that $\overline{\mse}_S|_{\widehat{\beta}_S}$ defined in \eqref{dep} is random, and one could alternatively define it by averaging all the randomness (both $\e_S$ and $Z_S$). However, this approach is more difficult to analyze due to the term $\E[(Z_S^TZ_S)^{-1}]$. 
In the following discussion, we will use \eqref{dep} as the \emph{loss function} associated with the uniform exploration policy $\pi$ to examine its average quality in terms of estimating $\E[f(Y)]$.

\section{Consistency of the LRMC estimator}\label{ttt}

In this section, we will show that the LRMC estimators constructed in the previous section are consistent estimators for $\E[f(Y)]$. 
The result can be derived as a corollary of a nonasymptotic estimate of the convergence rate of the $\widehat{\beta}_S$-conditional MSE.
The following definition will be used in the subsequent analysis:
\begin{Def}[$\alpha$-Orlicz norm]
The $\alpha$-Orlicz ($\alpha\geq 1$) norm of a random variable $W$ is defined as 
\begin{align*}
\|W\|_{\psi_\alpha}: = \inf\{C>0: \E[\exp(|W|^\alpha/C^\alpha)]\leq 2\}.
\end{align*}
\end{Def}

\begin{Th}\label{thm:conss}
Fix $S\subseteq  [n]$.
Suppose $X_S$ satisfies
\begin{align}
&\max\left\{\sup_{\|\theta\|_2=1}\E[\langle X_S,\theta\rangle^4]^{1/4}, \left\|\|X_S\|_2\right\|_{\psi_1}\right\} \leq K<\infty&K\geq 1, \label{...}
\end{align}
where $\|\cdot\|_{\psi_1}$ is the $1$--Orlicz norm, and
$\Lambda_S :=  \E[X_SX_S^T] = x_Sx_S^T +\Sigma_S $ is invertible. 
Then, for large $m$, with probability at least $1-m^{-2}$, 
\begin{align}
&\mse_S|_{\widehat{\beta}_S} \lesssim \frac{\beta_S^T\Sigma_S\beta_S}{N_S}+(s+1)\sigma_S^2\frac{\log m}{m},\label{sd}
\end{align}
where the implicit constant in $\lesssim$ is independent of $m$.  
\end{Th}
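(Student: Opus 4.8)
The plan is to start from the bias--variance decomposition \eqref{ana} and reduce the entire estimate to the control of a single quadratic form. Let $\bm{\e} := X_\ex|_Y - Z_S\beta_S \in \R^m$ denote the vector of exploration noises; by \eqref{1} its entries are i.i.d.\ copies of $\e_S$, and by assumption it is independent of $Z_S$. The least-squares formula \eqref{lscoef} then gives the error
\begin{align*}
e := \widehat{\beta}_S - \beta_S = (Z_S^TZ_S)^{-1}Z_S^T\bm{\e}.
\end{align*}
The key observation is that $x_Sx_S^T\preceq \Lambda_S$ and $\Sigma_S\preceq\Lambda_S$ (since $\Lambda_S = x_Sx_S^T+\Sigma_S$), so the single quantity $e^T\Lambda_S e$ simultaneously dominates the bias term $(x_S^Te)^2$ in \eqref{ana} and the perturbation $e^T\Sigma_S e$ produced when the variance term $\widehat{\beta}_S^T\Sigma_S\widehat{\beta}_S$ is expanded about $\beta_S$. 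It therefore suffices to show that, with probability at least $1-m^{-2}$,
\begin{align*}
e^T\Lambda_S e \lesssim (s+1)\sigma_S^2\frac{\log m}{m}.
\end{align*}

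I would establish this through two high-probability events. First, a covariance-concentration event $\mathcal{E}_1$ on which $\tfrac1m Z_S^TZ_S$ is sandwiched between fixed positive multiples of $\Lambda_S$, equivalently $\tfrac{c}{m}\Lambda_S^{-1}\preceq (Z_S^TZ_S)^{-1}\preceq \tfrac{C}{m}\Lambda_S^{-1}$ in the Loewner order. This is exactly where hypothesis \eqref{...} is used: the fourth-moment bound controls the fluctuation of the sample second-moment matrix $\tfrac1m\sum_{\ell}X_{\ex,\ell}|_S(X_{\ex,\ell}|_S)^T$, while the $\psi_1$-control of $\|X_S\|_2$ bounds the operator norm of the individual rank-one summands and thereby drives a (truncated) matrix-Bernstein argument; together with invertibility of $\Lambda_S$ this yields $\P(\mathcal{E}_1)\geq 1-\tfrac12 m^{-2}$ once $m$ is large. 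Second, conditioning on $Z_S$, I write $e^T\Lambda_S e = \bm{\e}^TM\bm{\e}$ with $M := Z_S(Z_S^TZ_S)^{-1}\Lambda_S(Z_S^TZ_S)^{-1}Z_S^T$, a quadratic form in the centered sub-Gaussian vector $\bm{\e}$. On $\mathcal{E}_1$ the sandwiching gives $\tr(M)=\tr(\Lambda_S(Z_S^TZ_S)^{-1})\lesssim (s+1)/m$, $\|M\|_{\mathrm{op}}\lesssim 1/m$, and $\|M\|_F\lesssim \sqrt{s+1}/m$, so the Hanson--Wright inequality produces a conditional event $\mathcal{E}_2$ with $\P(\mathcal{E}_2\mid Z_S)\geq 1-\tfrac12 m^{-2}$ on which $\bm{\e}^TM\bm{\e}\lesssim \sigma_S^2\bigl(\tfrac{s+1}{m}+\tfrac{\sqrt{s+1}\sqrt{\log m}}{m}+\tfrac{\log m}{m}\bigr)\lesssim (s+1)\sigma_S^2\tfrac{\log m}{m}$.

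It then remains to assemble the pieces on $\mathcal{E}_1\cap\mathcal{E}_2$. The bias term is bounded directly by $e^T\Lambda_S e\lesssim (s+1)\sigma_S^2\tfrac{\log m}{m}$. For the variance term I expand $\widehat{\beta}_S^T\Sigma_S\widehat{\beta}_S = \beta_S^T\Sigma_S\beta_S + 2\beta_S^T\Sigma_S e + e^T\Sigma_S e$ and bound the last two summands using $e^T\Sigma_S e\leq e^T\Lambda_S e$ and, on the cross term, Cauchy--Schwarz followed by Young's inequality; dividing by $N_S\geq 1$, these contribute only terms of order $\beta_S^T\Sigma_S\beta_S/N_S$ and $(s+1)\sigma_S^2\tfrac{\log m}{m}$ or smaller, which gives \eqref{sd} after a union bound over $\mathcal{E}_1\cap\mathcal{E}_2$ (total failure probability at most $m^{-2}$). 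I expect the covariance-concentration step $\mathcal{E}_1$ to be the main obstacle: under the weak moment hypotheses \eqref{...}, with no sub-Gaussianity of $X_S$ assumed, forcing the operator-norm sandwiching to hold with the polynomially small failure probability $\tfrac12 m^{-2}$ is delicate, and it is precisely the interplay between the fourth-moment bound (variance) and the $\psi_1$ bound (tails) that makes the requisite heavy-tailed covariance estimate go through. By contrast, the Hanson--Wright step and the algebraic expansion of the variance term are routine.
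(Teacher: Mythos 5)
Your proposal is correct and follows essentially the same route as the paper's proof: both rest on (i) the least-squares error representation $\widehat{\beta}_S-\beta_S = Z_S^\dagger\eta_S$, (ii) operator-norm concentration of $\tfrac1m Z_S^TZ_S$ around $\Lambda_S$ under hypothesis \eqref{...}, (iii) the Hanson--Wright inequality for quadratic forms in the sub-Gaussian noise, and (iv) the trace identity $\tr(\Lambda_S^{-1}(x_Sx_S^T+\Sigma_S))=s+1$ producing the $(s+1)$ factor. The only differences are organizational: you consolidate the bias and variance-perturbation terms into the single quadratic form $e^T\Lambda_S e$ via Loewner domination and invoke Hanson--Wright once, whereas the paper applies it separately to $\|Z_S^\dagger\eta_S\|_2^2$ and $\|B_S\eta_S\|_2^2$ with $B_S=\sqrt{x_Sx_S^T}Z_S^\dagger$; and for the covariance-concentration step that you rightly flag as the crux and propose to establish by a truncated matrix-Bernstein argument, the paper simply cites an existing deviation result of Mendelson--Pajor tailored to exactly this fourth-moment-plus-$\psi_1$ assumption, followed by a perturbation expansion of the inverse.
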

\begin{proof}
See Appendix \ref{ap:1}.
\end{proof}

Theorem \ref{thm:conss} implies the following consistency result of the LRMC estimator, which follows immediately from \eqref{sd} and an application of the Borel-Cantelli lemma:

\begin{Cor}[consistency of the LRMC]
  Let $\{B_k\}_{k=1}^\infty$ satisfy $\lim_{k \uparrow \infty} B_k = \infty$, and let $m_k, N_k$ be the respective exploration round and exploitation samples satisfying $\min (m_k, N_k)\to\infty$. 
For fixed $S\subseteq  [n]$, denote $\lrmc_{S,k}$ the LRMC estimator under budget $B_k$, and $\widehat{\beta}_{S,k}$ the corresponding estimator for $\beta_S$.  
Then, for almost every sequence $\{\widehat{\beta}_{S,k}\}_{k=1}^\infty$, $\lrmc_{S,k}\xrightarrow{\P}\E[f(Y)]$ as $k\to\infty$. 
\end{Cor}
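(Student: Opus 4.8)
The plan is to exploit the fact that the full randomness splits into two independent sources: the exploration data, which determines the estimators $\widehat{\beta}_{S,k}$ (and hence $\mse_{S,k}|_{\widehat{\beta}_{S,k}}$), and the fresh exploitation samples, over which $\lrmc_{S,k}$ is, conditionally on $\widehat{\beta}_{S,k}$, an i.i.d.\ average. Since $\mse_{S,k}|_{\widehat{\beta}_{S,k}} = \E[(\lrmc_{S,k}-\E[f(Y)])^2\mid \widehat{\beta}_{S,k}]$ is a measurable function of the exploration data alone, the natural route is to first establish
\[
\mse_{S,k}|_{\widehat{\beta}_{S,k}} \longrightarrow 0 \qquad \text{almost surely (over the exploration randomness),}
\]
and then convert this almost-sure decay of the conditional second moment into conditional convergence in probability via the conditional Chebyshev inequality. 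Concretely, for any fixed $\epsilon>0$,
\[
\P\!\left(|\lrmc_{S,k}-\E[f(Y)]|>\epsilon \,\middle|\, \widehat{\beta}_{S,k}\right) \le \frac{\mse_{S,k}|_{\widehat{\beta}_{S,k}}}{\epsilon^2},
\]
so the almost-sure vanishing of the right-hand side forces, for almost every realization of $\{\widehat{\beta}_{S,k}\}$, the conditional tail probability to vanish for every $\epsilon$; metrizing conditional convergence in probability (e.g.\ through $\E[\min(|\lrmc_{S,k}-\E[f(Y)]|,1)\mid\widehat{\beta}_{S,k}]$) then yields $\lrmc_{S,k}\xrightarrow{\P}\E[f(Y)]$ for a.e.\ sequence.

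The almost-sure decay of the conditional MSE is where Theorem \ref{thm:conss} and Borel--Cantelli enter. Let $E_k$ denote the complement of the high-probability event on which the bound \eqref{sd} holds at budget $B_k$, so that $\P(E_k)\le m_k^{-2}$ for all large $k$. If the failure probabilities are summable, the first Borel--Cantelli lemma gives $\P(\limsup_k E_k)=0$, i.e.\ almost surely only finitely many $E_k$ occur. On this almost-sure event \eqref{sd} holds for all large $k$, whence
\[
\mse_{S,k}|_{\widehat{\beta}_{S,k}} \lesssim \frac{\beta_S^T\Sigma_S\beta_S}{N_k}+(s+1)\sigma_S^2\frac{\log m_k}{m_k}\longrightarrow 0,
\]
because $N_k\to\infty$ and $\log m_k/m_k\to 0$ as $m_k\to\infty$ (both guaranteed by $\min(m_k,N_k)\to\infty$). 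Combined with the reduction above, this closes the argument.

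The step that needs care, and the one I expect to be the main obstacle, is the summability $\sum_k \P(E_k)\le \sum_k m_k^{-2}<\infty$ required to invoke the first Borel--Cantelli lemma: the hypothesis only provides $m_k\to\infty$, which by itself does not guarantee summability. There are two clean ways around this. First, one may impose a mild growth condition (e.g.\ $m_k$ eventually nondecreasing with $m_k\gtrsim k^{1/2+\delta}$, which holds for essentially any practically relevant budget schedule), so that $\sum_k m_k^{-2}<\infty$ and the literal ``almost every sequence'' statement goes through verbatim. Alternatively, if one is content with unconditional convergence in probability over the joint (exploration $+$ exploitation) randomness, the summability step can be dropped entirely: bounding
\[
\P\!\left(|\lrmc_{S,k}-\E[f(Y)]|>\epsilon\right) \le \P(E_k) + \frac{1}{\epsilon^2}\,\E\!\left[\mathbf{1}_{E_k^c}\,\mse_{S,k}|_{\widehat{\beta}_{S,k}}\right]
\]
and using $\P(E_k)\to 0$ together with the deterministic bound \eqref{sd} valid on $E_k^c$ already shows the left-hand side tends to $0$. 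A minor additional point is measurability, namely fixing jointly measurable versions of $\widehat{\beta}_{S,k}$ and $\lrmc_{S,k}$ so the conditional statements are well posed, but this is routine.
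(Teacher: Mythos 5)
Your proof is correct and follows essentially the same route as the paper: the paper's entire proof is the remark that the corollary ``follows immediately from \eqref{sd} and an application of the Borel--Cantelli lemma,'' and your argument (Theorem \ref{thm:conss} giving almost-sure decay of $\mse_{S,k}|_{\widehat{\beta}_{S,k}}$ via Borel--Cantelli, then conditional Chebyshev to convert this into conditional convergence in probability) is exactly that sketch carried out in detail.

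On the summability issue you flag: your diagnosis is right that $m_k\to\infty$ alone does not give $\sum_k m_k^{-2}<\infty$, so Borel--Cantelli cannot be applied to failure events indexed by $k$. But there is a cleaner resolution than either of your fixes, and it is presumably the intended one. Under the natural reading of the setup, the exploration data form a single i.i.d.\ stream and $\widehat{\beta}_{S,k}$ is computed from its first $m_k$ samples. The failure event underlying \eqref{sd} depends only on those first $m$ exploration samples (the Hanson--Wright and covariance-concentration events in Appendix \ref{ap:1} involve only $Z_S$ and $\eta_S$; the exploitation count $N_S$ enters the bound deterministically, not through the event). So one should index the events by the exploration sample size $m\in\N$ rather than by $k$: then $\sum_{m\in\N}\P(E_m)\le\sum_{m\in\N}m^{-2}<\infty$ automatically, Borel--Cantelli gives that almost surely only finitely many $E_m$ occur, and since $m_k\to\infty$ the bound \eqref{sd} holds at $m=m_k$ for all large $k$ --- no growth condition on $k\mapsto m_k$ is needed, and the literal ``almost every sequence'' statement survives. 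Your growth-condition patch is the right one if instead the exploration data are redrawn independently at each budget, and your second alternative proves a strictly weaker (unconditional) statement than the one claimed.
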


Without a nonasymptotic convergence rate, a strong consistency result for the LRMC estimator can be established using existing results in \cite[Theorem 1]{Lai_1982}. In fact, a sufficient condition for the estimator $\widehat{\beta}_S$ (hence $\widehat{\sigma}_S^2$) to be strongly consistent is that the largest and smallest eigenvalues of $(Z^T_SZ_S)^{-1}$, $\lambda_{\max}$ and $\lambda_{\min}$, satisfy $\lambda_{\min}\to\infty$ and $\log\lambda_{\max}/\lambda_{\min}\to 0$ as $m\to\infty$ a.s., which is verifiable under the condition \eqref{...}.

\section{Algorithms}\label{alg}

In this section, we first analyze the asymptotic loss associated with uniform exploration policies.
Then, we propose an adaptive ETC (AETC) algorithm based on the estimated average MSEs, which selects the optimal exploration round and the best model to commit for a large budget almost surely.  
For convenience, we will ignore all integer rounding effects in the rest of the discussion. 

\subsection{Oracle loss of uniform exploration policies}
A uniform exploration policy spends the first $m$ rounds on exploration and then chooses a fixed model $S$ among the subsets of $[n]$ for exploitation. 
Fixing $B$ and for each $S$, the optimal exploration $m_S(B)$ balances the terms in expression \eqref{dep}.
The best uniform exploration policy is the one that stops exploring at time $m_S(B)$ and then picks model $S$ for exploitation, where model $S$ has the smallest average conditional MSE computed at the optimal exploration round $m_S(B)$. To develop computable expressions, we will be mainly concerned with the case when $B$ tends to infinity. 

To find the optimal $m$ for fixed $S$, note that for sufficiently large $m$, the law of large numbers tells us that almost surely from \eqref{dep}, 
\begin{align}
\overline{\mse}_S|_{\widehat{\beta}_S}& = \frac{1}{N_S}\left(\beta_S^T\Sigma_S\beta_S + \sigma_S^2\tr(\Sigma_S(Z_S^TZ_S)^{-1})\right) + \sigma_S^2\tr(x_Sx_S^T(Z_S^TZ_S)^{-1})\nonumber\\
&\simeq \frac{c_{\ext}(S)}{B-c_{\ex}m}\left(\beta_S^T\Sigma_S\beta_S + \frac{1}{m}\sigma_S^2\tr(\Sigma_S\Lambda_S^{-1})\right) + \frac{1}{m}\sigma_S^2\tr(x_Sx_S^T\Lambda_S^{-1})\nonumber\\
&\simeq \frac{\beta_S^T\Sigma_S\beta_S}{B-c_{\ex}m}c_{\ext}(S) + \frac{\sigma_S^2\tr(x_Sx_S^T\Lambda_S^{-1})}{m},\label{optm}
\end{align}
where $A_1(m)\simeq A_2(m)$ means that $A_1(m)/A_2(m)\to 1$ as $m\to\infty$ with probability 1. 
Denoting
\begin{align} 
&k_1(S) =c_{\ext}(S)\beta_S^T\Sigma_S\beta_S & k_2(S) = \sigma_S^2\tr(x_Sx_S^T\Lambda_S^{-1})\leq  (s+1)\sigma_S^2,\label{yiyi}
\end{align}
the asymptotically best $m$ for exploiting model $S$ can be found by minimizing \eqref{optm} :
\begin{align}
m_S = \argmin_{1<m<B/c_{\ex}}\frac{k_1(S)}{B-c_{\ex}m} + \frac{k_2(S)}{m} = \frac{B}{c_{\ex}+\sqrt{\frac{c_{\ex}k_1(S)}{k_2(S)}}},\label{mode}
\end{align}
where the latter equality can be computed explicitly since \eqref{optm} is a strictly convex function of $m$ in its domain. 
The $\overline{\mse}_S|_{\widehat{\beta}_S}$ corresponding to $m = m_S$ is  
\begin{align}
\overline{\mse}^*_S|_{\widehat{\beta}_S} &=  \frac{(\sqrt{k_1(S)}+\sqrt{c_{\ex}k_2(S)})^2}{B}\propto \left(\sqrt{k_1(S)}+\sqrt{c_{\ex}k_2(S)}\right)^2.\label{yuc}
\end{align}
The best model is the one that has the smallest $\overline{\mse}^*_S|_{\widehat{\beta}_S}$ under the optimal exploration round: 
\begin{align}
  S^* = \argmin_{S\subseteq  [n]} \overline{\mse}^*_S|_{\widehat{\beta}_S} = \argmin_{S\subseteq  [n]} \left(\sqrt{k_1(S)}+\sqrt{c_{\ex}k_2(S)}\right)^2,\label{>}
\end{align}
where the right-hand side is assumed to have a unique minimizer. 
The uniform exploration policy $\pi^*$ with exploration round count $m = m_{S^*}$ (more precisely $(\lfloor m_{S^*}\rfloor)$) and exploitation action $a_{\ext}(S^*)$ achieves the smallest asymptotic loss among all uniform exploration policies; such a policy is referred to as a \emph{perfect uniform exploration policy}. 
Note that determination of this optimal policy requires oracle information, thus cannot be directly applied in practice. We will provide a solution for this in the next section.

\begin{Rem}
For every $S\subseteq  [n]$, one can verify that the numerator in \eqref{yuc} is bounded by $2(k_1(S) + c_\ex k_2(S))$. On the other hand, the MC estimator using the high-fidelity samples alone has MSE of a similar form with numerator $c_0\V[f(Y)]$ (Section \ref{lastt}). 
Taking the quotient of the two quantities yields
\begin{align*}
\frac{2(k_1(S) + c_\ex k_2(S))}{c_0\V[f(Y)]} & \stackrel{\eqref{yiyi}}{\leq} \frac{2[c_{\ext}(S)\beta_S^T\Sigma_S\beta_S+c_\ex(s+1)\sigma_S^2]}{c_0\V[f(Y)]}\\
&\stackrel{\eqref{1}}{\leq} 2\left[\frac{c_\ext(S)}{c_0} + \frac{c_\ex(s+1)}{c_0}\frac{\sigma_S^2}{\V[f(Y)]}\right].
\end{align*}
This can be unconditionally bounded by $2(n+1)(n+2)$ for all $S\subseteq  [n]$, and can be significantly smaller than $1$ if both
\begin{align} 
&\frac{c_\ext(S)}{c_0}\ll 1 &\frac{\sigma_S^2}{\V[f(Y)]}\ll 1. \label{beck}
\end{align} 
This implies that the best uniform exploration policies are at most a constant (which only depends on $n$) worse than the classical MC, and can be significantly better if \eqref{beck} is satisfied for at least one $S$ (which is often the case in practice). 
\end{Rem}

\subsection{An adaptive ETC algorithm}\label{adpp}

Finding a uniform exploration policy is impossible without oracle access to $\beta_S, \Sigma_S, x_S, \sigma^2_S$ and $\Lambda^{-1}_S$.
These quantities, despite being unknown, can be estimated at any particular time $t$ in exploration ($t>s+1$): 
\begin{align}
\widehat{\beta}_S(t) & = Z_S^\dagger X_\ex|_Y \ (\text{using the first $t$ samples})&\widehat{x}_S(t) &= \frac{1}{t}\sum_{\ell\in [t]}X_{\ex, \ell}|_S \label{e1}\\
\widehat{\Sigma}_S(t) &= \frac{1}{t-1}\sum_{\ell\in [t]}(X_{\ex, \ell}|_S-\widehat{x}_S(t))(X_{\ex, \ell}|_S-\widehat{x}_S(t))^T\nonumber\\
\widehat{\sigma}^2_S(t)& = \frac{1}{t-s-1}\sum_{\ell\in [t]}\left(X_{\ex,\ell}|_Y -X_{\ex,\ell}|_S\widehat{\beta}_S \right)^2&\widehat{\Lambda}_S^{-1}(t) & = \left(\frac{1}{t}Z_S^TZ_S\right)^{-1}\nonumber
\end{align} 
Note that $\widehat{\sigma}^2_S(t)$ is an unbiased estimator for $\sigma_S^2$ when the noise is Gaussian. 
Plugging \eqref{e1} into \eqref{dep} and ignoring the higher order term $\sigma_S^2\tr(\Sigma_S(Z_S^TZ_S)^{-1})$
yields the sample estimator for $\overline{\mse}_S|_{\widehat{\beta}_S}$ with exploration round $m$ at time $t$:
\begin{align}
\widehat{\overline{\mse}}_S|_{\widehat{\beta}_S}(m;t) = \frac{c_{\ext}(S)\widehat{\beta}_S^T(t)\widehat{\Sigma}_S(t)\widehat{\beta}_S(t)}{B-c_{\ex}m} + \frac{\widehat{\sigma}_S^2(t)\tr(\widehat{x}_S(t)\widehat{x}_S^T(t)\widehat{\Lambda}_S^{-1}(t))}{m}.\label{001}
\end{align}
We call \eqref{001} the \emph{empirical average conditional MSE} of model $S$, which we subsequently call the empirical MSE.
Correspondingly, we define more empirical estimates of quantities that were previously defined in terms of oracle information:
\begin{align}
k_{1,t}(S) &=c_{\ext}(S)\widehat{\beta}^T_S(t)\widehat{\Sigma}_S(t)\widehat{\beta}_S(t),\ \ \ \ \ \ \ k_{2,t}(S) = \widehat{\sigma}_S^2(t)\tr(\widehat{x}_S(t)\widehat{x}_S^T(t)\widehat{\Lambda}_S^{-1}(t)),\label{0001}\\
m_S(t) &= \frac{B}{c_{\ex}+\sqrt{\frac{c_{\ex}k_{1,t}(S)}{k_{2,t}(S)}}},\ \ \ \ \ \ \ \overline{\mse}_S^*|_{\widehat{\beta}_S}(t) = \frac{(\sqrt{k_{1,t}(S)}+\sqrt{c_{\ex}k_{2,t}(S)})^2}{B}.\nonumber
\end{align}

To mimic the idea of a perfect uniform exploration policy, we start by collecting the smallest necessary number of exploration samples to make the above estimation possible, which requires $|S|+2$ exploration rounds. 
For $t\geq |S|+2$, we use the collected exploration data to compute \eqref{e1}, \eqref{001}, \eqref{0001}, from which we can estimate the best empirical MSE for each model $S$ .
At this point, if $m_S(t)>t$, then more exploration is needed for model $S$, and the optimal empirical MSE is approximately $\overline{\mse}_S^*|_{\widehat{\beta}_S}(t) = \widehat{\overline{\mse}}_S|_{\widehat{\beta}_S}(m_S(t); t)$.
Otherwise, the best trade-off point has passed and stopping immediately yields the minimal empirical MSE, which equals $\widehat{\overline{\mse}}_S|_{\widehat{\beta}_S}(t; t)$. 
An algorithm could use the estimated optimal expected MSEs to determine which model is currently most favorable for exploitation. But another exploration round should transpire if $m_S(t)$ for this favorable model is larger than the current step $t$.

During implementation, we use the above procedure once $t$ exceeds a relatively small number, i.e., the maximum number of regressors plus one. 
The estimation error of the second term in \eqref{001} is of constant order and cannot be ignored at the beginning. 
If it becomes pathologically close to zero during the initial stages of the algorithm, exploration may terminate after only a few rounds.
To combat this deficiency, we employ a common regularization in bandit learning:
For every $S$, we additively augment $k_{2,t}(S)$ with a small regularization parameter $\alpha_t$, which decays to $0$ as $t\to\infty$. 
For large $B$, adding $\alpha_t$ encourages the algorithm to explore at the beginning, but the encouragement becomes negligible as $\alpha_t\to 0$.  
Putting the ideas together yields the following adaptive ETC (AETC) algorithm for multifidelity approximation:
\medskip

\begin{algorithm}
\hspace*{\algorithmicindent} \textbf{Input}: $B$: total budget, $c_i$: cost parameters, $\alpha_t\downarrow 0$: regularization parameters \\
    \hspace*{\algorithmicindent} \textbf{Output}: $(\pi_t)_t$
 \begin{algorithmic}[1]
\STATE compute the maximum exploration round $M = \lfloor B/c_{\ex}\rfloor$
   \FOR{$t \in [n+2]$}{ 
 \STATE $\pi_t = a_{\ex}([n])$
 }
\ENDFOR
 \WHILE{$n+2\leq t\leq M$}
 \FOR{$S\subseteq  [n]$}{
 \STATE{compute $k_{1,t}(S), k_{2,t}(S)$ using \eqref{e1}, \eqref{0001}, and set $k_{2,t}(S) \gets k_{2,t}(S) +\alpha_t$}
 \STATE{compute $m_S(t)$ using \eqref{0001}}
 \STATE{compute the optimal expected MSE using \eqref{001}: $h_S(t) =\widehat{\overline{\mse}}_S|_{\widehat{\beta}_S}(m_S(t)\vee t; t)$}
}
 \ENDFOR
 \STATE {find the optimal model $S^*(t) = \argmin_{S\subseteq  [n]}h_S(t)$}
\IF{$m_{S^*(t)}(t)>t$}
        \STATE { $\pi_{t+1} = a_{\ex}([n])$ and $t \gets t + 1$}
    \ELSE
        \STATE { $\pi_t = a_{\ext}(S^*(t))$ and $t \gets M+1$}
     \ENDIF
 \ENDWHILE
 \end{algorithmic}
\caption{AETC algorithm for multifidelity approximation (single-valued case)} 
\label{alg:aETC}
\end{algorithm}

\begin{Rem}
Algorithm \ref{alg:aETC} is an adaptive version of the ETC algorithm (Algorithm \ref{alg:bETC}). To better comprehend Algorithm \ref{alg:aETC}, and in particular the policy actions $\pi$ that it implicitly defines, note that the first `for' loop (step 2-4) involves collecting a minimum number of samples to make the parameters that will be used estimable. The `while' loop (step 5-17) determines the exploration rate adaptively based on the estimated optimal mean-squared errors. (Exploration/exploitation rate refers to the number of samples used for exploration/exploitation in AETC.) In particular, the inner `for' loop (step 6-10) computes for each $S\subseteq  [n]$ the current expected optimal mean-squared error, and step 11 selects the best model based on the estimated values. The `if -- else' procedure (step 12-16) then decides if more exploration is needed based on the information of the selected best model. 
\end{Rem}

\subsection{AETC consistency and optimality}
Algorithm \ref{alg:aETC} ensures that both exploration and exploitation go to infinity as $B\to\infty$. (See Appendix \ref{ap:2}.)
As a consequence, almost surely, the model chosen by Algorithm \ref{alg:aETC} for exploitation converges to $S^*$ and the corresponding exploration is asymptotically optimal.   

\begin{Th}\label{thm:AETC}
Let $m(B)$ be the exploration round chosen by Algorithm \ref{alg:aETC} under budget $B$, and $S(B)$ be the model for exploitation, i.e., $S(B) = S^*(m(B))$.  
Suppose \eqref{...} holds uniformly for all $S\subseteq  [n]$, and $\lim_{t\to\infty}\alpha_t = 0$. 
Then, with probability $1$, 
\begin{subequations}
\begin{align}\label{rr1}
  \lim_{B\to\infty}\frac{m(B)}{m_{S^*}} &= 1, \\
  \lim_{B\to\infty}S(B) &= S^*,\label{rr2}
\end{align}
\end{subequations}
where $S^*$ and $m_{S^*}$ are the model choice and exploration round count given oracle information defined in \eqref{>} and \eqref{mode}, respectively.
\end{Th}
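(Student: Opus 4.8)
My plan is to establish the two convergence claims by showing that all the empirical quantities appearing in Algorithm \ref{alg:aETC} converge almost surely to their oracle counterparts, and then transferring this convergence through the (continuous) maps that define $m_S(t)$ and $S^*(t)$. The first step is to invoke the already-cited fact (Appendix \ref{ap:2}) that both the exploration round $m(B)$ and the exploitation sample count diverge to infinity as $B\to\infty$. This is the structural backbone: it guarantees that the number of samples $t$ used to form the estimators in \eqref{e1} grows without bound along the trajectory, so that strong laws of large numbers become available.

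Next I would prove that each empirical statistic in \eqref{e1} is strongly consistent: $\widehat{x}_S(t)\to x_S$, $\widehat{\Sigma}_S(t)\to\Sigma_S$, $\widehat{\Lambda}_S^{-1}(t)\to\Lambda_S^{-1}$, $\widehat{\beta}_S(t)\to\beta_S$, and $\widehat{\sigma}^2_S(t)\to\sigma^2_S$, all holding almost surely and simultaneously for the finitely many $S\subseteq[n]$. The convergence of $\widehat{x}_S,\widehat{\Sigma}_S,\widehat{\Lambda}_S^{-1}$ follows from the SLLN under the moment condition \eqref{...}; the consistency of $\widehat{\beta}_S$ and $\widehat{\sigma}^2_S$ is exactly what the strong-consistency argument at the end of Section \ref{ttt} provides (via \cite[Theorem 1]{Lai_1982}, using that the eigenvalue condition on $(Z_S^TZ_S)^{-1}$ holds under \eqref{...}). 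Composing these with the continuous algebraic expressions in \eqref{0001} gives $k_{1,t}(S)\to k_1(S)$ and, crucially using $\alpha_t\to 0$, $k_{2,t}(S)+\alpha_t\to k_2(S)$, almost surely for every $S$. Because $k_1(S),k_2(S)>0$, the map $(k_1,k_2)\mapsto (\sqrt{k_1}+\sqrt{c_\ex k_2})^2$ is continuous, so $\overline{\mse}^*_S|_{\widehat\beta_S}(t)\to\overline{\mse}^*_S|_{\widehat\beta_S}$ in the sense of \eqref{yuc}.

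With all these convergences in hand, \eqref{rr2} follows from the uniqueness of the minimizer in \eqref{>}: since $S^*$ is the \emph{strict} argmin over a finite set and $h_S(t)$ converges to the oracle value $\propto(\sqrt{k_1(S)}+\sqrt{c_\ex k_2(S)})^2$ for each $S$, there is almost surely a (random) finite time after which $\argmin_S h_S(t)=S^*$, forcing $S(B)=S^*$ for all large $B$. For \eqref{rr1} I would note that $m_S(t)$ in \eqref{0001} is the same explicit function of $(k_{1,t},k_{2,t})$ that defines $m_S$ in \eqref{mode} as a function of $(k_1,k_2)$; by continuity $m_{S^*}(t)/m_{S^*}\to 1$, and since the algorithm's stopping rule releases exploration precisely when $m_{S^*(t)}(t)\le t$ (with $t=m(B)$ at termination), one pins $m(B)$ to within a vanishing relative error of $m_{S^*}$. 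The subtle point requiring care here is that $t$ is itself the realized exploration count, so the convergence $m_{S^*(t)}(t)/m_{S^*}\to1$ and the termination identity $m(B)=m_{S^*(m(B))}(m(B))$ must be reconciled self-consistently.

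The main obstacle, I expect, is precisely this self-referential stopping analysis for \eqref{rr1}: unlike a fixed-design estimator, the exploration count $m(B)$ is a data-dependent stopping time, so one cannot simply evaluate the limit of $m_S(t)$ along a deterministic schedule. I would address it by a squeeze argument—showing that for any $\delta>0$ the algorithm neither stops before $(1-\delta)m_{S^*}$ nor persists past $(1+\delta)m_{S^*}$ for large $B$—using the uniform (in $t$ over the relevant growing window) convergence of the empirical MSE curves together with the strict convexity of \eqref{optm} that makes the trade-off point well-separated. Establishing that the convergence of $h_S(\cdot)$ is uniform on the window $t\in[(1-\delta)m_{S^*},(1+\delta)m_{S^*}]$, rather than merely pointwise, is the technical crux, and I would obtain it from the nonasymptotic rate in Theorem \ref{thm:conss} combined with a union bound over the finitely many models.
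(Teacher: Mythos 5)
Your proposal has a genuine gap at its foundation: you \emph{invoke} the divergence of the exploration round, $m(B)\to\infty$, as an ``already-cited fact'' from Appendix \ref{ap:2}, but Appendix \ref{ap:2} \emph{is} the proof of Theorem \ref{thm:AETC} itself, and establishing that divergence is precisely its first and most delicate step. In a blind proof this cannot be assumed: without it, the exploration stopping time could remain bounded along a set of trajectories of positive probability (e.g., if the early-round estimate of $k_{2,t}(S)$ in \eqref{0001} is pathologically close to zero, the estimated optimal round $m_S(t)$ collapses below $t$ and the algorithm commits after only a few samples), in which case none of your strong laws of large numbers ``along the trajectory'' ever activate and the whole argument is vacuous. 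The paper closes this hole with an argument you never sketch: first, strong consistency of $\widehat{\beta}_S(t)$ and $\widehat{\Sigma}_S(t)$ along the \emph{full} (hypothetically infinite) exploration sequence gives the uniform-in-$t$ bound \eqref{simkey} on the empirical $k_{1,t}(S)$; second --- and this is the step your proposal structurally cannot reproduce --- the regularization enters as a \emph{lower} bound $k_{2,t}(S)\geq \alpha_t$, so the stopping criterion forces $m(B)\geq B/(c_{\ex}+\sqrt{c_{\ex}L(\omega)/\alpha_{m(B)}})$, and a contradiction argument (if $m(B)$ stayed bounded, $\alpha_{m(B)}$ would stay bounded below, forcing the right-hand side to diverge) yields $m(B)\to\infty$. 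Your proposal uses $\alpha_t$ only through $\alpha_t\to 0$ (so that $k_{2,t}+\alpha_t\to k_2$), which misses the entire reason the regularization exists; indeed, the logical order in your write-up (divergence $\Rightarrow$ SLLN applies) is the reverse of the correct one (consistency along the full sequence $\Rightarrow$ boundedness of $k_{1,t}$ $\Rightarrow$, via $\alpha_t$, divergence).

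The remainder of your plan --- strong consistency of the estimators in \eqref{e1} via the SLLN and \cite[Theorem 1]{Lai_1982}, transfer through the explicit formulas \eqref{0001}, model identification from the uniqueness of the minimizer in \eqref{>}, and a squeeze argument pinning $m(B)$ between $(1\pm\delta)m_{S^*}$ --- does match the paper's route for the latter parts of the proof. Two secondary corrections there: the claim that $h_S(t)$ converges to the oracle optimum is imprecise for suboptimal $S$ (once $t>m_S(t)$ the algorithm evaluates $\widehat{\overline{\mse}}_S(t;t)$, which exceeds the optimum; the inequality still points the right way, which is what the paper's condition \eqref{l1} encodes by bounding competitors from below by their minima over all $m$). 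And your proposed ``technical crux'' --- obtaining uniform convergence of the MSE curves from Theorem \ref{thm:conss} plus a union bound --- is misdirected: Theorem \ref{thm:conss} bounds the conditional MSE of the LRMC estimator, not the estimation error of $k_{1,t},k_{2,t}$. The needed uniformity is actually free: each empirical curve is the explicit rational function $k_{1,t}(S)/(B-c_{\ex}m)+k_{2,t}(S)/m$ of $m$, so almost-sure convergence of the finitely many scalars $k_{1,t}(S),k_{2,t}(S)$ already gives uniform control over the relevant $m$-windows, which is exactly how the paper obtains \eqref{l1}--\eqref{l2}.
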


\begin{proof}
See Appendix \ref{ap:2}. 
\end{proof}

Theorem~\ref{thm:AETC} concludes that as the budget goes to infinity, both the exploration round count and the chosen exploitation model will converge to the optimal ones given by the perfect uniform exploration policies for almost every realization. However, this does not imply that the loss of the corresponding policy $\pi$ asymptotically matches the loss of the perfect uniform exploration policy. Indeed, the formula defining the loss in \eqref{dep} assumes $m$ is deterministic and cannot be applied when $m=m(B)$ is chosen in an adaptive (and random) manner. 

Selecting $\alpha_t$ is often considered an art in bandit learning, and for us is mostly used for theoretical analysis. 
In practice, we observe that it is sufficient to choose $\alpha_t$ as a sequence with exponential decay in $t$.

\begin{Rem}\label{rrrm}
  Let $N(B)$ denote the number of affordable samples for exploiting under budget $B$. 
\eqref{rr1} implies that both $m(B)$ and $N(B)$ diverge as $B$ goes to infinity.  
In addition, one can see from the proof that $S^*(t) = S^*$ for sufficiently large $t$ almost surely. 
These combined with the strong law of large numbers imply that the estimators produced by Algorithm \ref{alg:aETC} are almost surely consistent.   
\end{Rem}

\section{Vector-valued responses and efficient estimation}\label{sec:vec}

\subsection{Vector-valued high-fidelity models}
We now generalize the results in the previous section to the case of vector-valued responses, i.e., $k_0>1$.  
Let
\begin{align*}
&f(Y) = (f^{(1)}(Y), \cdots, f^{(k_0)}(Y))^T\in\R^{k_0}
\end{align*}
denote the response vector. 
The linear model assumption in \eqref{1} is modified as follows:
\begin{align}
&f(Y) = \beta_SX_S + \e_S, \label{newass}
\end{align}
where $\e_S = (\e^{(1)}_{S}, \cdots, \e^{(k_0)}_{S})^T\sim (0, \Gamma_S)$
is a centered multivariate sub-Gaussian distribution with covariance matrix $\Gamma_S$, and 
$\beta_S = (\beta^{(1)}_{S}, \cdots, \beta^{(k_0)}_{S})^T \in \R^{k_0 \times (s+1)}$ 
is the coefficient matrix for model $S$. 
For fixed $S$, one may use \eqref{lscoef} to estimate the coefficients $\beta_S^{(j)}$ by $\widehat{\beta}_S^{(j)}$ for $j\in [k_0]$, and the corresponding LRMC estimators can be built similarly as in \eqref{LRMC}.  
To generalize the MSE, we consider a common class of quadratic risk functionals to measure the quality of the LRMC estimators. 
Let $Q\in\R^{q\times l}$, and let $\widehat{\theta}$ be an estimator for $\theta\in\R^{l}$ where $q$ is arbitrary. 
The \emph{$Q$-risk} of $\widehat{\theta}$ is defined as 
\begin{align}
\risk(\widehat{\theta}) = \E\left[\left\|Q(\widehat{\theta}-\theta)\right\|_2^2\right] = \E\left[(\widehat{\theta}-\theta)^TQ^TQ(\widehat{\theta}-\theta)\right],
\end{align}
where we suppress the notational dependence of $\risk$ on $Q$. We use the following conditional $Q$-risk on the estimated coefficient matrix $\widehat{\beta}_S$ in place of \eqref{def:mse}:

\begin{align}
\risk_S|_{\widehat{\beta}_S}: = \risk(\lrmc_S)|_{\widehat{\beta}_S} = \E\left[\left\|Q(\lrmc_S-\E[f(Y)])\right\|_2^2|\widehat{\beta}_S\right]\label{newreg}
\end{align}
where above $Q \in \R^{q \times k_0}$ for arbitrary $q$.
A similar result as Theorem \ref{thm:conss} can be obtained for the LRMC estimator under \eqref{newreg}:

\begin{Th}\label{thm:consv}
Under the same conditions as in Theorem \ref{thm:conss} and assumption \eqref{newass},
the following result holds: For large $m$, with probability at least $1-m^{-2}$,
\begin{align}
\risk_S|_{\widehat{\beta}_S} \lesssim \frac{1}{N_S}\tr(\Sigma_S\beta_S^TQ^TQ\beta_S) + (s+1)\tr(Q\Gamma_SQ^T)\frac{\log m}{m},\label{sddd}
\end{align}
where the implicit constant in $\lesssim$ is independent of $m$.  
\end{Th}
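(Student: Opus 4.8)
The plan is to mirror the scalar argument behind Theorem \ref{thm:conss}, upgraded to the matrix setting through a bias--variance decomposition of the conditional $Q$-risk followed by a single joint concentration step for the noise. First I would record the analogue of \eqref{ana}. Writing the vector LRMC as $\lrmc_S = \frac{1}{N_S}\sum_{\ell\in[N_S]}\widehat{\beta}_S X_{S,\ell}$ and noting that $\E[\lrmc_S\,|\,\widehat{\beta}_S] = \widehat{\beta}_S x_S$ while $\E[f(Y)] = \beta_S x_S$, the $\widehat{\beta}_S$-conditional bias--variance split gives
\begin{align*}
\risk_S|_{\widehat{\beta}_S} = \left\|Q(\widehat{\beta}_S-\beta_S)x_S\right\|_2^2 + \frac{1}{N_S}\tr\left(Q\widehat{\beta}_S\Sigma_S\widehat{\beta}_S^TQ^T\right),
\end{align*}
where the second term is the $Q$-image of the conditional covariance $\frac{1}{N_S}\widehat{\beta}_S\Sigma_S\widehat{\beta}_S^T$ of the vector estimator. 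This reduces the theorem to controlling the two terms separately on a single high-probability event governed by the design matrix $Z_S$.

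The variance term is the easy one. On the same design-concentration event used for Theorem \ref{thm:conss}, namely $\{\tfrac1m Z_S^TZ_S \approx \Lambda_S\}$, one has $\widehat{\beta}_S\to\beta_S$, so $\tr(Q\widehat{\beta}_S\Sigma_S\widehat{\beta}_S^TQ^T)$ is bounded (up to a factor $1+o(1)$) by $\tr(Q\beta_S\Sigma_S\beta_S^TQ^T) = \tr(\Sigma_S\beta_S^TQ^TQ\beta_S)$ via cyclicity of the trace, which supplies the first term of \eqref{sddd} after dividing by $N_S$. The substantive work is the bias term. Setting the design-determined weight $w := Z_S(Z_S^TZ_S)^{-1}x_S\in\R^m$ and stacking the exploration noise into $E\in\R^{m\times k_0}$ (row $\ell$ equal to $\e_{S,\ell}^T$), the rowwise least-squares identity $\widehat{\beta}_S-\beta_S = E^T Z_S(Z_S^TZ_S)^{-1}$ yields $Q(\widehat{\beta}_S-\beta_S)x_S = QE^Tw$. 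On the design event, $\|w\|_2^2 = x_S^T(Z_S^TZ_S)^{-1}x_S = \tr(x_Sx_S^T(Z_S^TZ_S)^{-1}) \lesssim (s+1)/m$, exactly paralleling the bound $k_2(S)\le(s+1)\sigma_S^2$ in \eqref{yiyi}.

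The key step is then a single joint concentration for the noise. Conditionally on $Z_S$, the vector $E^Tw = \sum_{\ell}w_\ell\,\e_{S,\ell}$ is a sum of independent, centered, sub-Gaussian vectors with covariance $\|w\|_2^2\Gamma_S$, so $QE^Tw$ is a centered sub-Gaussian vector in $\R^q$ with covariance $\|w\|_2^2\,Q\Gamma_SQ^T$ and $\E[\|QE^Tw\|_2^2\,|\,Z_S] = \|w\|_2^2\tr(Q\Gamma_SQ^T)$. A Hanson--Wright-type bound for quadratic forms of sub-Gaussian vectors then shows that $\|QE^Tw\|_2^2 \lesssim \|w\|_2^2\tr(Q\Gamma_SQ^T)\log m$ off an event of conditional probability at most $\tfrac12 m^{-2}$; combined with $\|w\|_2^2\lesssim(s+1)/m$ this gives the second term of \eqref{sddd}. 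Intersecting with the design event (probability $\ge 1-\tfrac12 m^{-2}$) produces the stated probability $1-m^{-2}$.

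The main obstacle I anticipate is precisely this noise step: handling the whole matrix $E$ at once rather than projecting $Q$ onto its $q$ rows (or the eigenvectors of $Q^TQ$) and union bounding over the resulting scalar problems, which would inflate the failure probability by a dimensional factor and destroy the clean $1-m^{-2}$ guarantee. Treating $QE^Tw$ as one sub-Gaussian vector and concentrating its squared norm around its mean with only a multiplicative $\log m$ loss is what keeps the implicit constant in \eqref{sddd} independent of $m$ and the probability sharp; the remaining estimates—design concentration, $\|w\|_2^2\lesssim(s+1)/m$, and the variance-term limit—are exactly those already established for Theorem \ref{thm:conss}.
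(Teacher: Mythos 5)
Your architecture is essentially the paper's: the same bias--variance split of the conditional $Q$-risk (the first line of \eqref{002}), the identity $\widehat{\beta}_S-\beta_S = E^TZ_S(Z_S^TZ_S)^{-1}$, the design-concentration event inherited from Theorem \ref{thm:conss}, and a Hanson--Wright bound for the resulting noise quadratic forms. Your handling of the \emph{bias} term is a legitimate variant and in one respect cleaner: the paper factors $\Gamma_S = P_SP_S^T$, diagonalizes $P_S^TQ^TQP_S$, and reduces to $r_S$ independent Gaussian vectors $g_i$, bounding $\|B_Sg_i\|_2^2$ for each $i\in[r_S]$ as in the scalar proof (see \eqref{hap}--\eqref{xuemama}), which strictly speaking needs a union bound over $r_S$ events; your single application of Hanson--Wright to $QE^Tw$, whose conditional mean is $\|w\|_2^2\tr(Q\Gamma_SQ^T)$, sidesteps that dimensional factor, exactly as you intended. (Like the paper, you should restrict to Gaussian noise, or be explicit about what ``multivariate sub-Gaussian'' means, for the Hanson--Wright step to apply verbatim.)

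There is, however, a genuine gap in your \emph{variance} term. You assert that on the design event $\{\tfrac1m Z_S^TZ_S\approx\Lambda_S\}$ ``one has $\widehat{\beta}_S\to\beta_S$,'' and on that basis replace $\tr(Q\widehat{\beta}_S\Sigma_S\widehat{\beta}_S^TQ^T)$ by $\tr(\Sigma_S\beta_S^TQ^TQ\beta_S)$ up to $1+o(1)$. This does not follow: the design event constrains only $Z_S$, while $\widehat{\beta}_S-\beta_S = E^TZ_S(Z_S^TZ_S)^{-1}$ also depends on the noise matrix $E$, which can be arbitrarily large on that event. What is actually needed is a concentration bound for $\tr\bigl(\Sigma_S(\widehat{\beta}_S-\beta_S)^TQ^TQ(\widehat{\beta}_S-\beta_S)\bigr)$, i.e.\ for the action of $E$ on the whole column space of $Z_S(Z_S^TZ_S)^{-1}$, and this is precisely the object your single-vector device $w=Z_S(Z_S^TZ_S)^{-1}x_S$ does not cover. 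The paper controls it (after the triangle-type inequality in \eqref{002}) by $\|\Sigma_S\|_2\sum_{i=1}^{r_S}\lambda_i\|Z_S^\dagger g_i\|_2^2 \lesssim \|\Sigma_S\|_2\tr(Q\Gamma_SQ^T)\tfrac{\log m}{m}$, which is then absorbed as lower order into \eqref{sddd}. Within your framework the fix is routine and uses only tools you already invoked: write
\begin{align*}
\tr\bigl(\Sigma_S(\widehat{\beta}_S-\beta_S)^TQ^TQ(\widehat{\beta}_S-\beta_S)\bigr)
= \bigl\|QE^TZ_S(Z_S^TZ_S)^{-1}\Sigma_S^{1/2}\bigr\|_F^2
= \sum_{j\in[s+1]}\|QE^Tv_j\|_2^2,
\end{align*}
where $v_j$ are the columns of $Z_S(Z_S^TZ_S)^{-1}\Sigma_S^{1/2}$, apply your Hanson--Wright step to each column, and use $\sum_j\|v_j\|_2^2 = \tr(\Sigma_S(Z_S^TZ_S)^{-1}) \lesssim \|\Sigma_S\|_2(s+1)/m$ on the design event; the union bound is now over $s+1$ events, a number independent of $m$ and of $k_0$, so the $1-m^{-2}$ guarantee survives after adjusting constants. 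As written, though, the variance step rests on an unjustified consistency claim, so the proof is incomplete without this (or the paper's) additional argument.
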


Other results in the previous section (including Algorithm \ref{alg:aETC} and Theorem \ref{thm:AETC}) can also be generalized to the vector-valued case;
since the ideas are similar, we do not restate them here. 
An extended discussion of the results and the proof of Theorem \ref{thm:consv} can be found in Appendix \ref{ap:3}.
The corresponding algorithm is Algorithm \ref{alg:aETC_mul}.


\subsection{Efficient estimation}
Estimation for vector-valued responses is more challenging due to the presence of high-dimensional parameters.  
Nevertheless, we are only interested in some functionals of the high-dimensional parameters, i.e., 
$\tr(Q\Gamma_SQ^T)$ for instance instead of $\Gamma_S$ itself. For these functionals, the plug-in estimators are relatively accurate for $t\gtrsim s+1$. 
See Appendix \ref{ap:3} for a discussion that theoretically establishes this point.


\section{Numerical simulations}\label{num}

In this section we demonstrate performance of the AETC algorithms (Algorithms \ref{alg:aETC} and \ref{alg:aETC_mul}). 
The regularization parameters $\alpha_t$ are set as $\alpha_t = 4^{-t}$ in all simulations, except in one case (first plot in Figure \ref{fig:more2}) where we investigate how the accuracy of AETC depends on $\alpha_t$.
We will utilize four methods to solve multifidelity problems: 
\begin{itemize}[leftmargin=2.0cm]
  \item[(MC)] Classical Monte Carlo, where the entire budget is expended over the high-fidelity model.
  \item[(MFMC)] The multifidelity Monte Carlo procedure from \cite{Peherstorfer_2016}, which is provided with oracle correlation information in Table \ref{145} and Table \ref{146}.
  \item[(AETC)] Algorithm \ref{alg:aETC} (for scalar responses) or \ref{alg:aETC_mul} (for vector responses).
  \item[(AETC-re)] Algorithm \ref{alg:aETC} or \ref{alg:aETC_mul}, but we recycle samples from exploration when computing exploitation estimates. See Remark \ref{rem:recycle}.
\end{itemize}
To evaluate results, we compute and report an empirical mean-squared error over 500 samples. Since both the AETC and AETC-re produce random estimators due to the exploration step, the experiment (including both exploration and exploitation) is repeated $200$ times with the $0.05$-$0.50$-$0.95$-quantiles recorded.

\subsection{Multifidelity finite element approximation for parametric PDEs}\label{ppde}
In the first example, we investigate the performance of our approach on multifidelity parametric solutions of linear elastic structures. We consider two scenarios associated with different geometries of the domain, namely \emph{square} and \emph{L-shape} structures. The geometry, boundary conditions, and loading in these structures are shown in Figure~\ref{fig:struct}.
\begin{figure}[htbp]
\begin{center}
\includegraphics[width=0.45\textwidth]{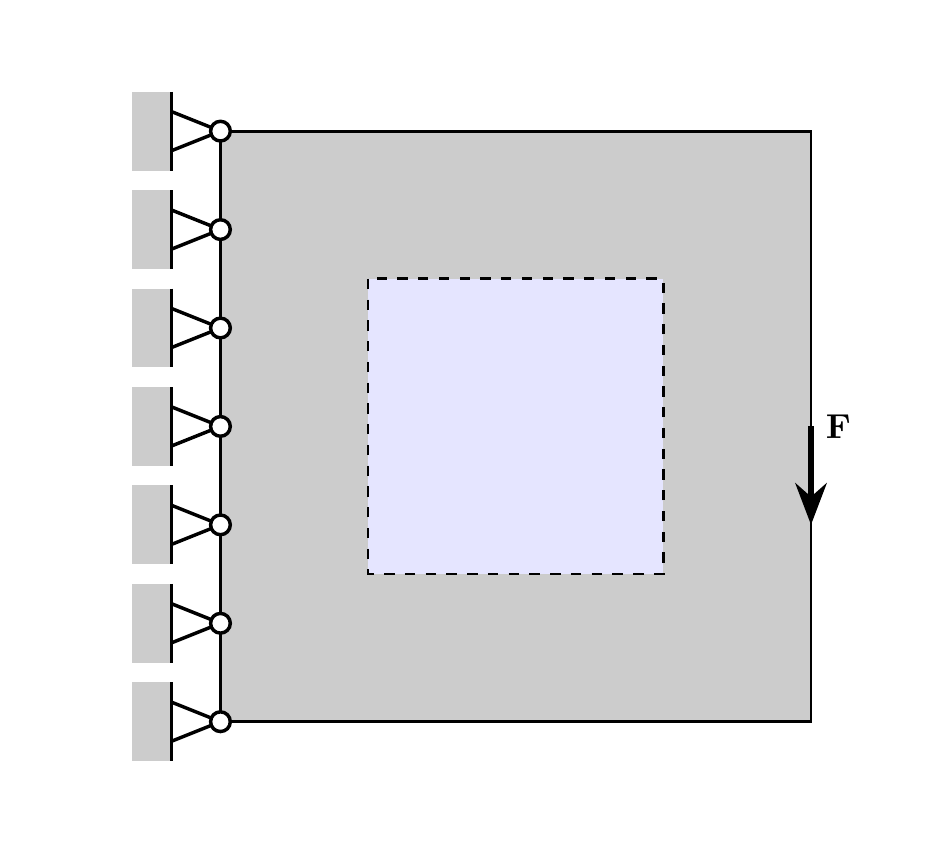}\hspace{1cm}
\includegraphics[width=0.45\textwidth]{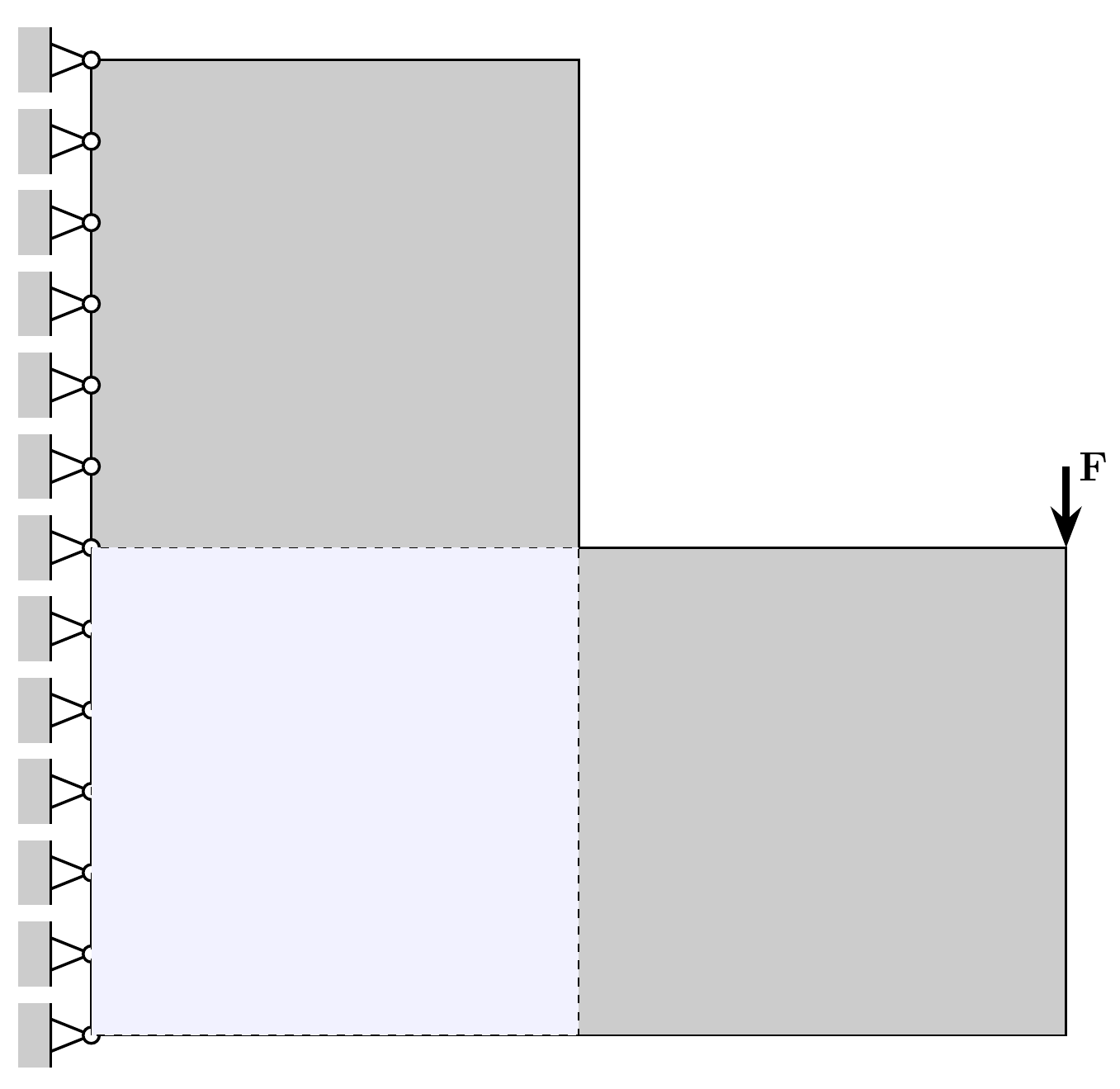}
\caption{\small Geometry, boundary conditions, and loading for two linear elastic structures, the square and the L-shape. The regions in which the vector-valued quantities are investigated are highlighted with the light blue color.}\label{fig:struct}
  \end{center}
\end{figure}

The model is an elliptic PDE that governs motion in linear elasticity. In a parametric setting, we introduce the parametric/stochastic version of this governing equation. 
In a bounded spatial domain $D$ with boundary $\partial D$, the parametric elliptic PDE is 
\begin{equation}\label{elliptic_pde_stochastic}
     \begin{cases}
        \nabla \cdot (E(\bm p,\bm x) \nabla u(\bm p,\bm x) ) = f(\bm x) & \forall (\bm p,\bm x) \in \mathcal{P} \times D \\
        u(\bm p,\bm x)  = 0  & \forall (\bm p,\bm x) \in \mathcal{P} \times \partial D\\
        \end{cases}
\end{equation}
where $E$, the elasticity matrix dependent on state variables $\bm x$, is parameterized with the random variables $\bm p$, and $f$ is the forcing function. We assume that $\bm p \in \mathcal{P}$ is a $d$-dimensional random variable with independent components $\{p_i\}_{i=1}^d$.
The solution of this parametric PDE is the displacement $u \equiv u(\bm p, \bm x):\mathcal{P} \times D \rightarrow \R$. The displacement is used to compute the scalar quantity of interest, the \emph{compliance}, which is the measure of elastic energy absorbed in the structure as a result of loading,
\begin{align}
  \C \coloneqq \int_D \nabla u(\bm p, \bm x)^T E(\bm p, \bm x) \nabla u(\bm p, \bm x) d \bm x
\end{align}
The linear elastic structure is subjected to plane stress conditions and the uncertainty is considered in the material properties, namely the elastic modulus, which manifests in the model through the random parameters $\bm p$. 
To model the uncertainty, we consider a random field for the elastic modulus via a Karhunen-Lo\'{e}ve (KL) expansion:
\begin{equation}\label{KL_numerical}
E(\bm p,\bm x) = E_0(\bm x) + \delta \left [ \sum_{i=1}^{d} \sqrt{\lambda_i} E_i(\bm x) p_i \right ]
\end{equation}
where $\delta=0.5, E_0(\bm x)=1$ are constants, and the random variables $p_i$ are uniformly distributed on $[-1,1]$ and the eigenvalues $\lambda_i$ and basis functions $E_i$ are taken from the analytical expressions for the eigenpairs of an exponential kernel on $D=[0,1]^2$. The Poisson's ratio is $\nu=0.3$, and we initially use $d = 4$ parameters. We solve the partial differential equation \eqref{elliptic_pde_stochastic} for each fixed $\bm p$ via the finite element method with standard bilinear square isotropic finite elements on a rectangular mesh. 

In this example, we form a multifidelity hierarchy through mesh coarsening: Consider $n=7$ mesh resolutions with mesh sizes $h = \{1/(2^{8-L})\}_{L=1}^{7}$ where $L$ denotes the level. The mesh associated with $L=1$ yields the most accurate model (highest fidelity), which is taken as the high-fidelity model in our experiments. To utilize the notation presented earlier in this article, our potential low-fidelity regressors is formed from the compliance computed form various discretizations,
\begin{align*}
  X^{(i)} &= \C^{(i)}\in\R, & i&\in [6],
\end{align*} 
where $\C^{(i)}$ is the compliance of the solution computed using a solver with mesh size $h = 2^{i-7}$.
We provide more details about the discretization and the uncertainty model of this section in Appendix \ref{app:num-setup-1}.

The cost for each model is the computation time, which we take to be inversely proportional to the mesh size squared, i.e., $h^2$. (This corresponds to employing a linear solver of optimal linear complexity.)
We normalize cost so that the model with the lowest fidelity has unit cost. 
The total budget $B$ ranges from $10^5$ to $4\times 10^5$, and our simulations increment the budget over this range by $0.5\times 10^5$ units.
The budget range considered here is sufficient to communicate relevant results.
We refer to \cite{xu2021budget} for a further study of the square domain over a larger budget range of $10^5$ to $10^7$.

\subsubsection{Scalar high-fidelity output}\label{revision1}
In the first experiment, the response variable is chosen as the compliance of the high-fidelity model, i.e., 
\begin{align*}
f(Y) = \C^{(0)}\in\R.
\end{align*} 
Therefore, for AETC we use Algorithm \ref{alg:aETC}. 
The oracle statistics of $f(Y)$ and $X^{(i)}$ are computed over $50000$ independent samples, and shown in illustrated in Table \ref{145}.

\medskip
\begin{table}[htbp]
\begin{center}
\small
\begin{tabular}{l*{6}{c}r}
Models              & $f(Y)$ &$X^{(1)}$ & $X^{(2)}$ & $X^{(3)}$ & $X^{(4)}$& $X^{(5)}$  & $X^{(6)}$ \\
\hline
$\text{Corr}(\cdot, f(Y))$ & 1 & 0.998 & 0.992 & 0.976 & 0.940 & 0.841 & -0.146\\
Mean            & 9.641 & 9.197 & 8.749 & 8.287 &  7.782 & 7.141 & 6.160\\
Standard deviation            & 0.127 & 0.113 & 0.099 & 0.086 &  0.072 & 0.052 & 0.027\\
Cost            & 4096& 1024 & 256 & 64 & 16 &  4 & 1 \\
\end{tabular} 
\bigskip

\begin{tabular}{l*{6}{c}r}
Models              & $f(Y)$ &$X^{(1)}$ & $X^{(2)}$ & $X^{(3)}$ & $X^{(4)}$& $X^{(5)}$  & $X^{(6)}$ \\
\hline
$\text{Corr}(\cdot, f(Y))$ & 1 & 0.999 & 0.995 & 0.980 & 0.932 & 0.733 & -0.344\\
Mean            & 25.940 & 24.256 & 22.902 & 21.180 &  19.054 & 16.072 & 12.275\\
Standard deviation            & 0.290 & 0.242 & 0.195 & 0.149 &  0.104 & 0.061 & 0.070\\
Cost            & 4096& 1024 & 256 & 64 & 16 &  4 & 1 \\
\end{tabular} 
\caption{\small Oracle information of $f(Y)$ and $X^{(i)}$ (approximated with $3$ digits) in the case of the square domain (\textbf{top}) and the L-shape domain (\textbf{bottom}).}\label{145}
\end{center}
\end{table}

In both cases, the expensive models are more correlated with the high-fidelity model than the cheaper ones. 
Such hierarchical structure is often a necessary assumption for the implementation of multifidelity methods, the MFMC, for instance. 

Accuracy results for various multifidelity procedures are shown in the first two plots in Figure \ref{fig:1}. As the budget goes to infinity, the model selected by AETC converges to the minimizer given by \eqref{>}, which can be explicitly computed using oracle information. 
Particularly, solutions to \eqref{>} in the case of the square domain and the L-shape domain are respectively as follows:
\begin{itemize}
\item Square domain: $f(Y)\sim X^{(4)}+X^{(5)}+X^{(6)}+\text{intercept}$ 
\item L-shape domain: $f(Y)\sim X^{(3)}+X^{(4)}+X^{(5)}+X^{(6)}+\text{intercept}$. 
\end{itemize}
The regression coefficients for the square domain and the L-shape domain are  6.151 ($X^{(4)}$), -6.509 ($X^{(5)}$), 1.444 ($X^{(6)}$), -0.640 (intercept) and 5.537 ($X^{(3)}$), -6.606 ($X^{(4)}$), 2.559 ($X^{(5)}$), -0.437 ($X^{(6)}$), -1.210 (intercept), respectively.
Note that there is a canceling effect (change in sign of coefficients) for the more expensive regressors due to high correlations.
Neither limiting model above will be used by the MFMC estimator as the relationship between cost and correlation does not satisfy the assumption \cite[condition (20)]{Peherstorfer_2016}. For every budget level $B$ under our test, we compute the empirical probability (as a percentage) that the limiting model is selected by AETC and plot this in the rightmost panel in Figure \ref{fig:1}. 
\begin{figure}[htbp]
\begin{center}
\includegraphics[width=0.32\textwidth]{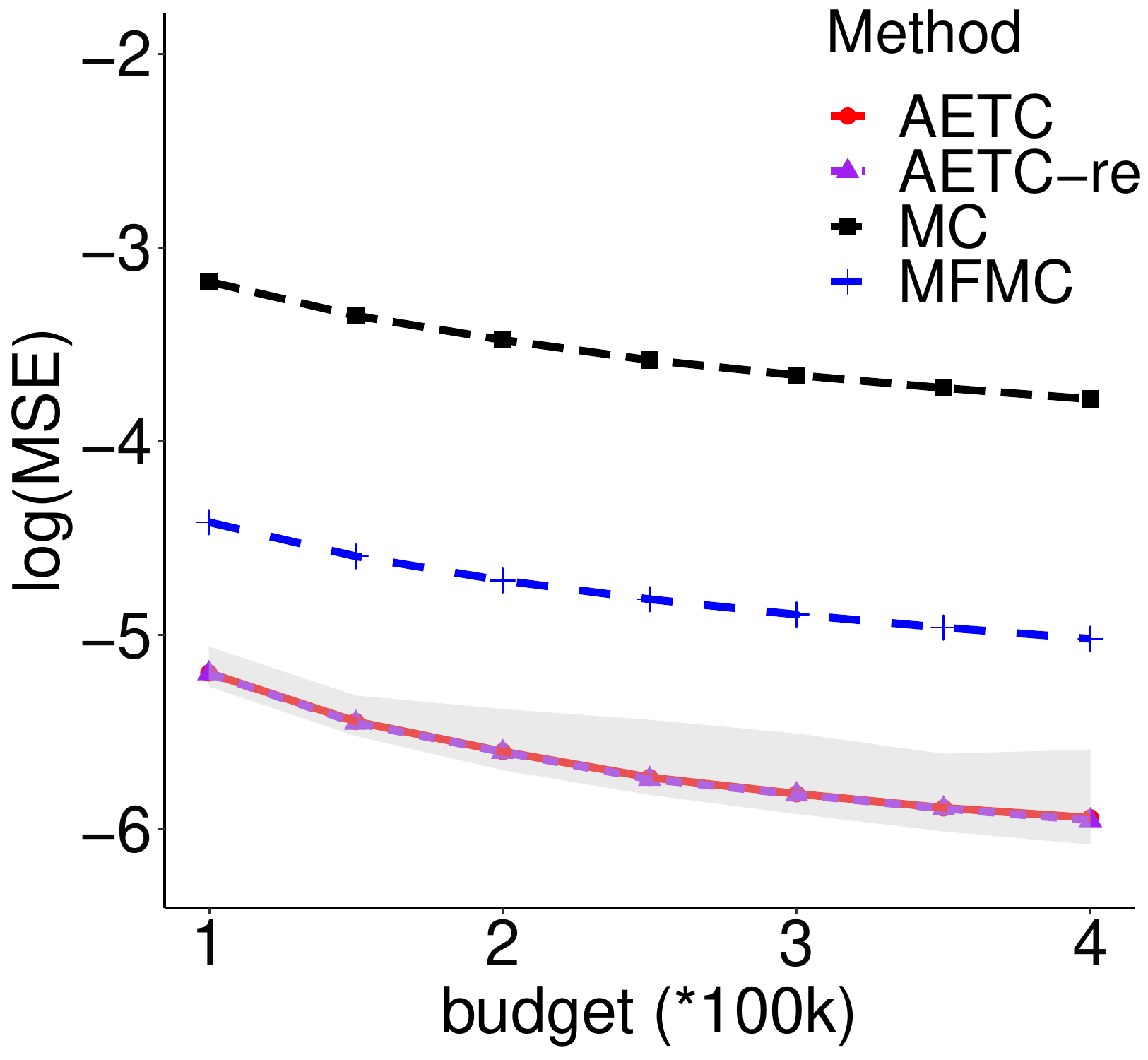}\ \ 
\includegraphics[width=0.32\textwidth]{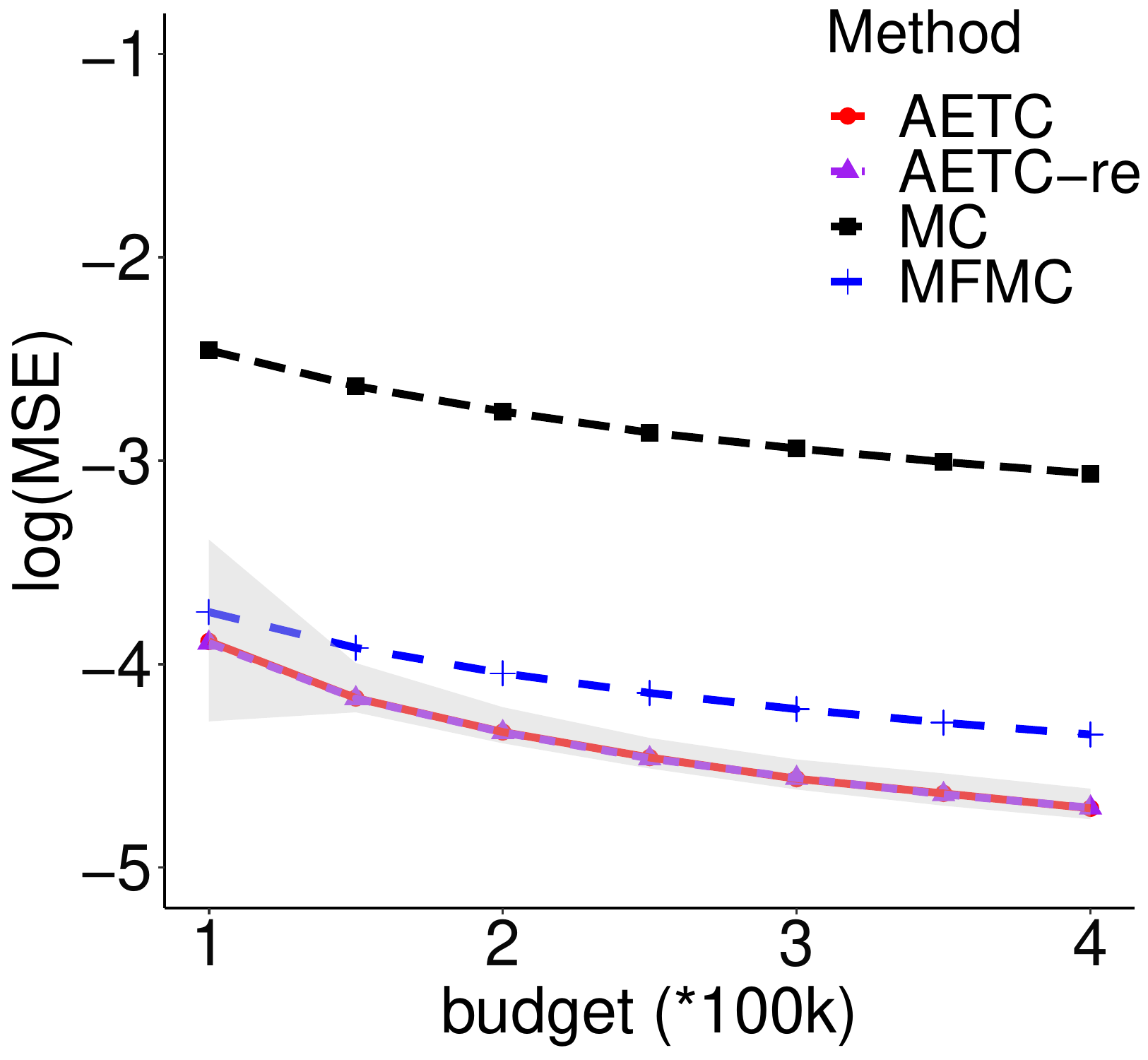}\ \ 
\includegraphics[width=0.32\textwidth]{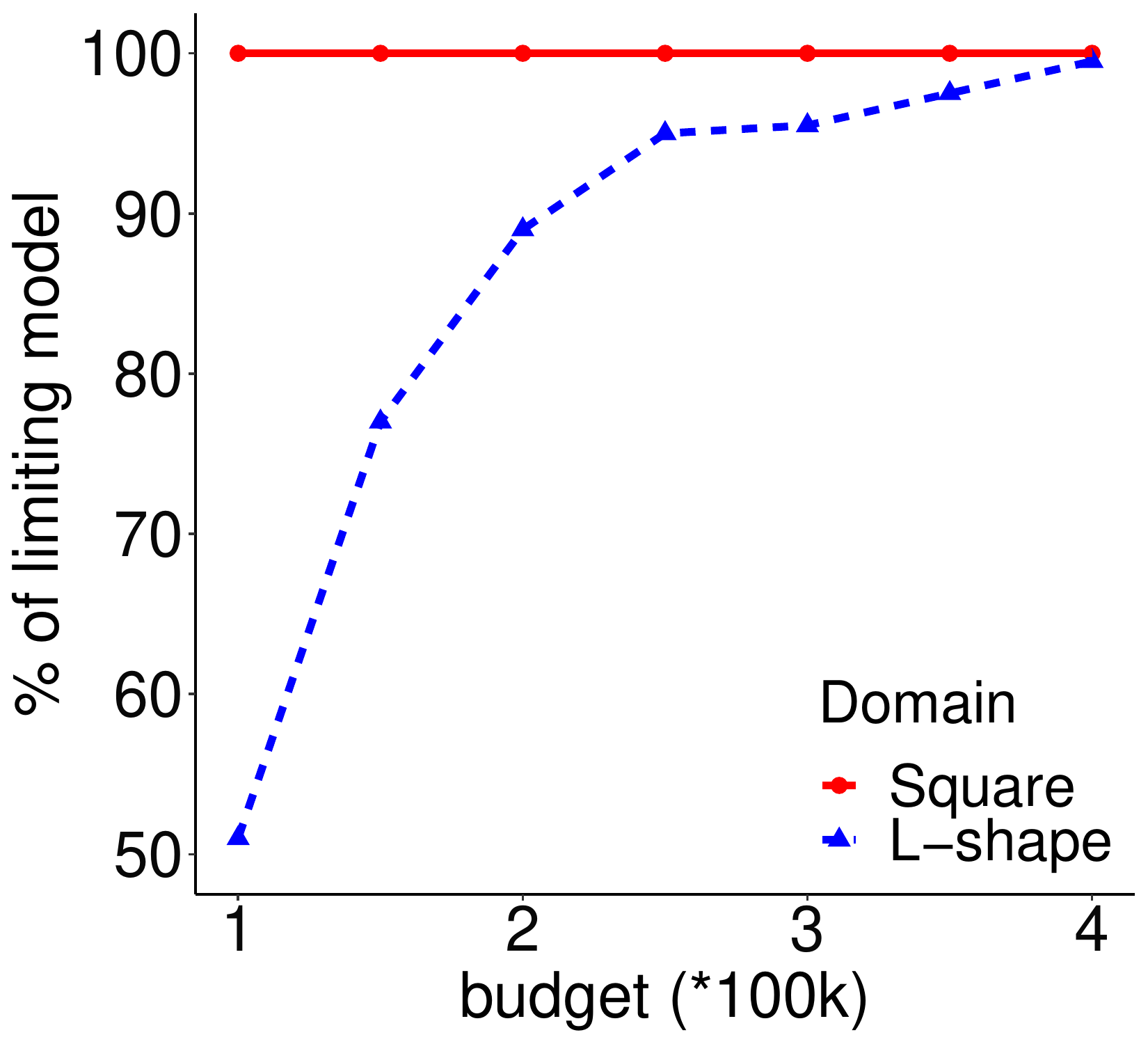}
\caption{\small Comparison of the ($\log_{10}$) mean-squared error of the LRMC estimator given by the AETC algorithm, the AETC algorithm reusing the exploration samples (AETC-re),
the MC estimator, and the MFMC estimator as the total budget increases from $10^5$ to $4\times 10^5$ in the case of the square domain (\textbf{left}) and the L-shape domain (\textbf{middle}).  
The $0.05$-$0.50$-$0.95$-quantiles are plotted for the LRMC estimator to measure its uncertainty. 
  We also compute the probability (plotted as a percentage) that the AETC algorithm selects the limiting model given by \eqref{>} (\textbf{right}).}
  \label{fig:1}
  \end{center}
\end{figure}

Figure \ref{fig:1} shows that for both domain geometries, the AETC algorithm outperforms both the MC and the MFMC by a notable margin, even though the latter has access to oracle correlation statistics that are not provided to AETC.
Little difference between AETC and AETC-re is visible, which is not surprising since the number of the exploitation samples is much larger than the number of exploration rounds.
The mean-squared error of AETC is smaller in the case of the square domain than in the L-shape domain under the same budget, for which a possible explanation is that the variance of the surrogate models in the former is smaller than in the latter (Table \ref{145}), making the exploration procedure more efficient in the square domain case. 
Finally, note that as the budget goes to infinity, the mean-squared error of the AETC algorithm decays to zero, and the frequency of the AETC exploiting the limiting model given by \eqref{>} converges to $1$, verifying the asymptotic results in Theorem \ref{thm:AETC}.
The statistics of the learned regression coefficients of the limiting model in both cases when $B = 4\times 10^5$ are provided in Figure \ref{fig:beta}.

In this example, despite a deterministic nonlinear relationship between $f(Y)$ and $X^{(i)}$, the conditional expectation $\E[f(Y)|X_S]$ does seem to admit a reasonable approximation using an appropriately constructed linear combinations of the low-fidelity model outputs in the selected subsets by AETC.
In particular, it can be seen from Figure \ref{fig:beta} that the regression coefficients learned by AETC on the limiting model, which are estimated based on less than $25$ random exploration samples in both cases, are concentrated around their true values computed using the oracle dataset (50000 independent samples). The concentration effect is more prominent in the square domain case due to a larger exploration rate. 

\begin{figure}[htbp]
\begin{center}
\includegraphics[width=0.38\textwidth]{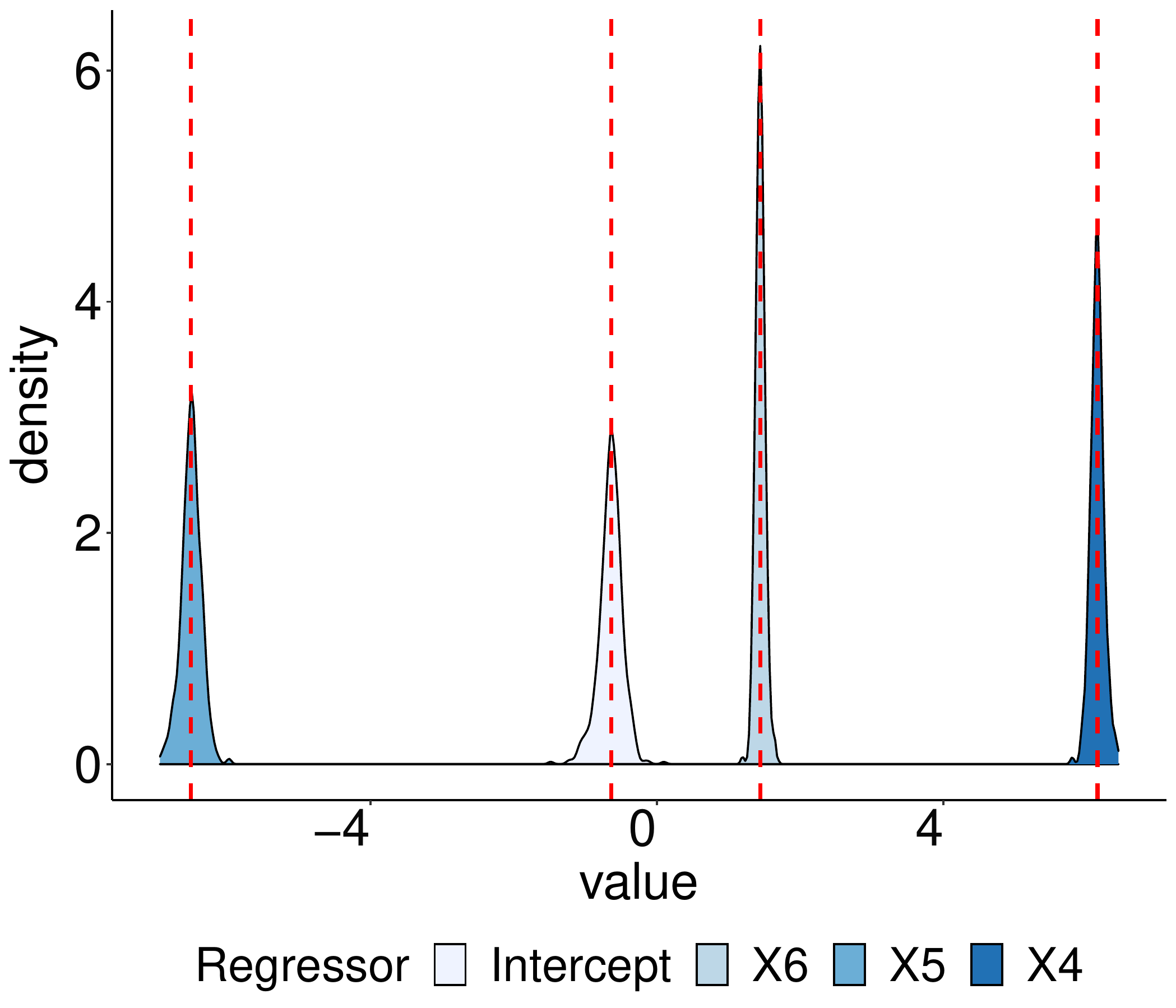}\hspace{1.5 cm}
\includegraphics[width=0.38\textwidth]{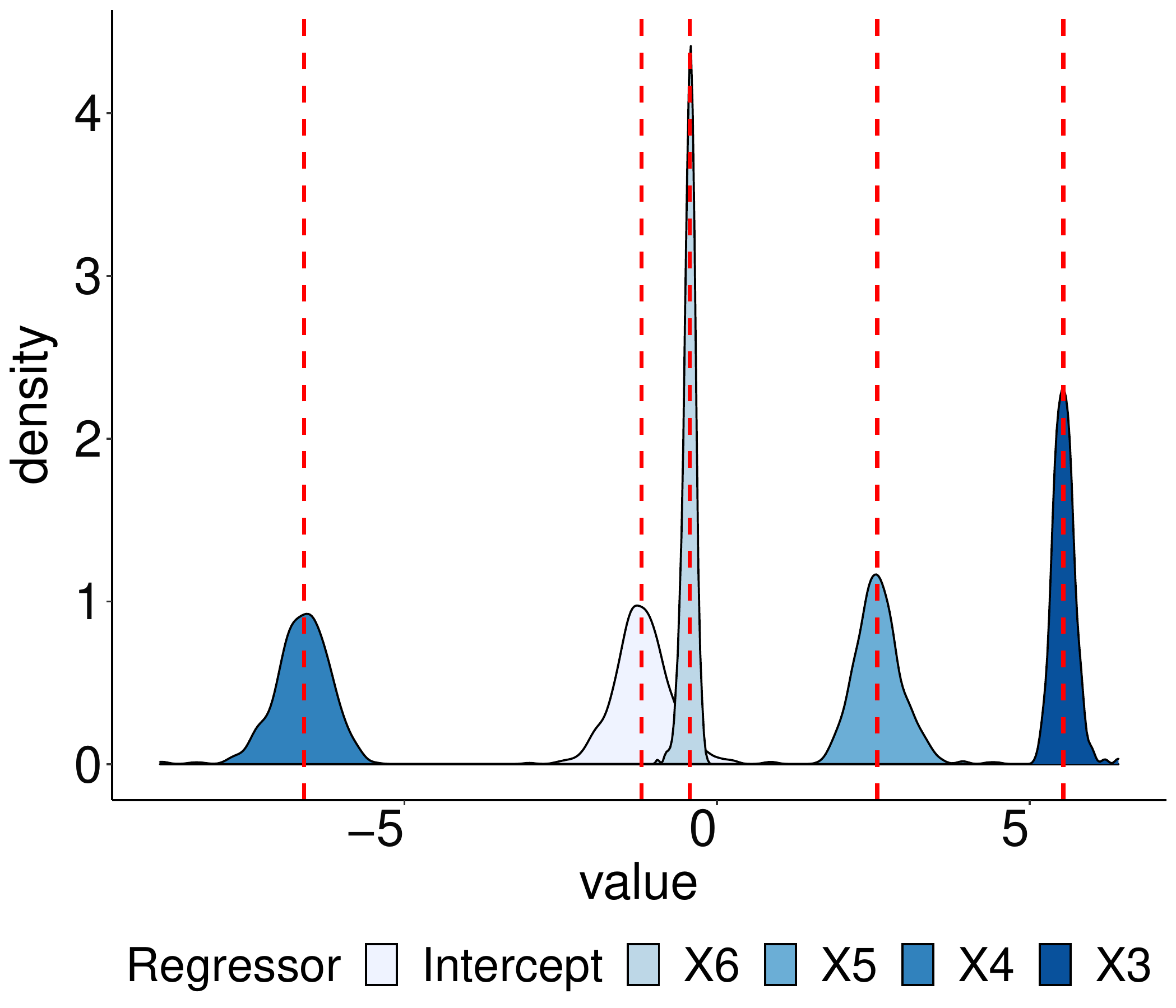}
\caption{\small Densities of the estimated coefficients of the limiting model learned by AETC under budget $B = 4\times 10^5$ in $200$ experiments in the case of the square domain (\textbf{left}) and the L-shape domain (\textbf{right}). Red dashed lines correspond to the true values of the coefficients estimated using the oracle dataset.}
  \label{fig:beta}
  \end{center}
\end{figure}

  
In the rest of this section, we focus for simplicity on the square domain case to further investigate the performance of AETC. 
We will take a much larger KL truncation dimension $d= 20$ to inject more uncertainty into the models. 
To better demonstrate the behavior of the algorithms in the asymptotic regime, we use an increased budget range compared to the previous simulation, i.e., the total budget $B$ ranges from $2\times 10^5$ to $12\times 10^5$, with budget increment $2\times 10^5$ units.
The oracle statistics of $f(Y)$ and $X^{(i)}$ are computed over $50000$ independent samples, and shown in Table \ref{146}:

\medskip
\begin{table}[htbp]
\begin{center}
\small
\begin{tabular}{l*{6}{c}r}
Models              & $f(Y)$ &$X^{(1)}$ & $X^{(2)}$ & $X^{(3)}$ & $X^{(4)}$& $X^{(5)}$  & $X^{(6)}$ \\
\hline
$\text{Corr}(\cdot, f(Y))$ & 1 & 0.998 & 0.989 & 0.966 & 0.914 & 0.789 & -0.135\\
Mean            & 9.645 & 9.200 & 8.752 & 8.290 &  7.784 & 7.142 & 6.160\\
Standard deviation            & 0.137 & 0.120 & 0.104 & 0.089 &  0.073 & 0.052 & 0.027\\
Cost            & 4096& 1024 & 256 & 64 & 16 &  4 & 1 \\
\end{tabular} 
\end{center}
\caption{\small Oracle information of $f(Y)$ and $X^{(i)}$ (approximated with $3$ digits) in the case of the square domain with randomness dimension $d = 20$.}\label{146}
\end{table}

We first conduct a standard analysis for AETC, including an average mean-squared error comparison with the other two methods and the convergence to the limiting model.
As before, we use oracle statistics to compute the limiting model via \eqref{>}, which in this case is given by $f(Y)\sim X^{(3)}+X^{(4)}+X^{(5)}+X^{(6)}+\text{intercept}$. Note that this model is different from the one in the previous example where $d = 4$. A plausible explanation, as can be inspected from Table \ref{145} and \ref{146}, is that the low-fidelity models are less correlated with the high-fidelity model when $d$ is large, thus AETC leans toward utilizing more low-fidelity models to achieve equally accurate predictions.
Moreover, we include two additional plots to summarize the statistics (median) of the exploration/exploitation rate in AETC as opposed to the optimal rates \eqref{yuc} computed using the oracle statistics, as well as a plot of the estimation given by AETC along $100$ trajectories at different budget levels. The results are reported in Figure \ref{fig:more1}.

\begin{figure}[htbp]
\begin{center}
\includegraphics[width=0.32\textwidth]{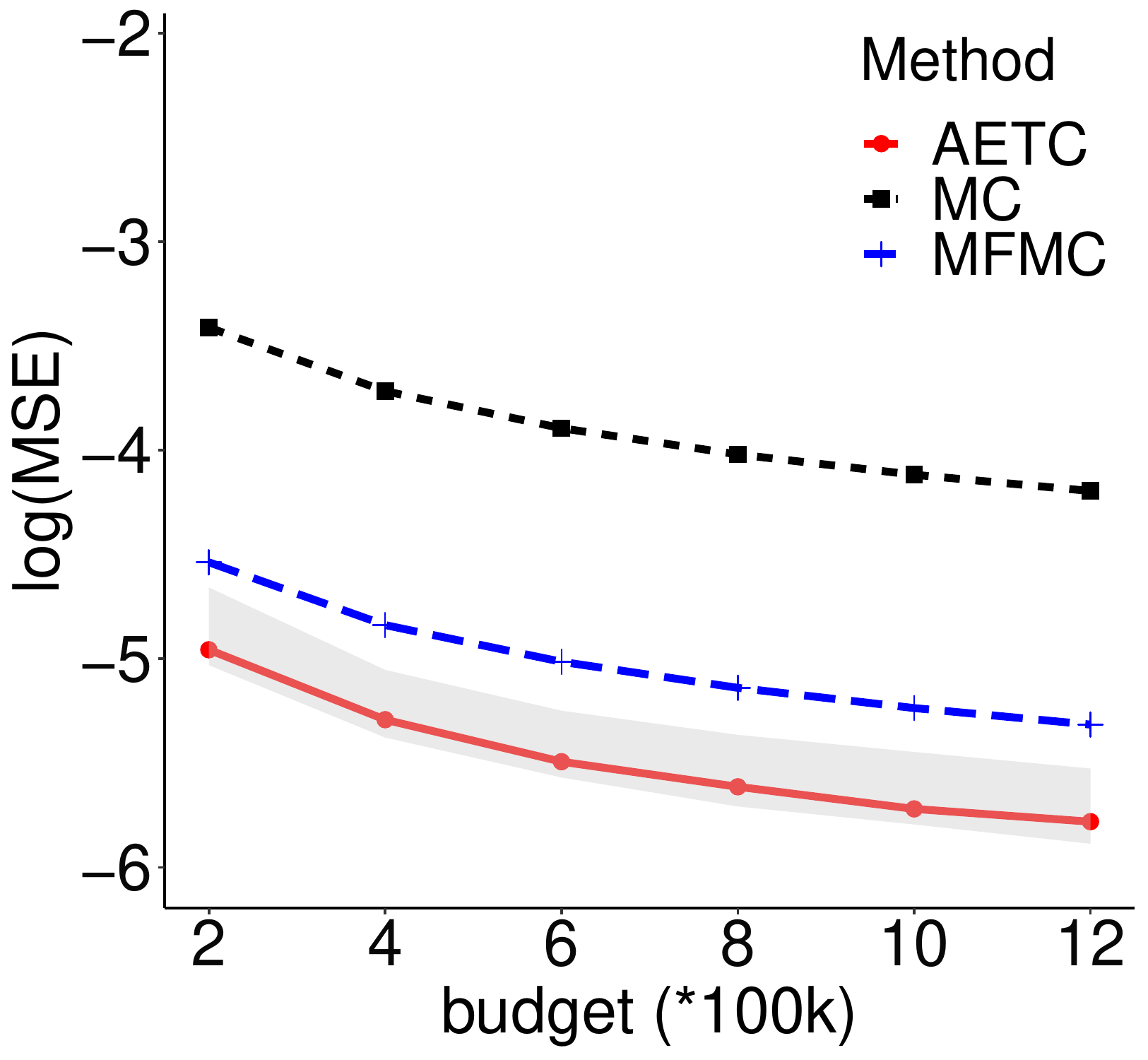}\hspace{2.5 cm}
\includegraphics[width=0.32\textwidth]{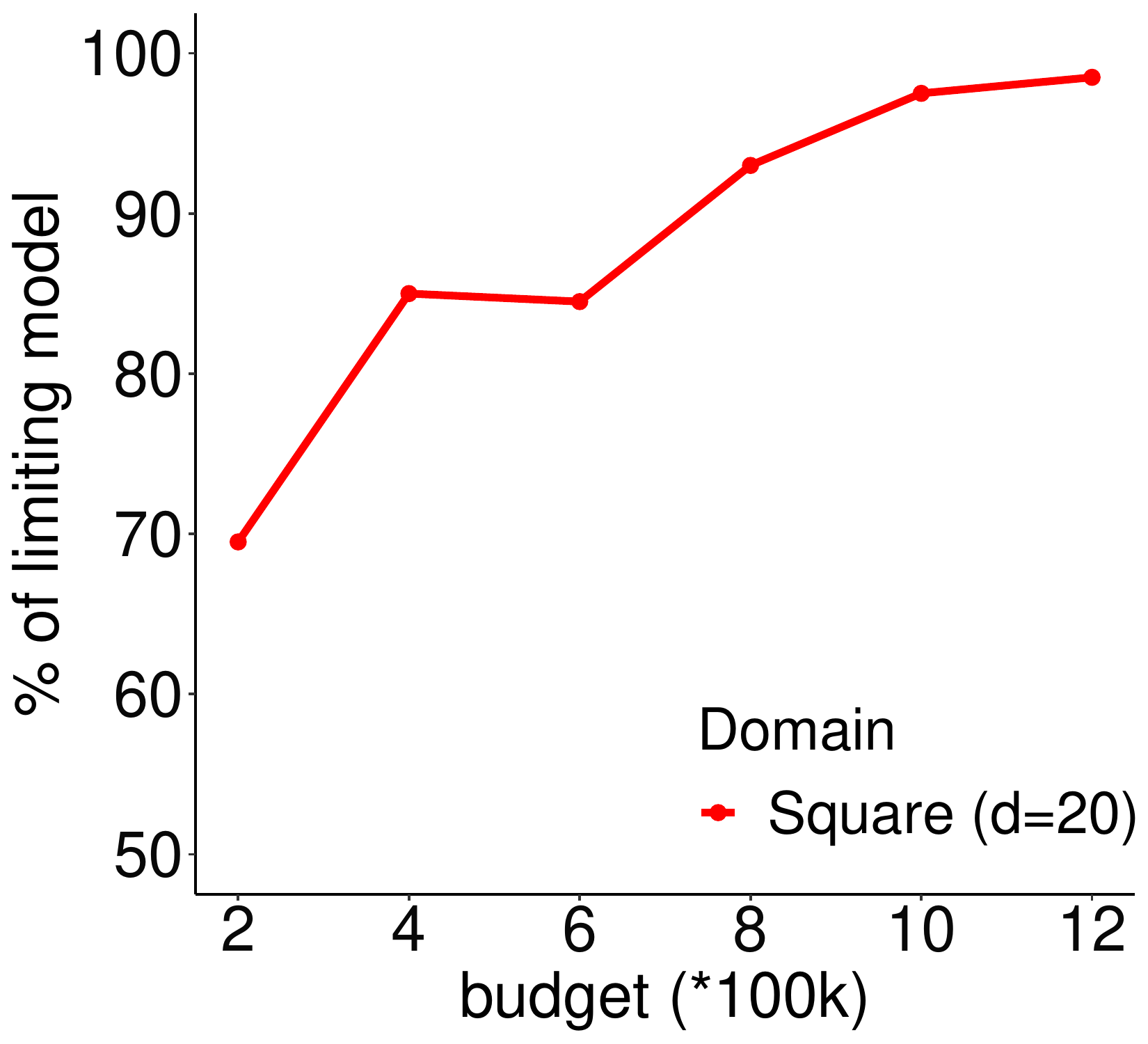}\\
  \noindent\makebox[\linewidth]{\rule{\textwidth}{0.6 pt}}
  \vspace{0.3 cm}
  
\includegraphics[width=0.32\textwidth]{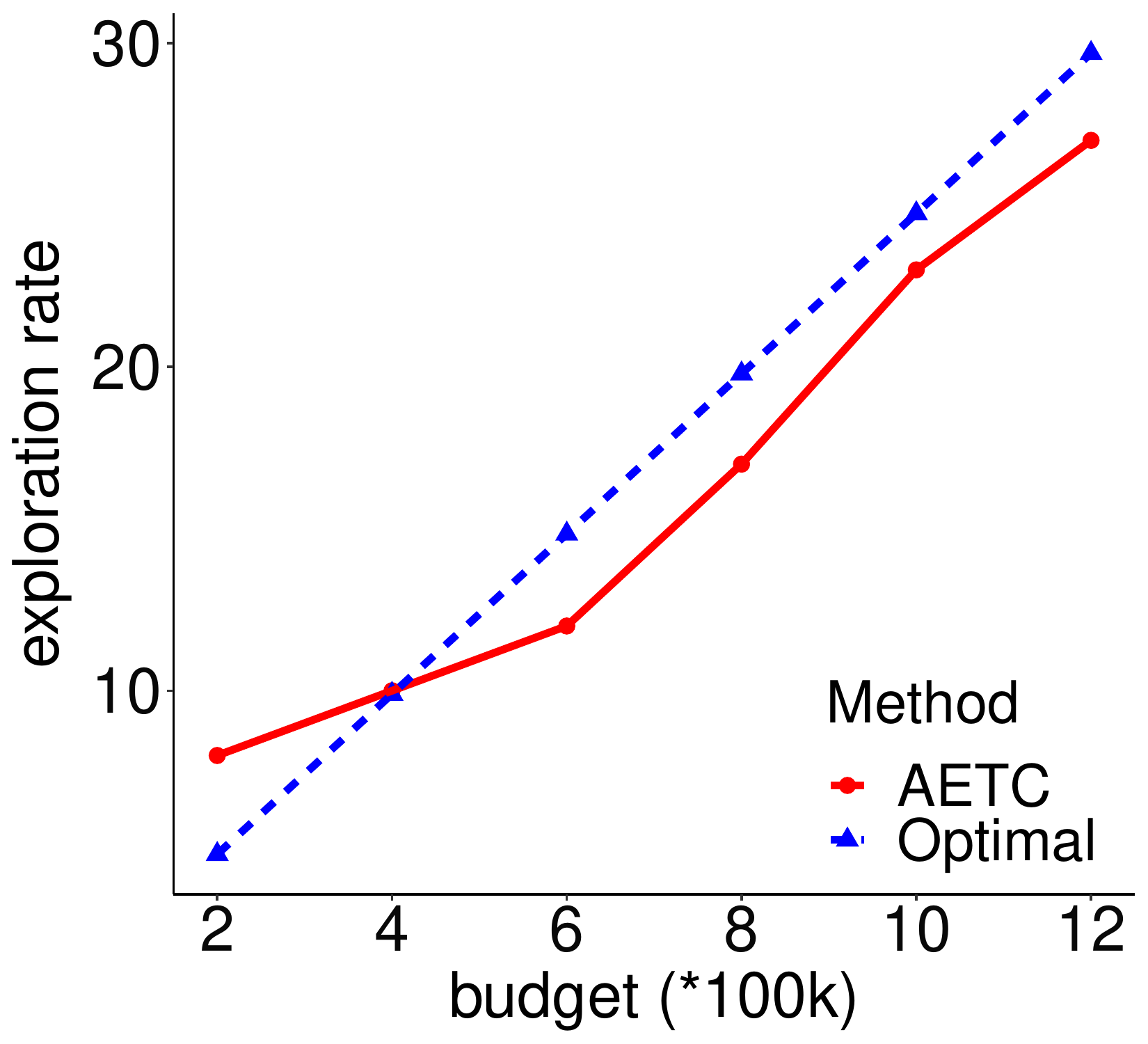}\ \ 
\includegraphics[width=0.32\textwidth]{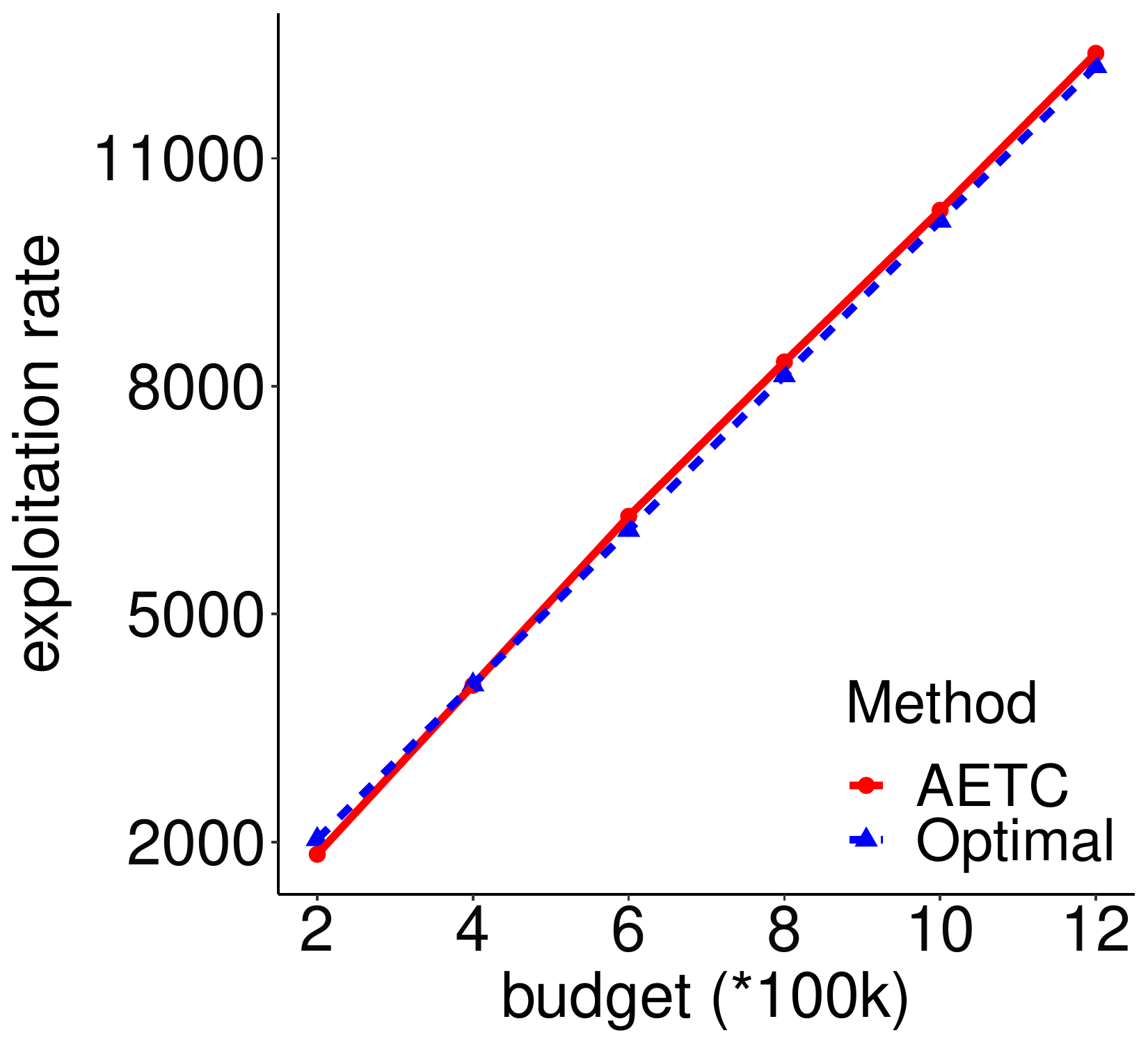}\ \   
\includegraphics[width=0.32\textwidth]{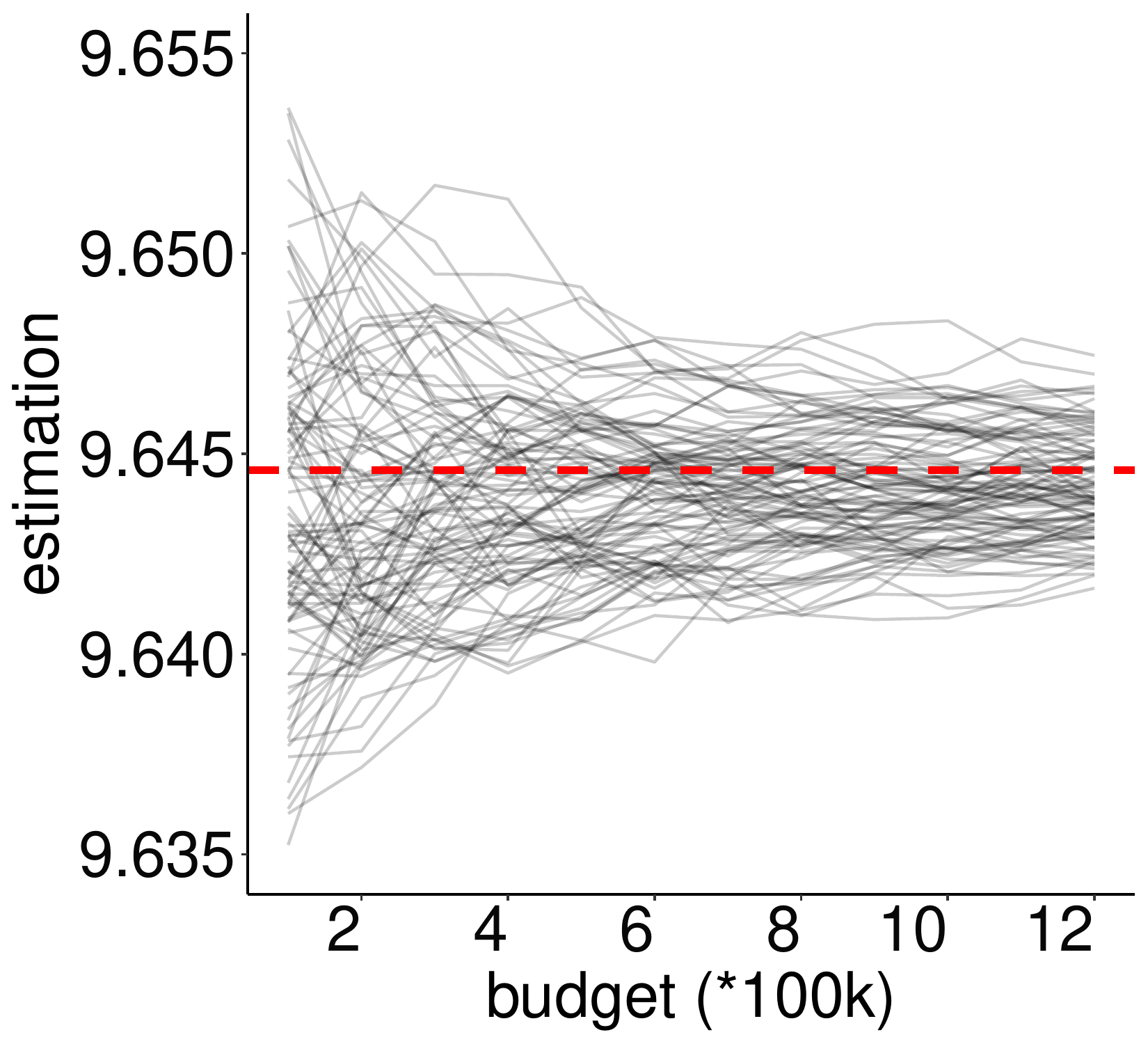}
\caption{\small Comparison of the ($\log_{10}$) mean-squared error of the LRMC estimator given by the AETC algorithm, the MC estimator, and the MFMC estimator as the total budget increases from $2\times 10^5$ to $12\times 10^5$ in the case of the square domain with KL truncation parameter $d=20$ (\textbf{top left}).  
The $0.05$-$0.50$-$0.95$-quantiles are plotted for the LRMC estimator to measure its uncertainty. 
  We also compute the probability (plotted as a percentage) that the AETC algorithm selects the limiting model given by \eqref{>} (\textbf{top right}), and the median of the exploration (\textbf{bottom left}) and exploitation (\textbf{bottom middle}) rate in AETC at different budget. Finally, we provide the estimated values for $\E[f(Y)]$ of the LRMC estimation in AETC at different budget along 100 random trajectories, with the red dashed line referring to the true value of $\E[f(Y)]$ (\textbf{bottom right}). }
  \label{fig:more1}
  \end{center}
\end{figure}

Figure \ref{fig:more1} shows that AETC consistently outperforms the other two methods as in Figure \ref{fig:1} and the expected model convergence. Both the exploration and exploitation rates in AETC asymptotically match the optimal rates computed using the oracle statistics, verifying the statement \eqref{rr1} in Theorem \ref{thm:AETC}. Moreover, by plotting the AETC estimation along $100$ random trajectories (with multiple evaluations at different budgets), we notice that the LRMC estimator in the AETC algorithm converges to $\E[f(Y)]$ as the budget goes to infinity. This observation is consistent with Remark \ref{rrrm}.

We next investigate how the performance of AETC depends on the regularization parameters $\alpha_t$.
In addition to the value $\alpha_t = 4^{-t}$ previously used, we consider two alternative choices $\alpha_t = 2^{-t}$ and $\alpha_t = 8^{-t}$ corresponding to a more and less active exploration strategy, respectively, in the early stage of learning.
We apply AETC with different regularization parameters to the same training dataset under different total budgets and record the corresponding average mean squared errors.
The simulation results are reported in Figure \ref{fig:more2}.

It can be seen from Figure \ref{fig:more2} that for a large budget, the three exponential-decaying choices for $\alpha_t$ yield similar accuracy results. For a small budget, however, the larger $\alpha_t$ is slightly more accurate, likely due to a more aggressive early-stage exploration enforced by the regularization.

\begin{figure}[htbp]
\begin{center}
\includegraphics[width=0.35\textwidth]{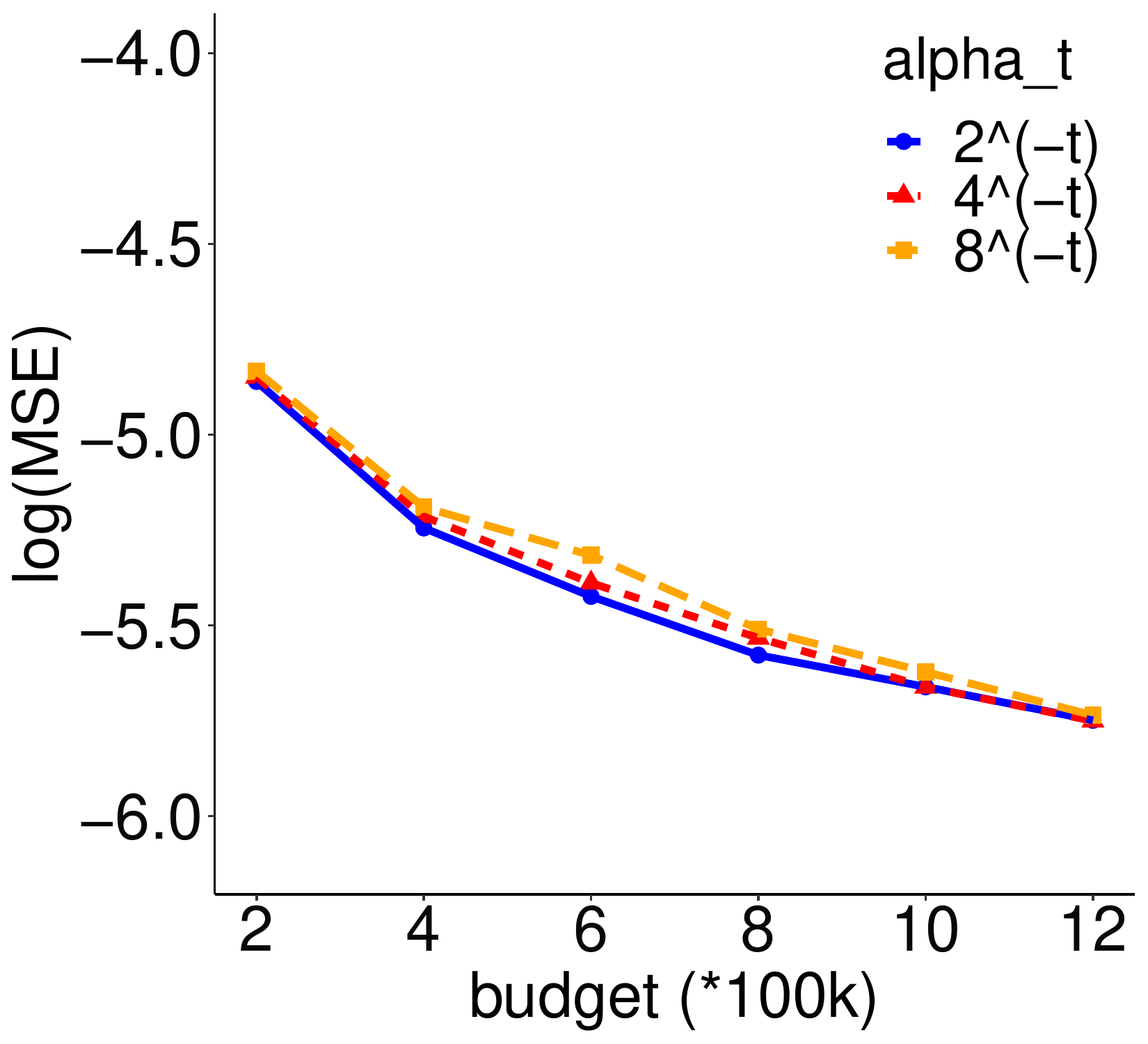}
\caption{\small Comparison of the average ($\log_{10}$) mean-squared error of the AETC algorithm at three different regularization parameters ($\alpha_t = 2^{-t}, 4^{-t}, 8^{-t})$ as the total budget increases from $2\times 10^5$ to $12\times 10^5$ in the case of the square domain with KL truncation $d=20$. }
  \label{fig:more2}
  \end{center}
\end{figure}


\subsubsection{Vector-valued high-fidelity output}
The same experiment is repeated when $f(Y)$ is taken as a vector-valued response, and hence we utilize Algorithm \ref{alg:aETC_mul}. 
We take $f(Y)$ to be defined by $9$ randomly selected components of the magnitude of the discrete solution (displacement) given by the high-fidelity model within a region shown with the highlighted (blue) color in Figure~\ref{fig:struct}, i.e.,
\begin{align*}
&f(Y) := \bar{\bm u}^{(0)}|_T &   \bar{\bm u}^{(0)} = \sqrt{\left(\bm {u}^{(0)}_x\right)^2 + \left(\bm {u}^{(0)}_y \right)^2}\in\R_+^{2601}
\end{align*}
where $T$ is a subset of coordinates in the highlighted region in Figure~\ref{fig:struct} with $|T| = 9$, and the arithmetic operations above are taken componentwise. The full spatial domain of the structures depicted in Figure \ref{fig:struct} is in $[0,1]^2$, and the highlighted regions are squares with the size $0.5 \times 0.5$ in the center and lower left corner of the square and L-shape structures, respectively. In order to compute the vector-valued quantity at the same points across all resolutions, we evaluate $\bar{\bm u}$ on a fixed $51 \times 51$ mesh across different resolutions, where the evaluations are accomplished by using the continuous solution from the finite element approximation.

For the square domain, $T$ is taken as a randomly selected $3\times 3$ block of pixels from the $51\times 51$ solution vector corresponding to the highlighted region in Figure \ref{fig:struct}.
For the L-shape domain, $T$ is taken as $9$ randomly sampled components from the $51\times 51$-dimensional solution vector corresponding to the highlighted region in Figure \ref{fig:struct}. 
For both cases, $Q$ is the identity matrix $I_9$, so the $Q$-risk defined in \eqref{newreg} is simply the sum of the mean-squared error of each response. 
We show oracle information of the correlations between $X^{(i)}$ and $f^{(j)}(Y)$, $i\in [6], j\in [9]$ in Figure \ref{corplot}.

\begin{figure}[htbp]
\begin{center}
\includegraphics[width=0.48\textwidth]{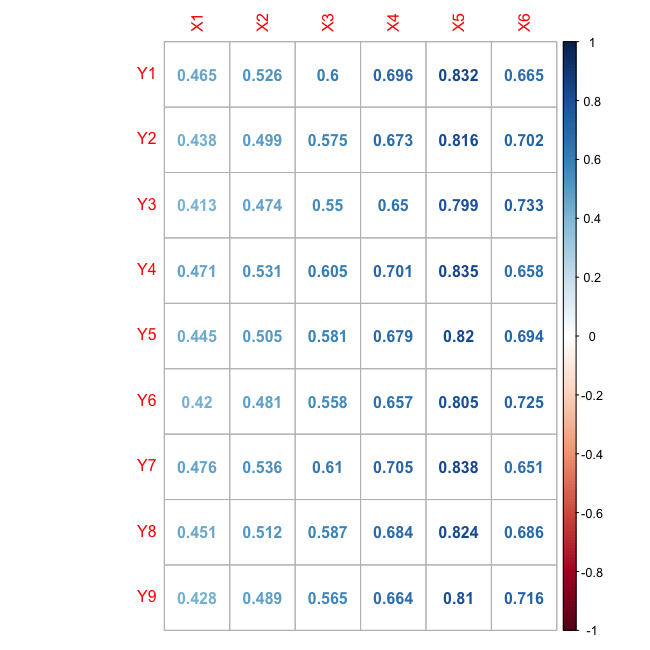}\hspace{0.4cm}
\includegraphics[width=0.48\textwidth]{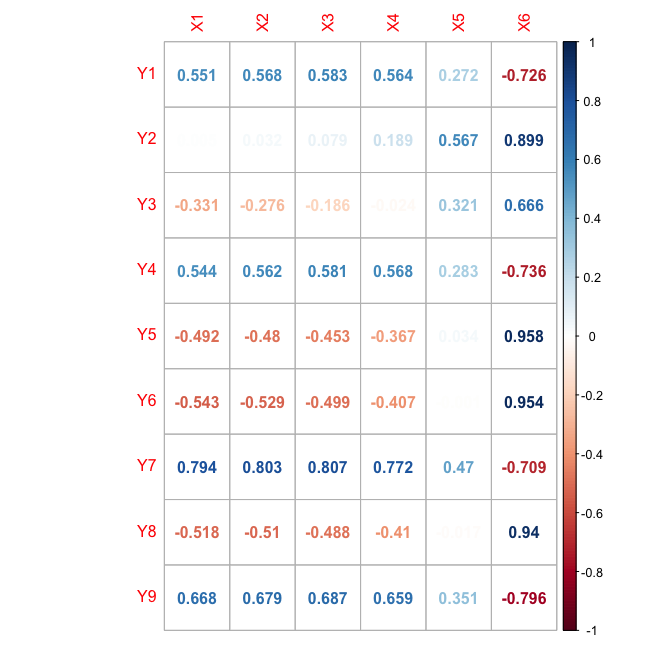}
\caption{\small Correlations between $X^{(i)}$ and $f^{(j)}(Y)$, $i\in [6], j\in [9]$ in the case of the square domain (\textbf{left}) and the L-shape domain (\textbf{right}). The entry associated to indices (Xi, Yj) is the value of $\cor(X^{(i)}, f^{(j)}(Y))$.}   \label{corplot}
   \end{center}
\end{figure}

For both domain structures, cheaper models have some strong correlations with the high-fidelity model, implying that the cost-correlation hierarchical structure (an assumption for the MFMC) is violated. We thus compare only the MC estimator with the AETC (for vector-valued high-fidelity output). The oracle limiting models to which AETC will converge are:
\begin{itemize}
\item Square domain: $f(Y)\sim X^{(4)} + X^{(5)}+X^{(6)}+\text{intercept}$
\item L-shape domain: $f(Y)\sim X^{(3)}+X^{(4)}+X^{(5)}+X^{(6)}+\text{intercept}$. 
\end{itemize}
The details of the results are given in Figure \ref{fig:11}, which are consistent with the conclusion drawn from Figure \ref{fig:1}. 

\begin{figure}[htbp]
\begin{center}
\includegraphics[width=0.32\textwidth]{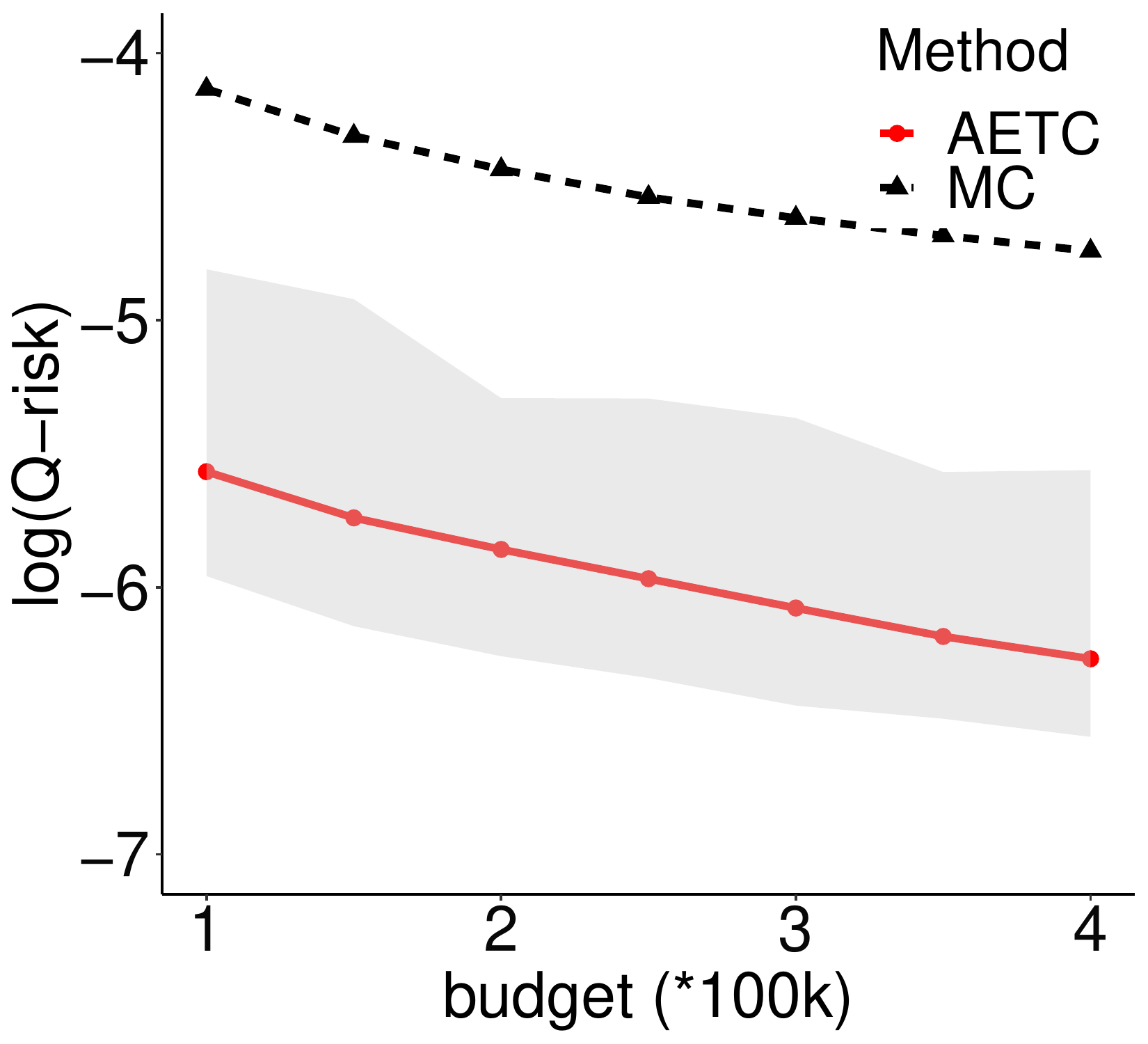}\ \ 
\includegraphics[width=0.32\textwidth]{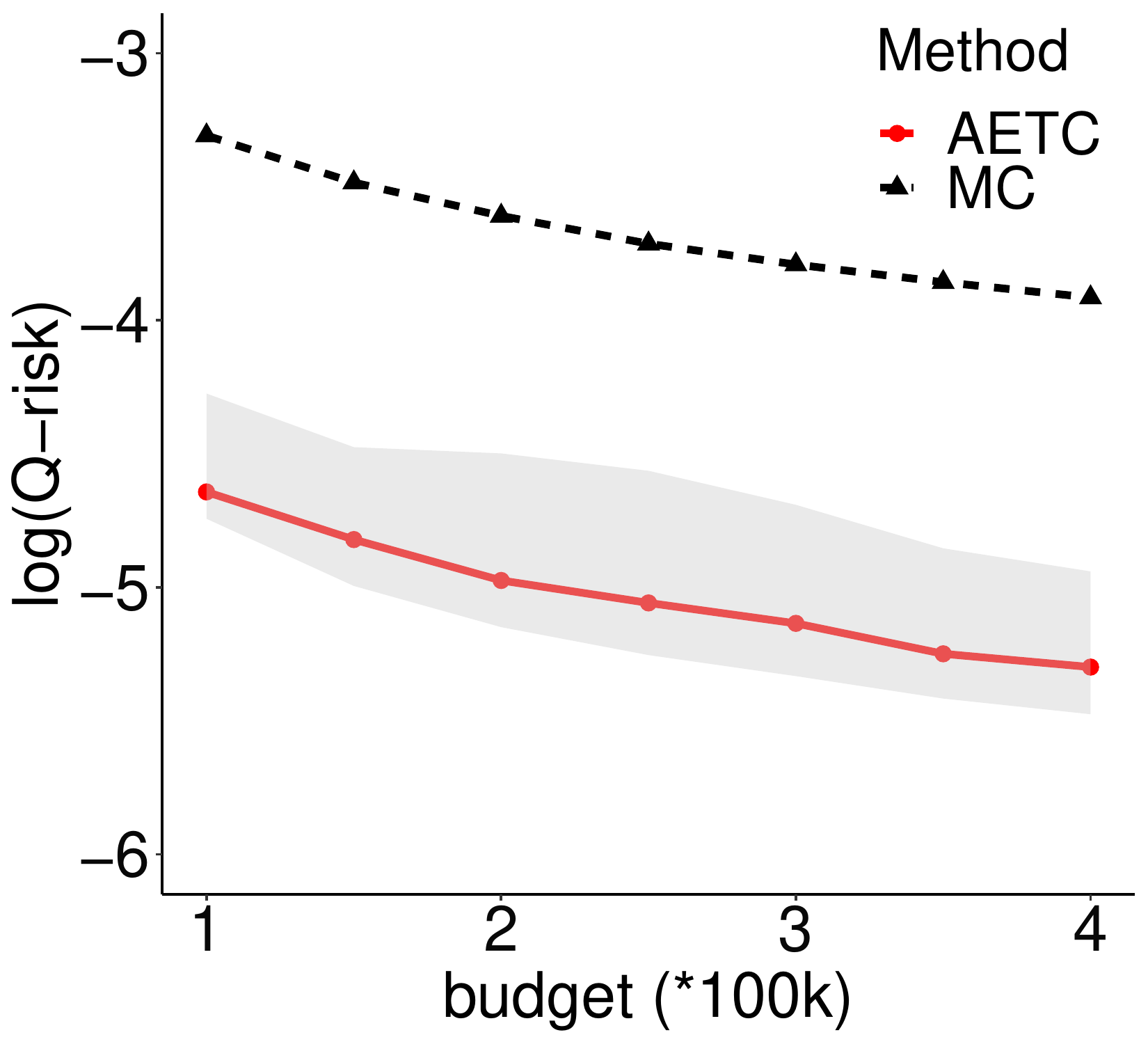}\ \ 
\includegraphics[width=0.32\textwidth]{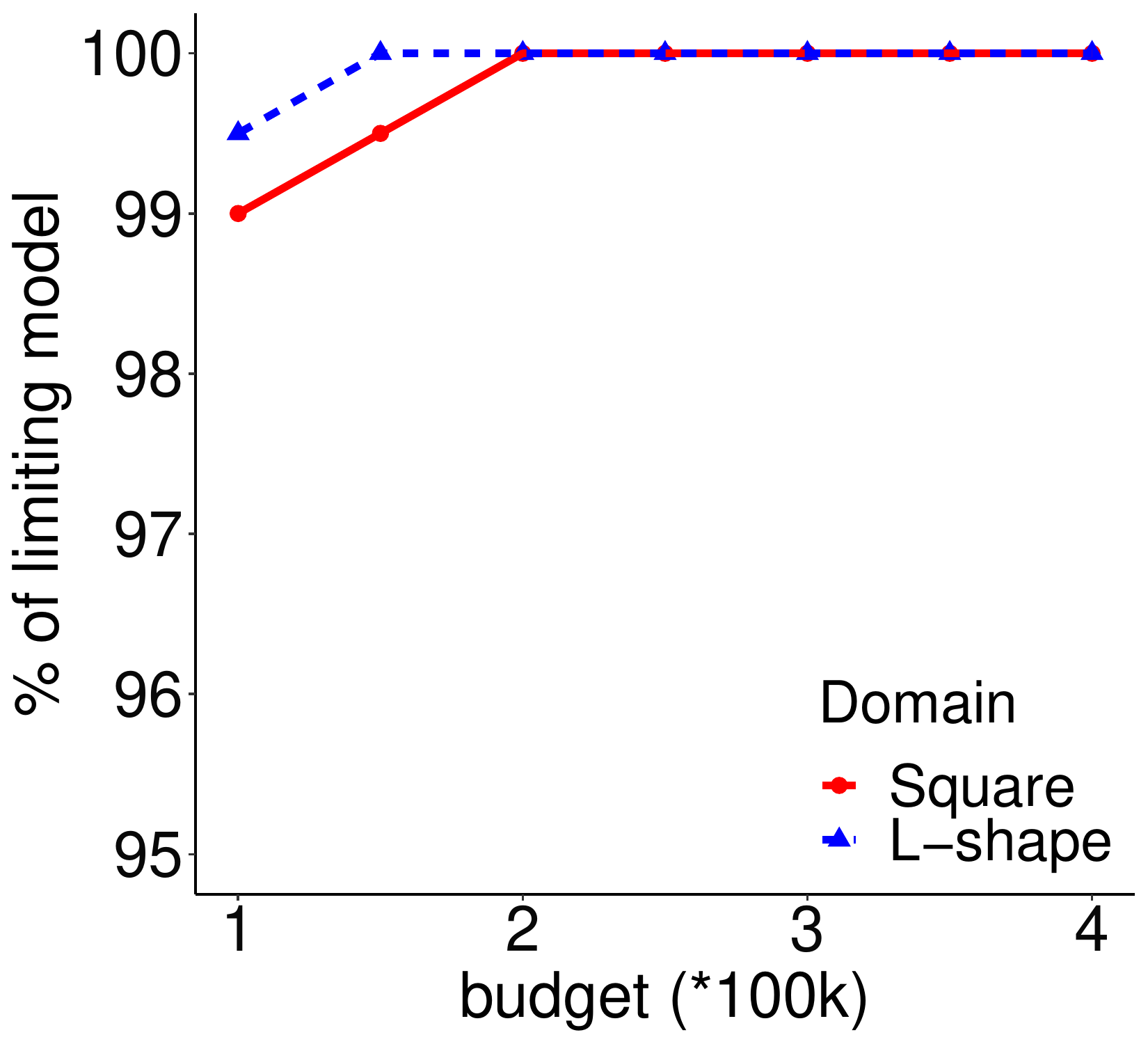}
\caption{\small Comparison of the ($\log_{10}$) $Q$-risk of the LRMC estimator given by the AETC algorithm and the MC estimator as the total budget increases from $10^5$ to $4\times 10^5$ in the case of the square domain (\textbf{left}) and the L-shape domain (\textbf{middle}).  
The $0.05$-$0.50$-$0.95$-quantiles are plotted for the LRMC estimator to measure its uncertainty. 
  We also compute the probability (plotted as a percentage) that the AETC algorithm selects the limiting model given by \eqref{><} (\textbf{right}).}
  \label{fig:11}
  \end{center}
\end{figure}

\subsection{Mixture of reduced-order models}\label{ssec:rom-gp}

In this example, we consider a multifidelity problem using the same model as in the previous section on the square domain, but we generate
different types of reduced-order models (or surrogate models) which approximate the compliance of the solution to \eqref{elliptic_pde_stochastic}.  
We will use the compliance computed via the Finite Element Method on a $2^6\times 2^6$ mesh as the ground truth, i.e., the surrogate model $X^{(1)}$ in the first table in Table~\ref{145} is treated as the high-fidelity model $Y$. 
We explore two classes of choices for reduced-order models: 
\begin{itemize}
  \item Gaussian process (GP) emulators. Low-fidelity regressors $X^{(i)}$ for $i \in [6]$ are generated as the mean of GP emulators built on compliance data from $Y$. We use an exponential covariance kernel and optimize hyperparameters by maximizing the log-likelihood. For $n_T = 10, 100, 1000$ training points in parameter space, this defines models $X^{(1)},~X^{(3)},~X^{(5)}$, respectively. We then select non-optimal hyperparameters with the same training data $n_T = 10, 100, 1000$, which defines models $X^{(2)},~X^{(4)},~X^{(6)}$, respectively. The cost of these models is given by the cost of training, optimization, and evaluation the GP averaged over $2 \times 10^4$ different values of $\bm p$. We perform each experiment five times and report the average time (cost) for each model in Table~\ref{1451}.
  \item Projection-based model reduction with proper orthogonal decomposition (POD).  We generate $k$ POD basis functions from high-fidelity displacement data, use this to form a rank-$k$ Galerkin projection of the finite element formulation, and models $X^{(i)}$ for $i = (7, \ldots, 12)$ are the compliances computed from the projected systems of rank $k = (1, 2, 3, 4, 5, 10)$, respectively. The cost of this procedure is taken as \textit{only} the cost of solving the rank-$k$ projected system and does \textit{not} include the time required to collect POD training data or the time required to compute the POD modes.
\end{itemize}
More details about the experimental setup above are given in Appendix \ref{app:num-setup-2}. The oracle statistics and costs for the GP and POD low-fidelity models are shown in Table \ref{1451}. Note that here we have not normalized the cost relative to the low-fidelity model.  

\begin{table}[htbp]
\begin{center}
\small
\begin{tabular}{l*{6}{c}r}
Models              & $f(Y)$ &$X^{(1)}$ & $X^{(2)}$ & $X^{(3)}$ & $X^{(4)}$& $X^{(5)}$ \\
\hline
$\text{Corr}(\cdot, f(Y))$ & 1 & 0.993 & 0.414 & 1-2e-05 & 0.401 & 1-1e-06 \\
Mean            & 9.197 & 9.195 & 9.147 & 9.197 & 9.045 & 9.197 \\
Standard deviation    &  0.113      & 0.122 & 0.556 & 0.113 & 0.335 &  0.113  \\
Cost            & 9233.69& 0.31 & 0.31 & 2.31 & 2.33 &  30.79  \\
\eqref{yuc} with $S=\{i\}$  & --& 4.202 & 203.025 & 0.358 & 214.676 &  3.574  \\
\end{tabular} 
\bigskip

\begin{tabular}{l*{6}{c}r}
Models            & $X^{(6)}$  & $X^{(7)}$ &$X^{(8)}$ & $X^{(9)}$ & $X^{(10)}$ & $X^{(11)}$ & $X^{(12)}$  \\
\hline
$\text{Corr}(\cdot, f(Y))$ & 1-2e-04 & 0.999 & 1-2e-04 & 1-5e-05 & 1-8e-06 & $\approx 1$ & $\approx 1$ \\
Mean           & 9.196 & 9.189 & 9.194 & 9.195 & 9.197 & 9.197 & 9.197  \\
Standard deviation    & 0.114    & 0.113 & 0.113 & 0.113 & 0.113 &  0.113 &  0.113 \\
Cost         & 31.29  & 0.18& 0.45 & 0.68 & 0.85 &  1.03 & 2.02 \\
\eqref{yuc} with $S=\{i\}$  & 6.226 & 0.732 & 0.243 & 0.180 & 0.137 & \textbf{0.119} &0.230  \\
\end{tabular} 
  \caption{\small Oracle information of $f(Y)$ and $X^{(i)}$. The numerator of the asymptotic average conditional MSE estimate \eqref{yuc} is also shown.}\label{1451}
\end{center}
\end{table}

We now apply the (scalar response) AETC algorithm to this multifidelity setup, with the total budget ranging from $10^5$ to $2\times 10^5$, incremented by $0.2\times 10^5$ in our experiments.
We have $12$ low-fidelity models in total, and exhausting all of them for selection would require complexity on the order of $2^{12}-1$.
Since the AETC algorithm generally produces an efficient combination of relatively cheap regressors, we set the maximal number of regressors in the model to be $5$ to accelerate computation, i.e., we explore only for $s = |S| \leq 5$. 
(We could have used the full model, which would not incur too much an increase in computation time but would require taking more exploration samples to start with, which is a waste of resources.)

The performance of AETC is compared to the direct MC estimator applied to $f(Y)$, and the results are reported in the first plot in Figure \ref{fig:111}. The figure shows that AETC is more efficient than the MC estimator by a substantial margin in terms of the mean-squared error.
To better see how this is reflected in practice, we fix the budget to be $B = 10^5$, run $200$ experiments of both AETC and the MC, and compute the difference between the estimated values of $\E[f(Y)]$ and the ground truth. This is illustrated in the second plot in Figure \ref{fig:111}.  
Since this multifidelity setup contains surrogates obtained from different types of methods, we investigate which are chosen by AETC for exploitation.  
This information is given in the last plot in Figure \ref{fig:111}. 
The most frequent models chosen by the AETC are between model $X^{(10)}$ and $X^{(11)}$, both of which have near-perfect correlation with $f(Y)$ with only a moderate cost. 
In fact, $X^{(11)}$ is also the oracle limiting model given by \eqref{>}, although the convergence is rather slow due to the competitor $X^{(10)}$, which has an extremely high correlation but a slightly cheaper cost.   
The other models which have been selected by the AETC are the other POD models as well as their combinations with models $X^{(1)}$ and $X^{(2)}$. 
These models correspond to the lowest fidelity models in each method of approximation and are often cheap to sample from with reasonably high correlation.

\begin{figure}[htbp]
\begin{center} 
\includegraphics[width=0.32\textwidth]{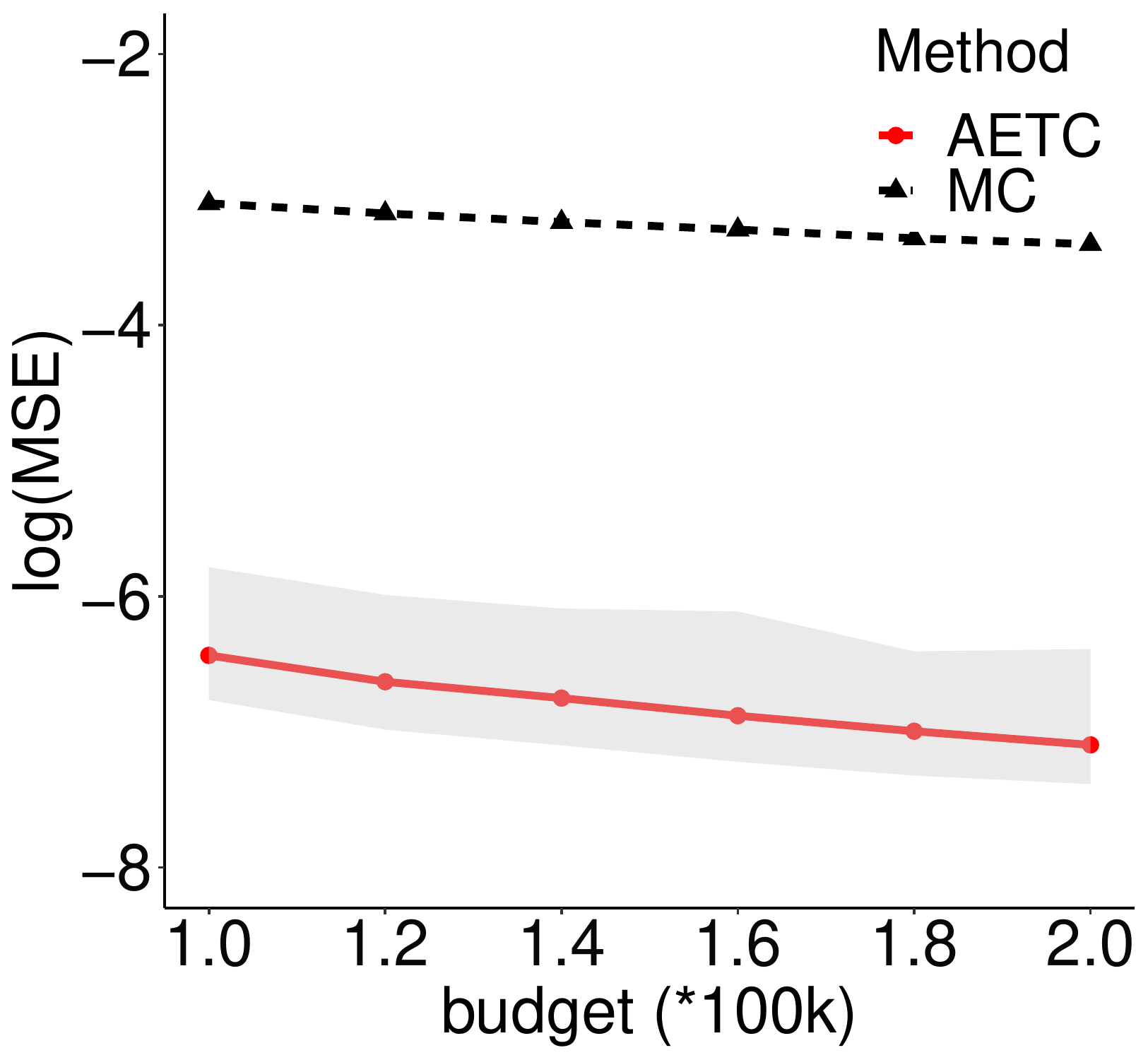}\ \ 
\includegraphics[width=0.32\textwidth]{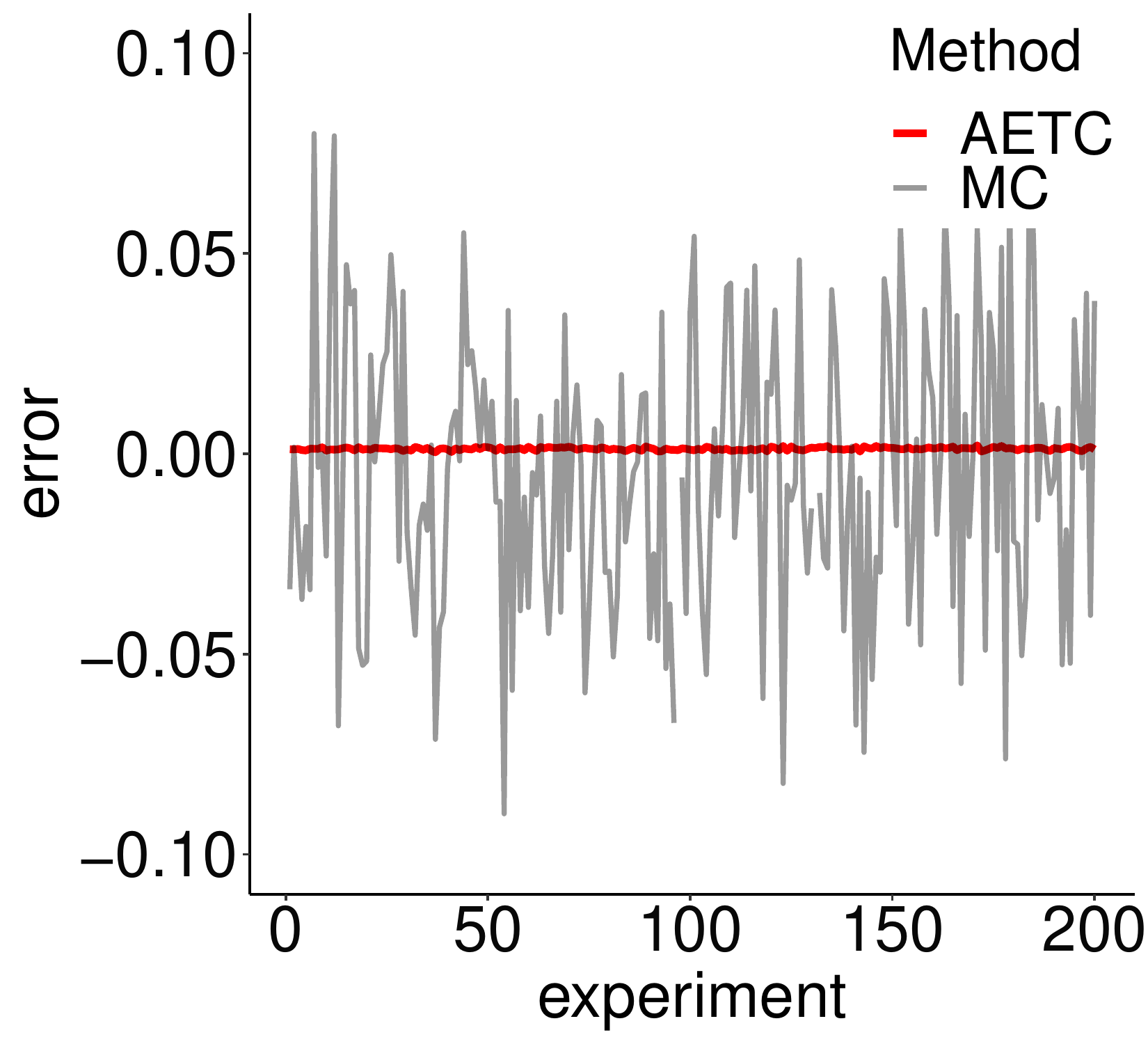}\ \ 
\includegraphics[width=0.32\textwidth]{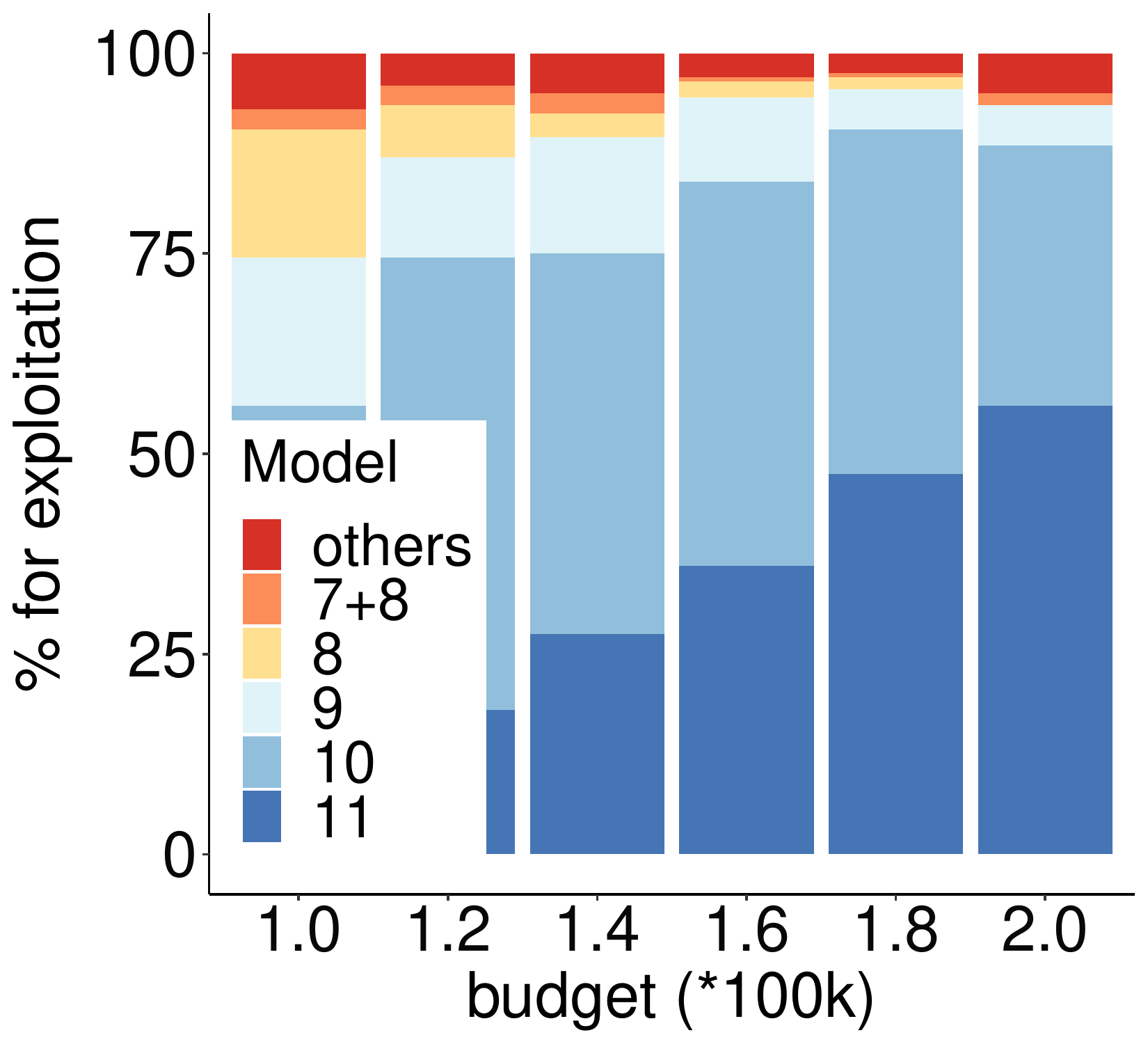}
  \caption{\small The ($\log_{10}$) mean-squared error of the estimators as the budget increases, with the $0.05$-$0.5$-$0.95$ quantiles plotted for the AETC algorithm to measure the uncertainty in the exploration (\textbf{left}). Comparison of the estimation errors of MC and the AETC algorithm by plotting instances of the error over each of $200$ experiments for a fixed total budget $B = 10^5$ (\textbf{middle}). Empirical probability (plotted as a percentage) of certain models being selected by AETC for exploitation (\textbf{right}).}
  \label{fig:111}
  \end{center}
\end{figure}

\section{Conclusions}\label{concl}

We have proposed a novel algorithm for the multifidelity problem based on concepts from bandit learning. Our proposed AETC procedures, Algorithms \ref{alg:aETC} and \ref{alg:aETC_mul}, operate under the assumption of a linear relationship between low-fidelity features and a high-fidelity output. Their exploration phase expends resources to learn about model relationships to discover an effective linear relationship. The exploitation phase leverages the cost savings by regressing on low-fidelity features to produce an LRMC estimate of the high-fidelity output.
Under the linear model assumption, we show the consistency of the LRMC, whose MSE can be partitioned into a component stemming from exploration, and another from exploitation. This partition allows us to construct the AETC algorithm, which is an adaptive procedure operating under a specified budget that decides how much effort to expend in exploration versus exploitation, and also identifies an effective linear regression low-fidelity model. We show that, for a large budget, our AETC algorithms explore and exploit optimally.

The main advantage of our approach is that no \textit{a priori} statistical information or hierarchical structure about models is required, and very little knowledge about model relationships is needed: AETC needs only identification of which model is the trusted high-fidelity one, along with a specification of the cost of sampling each model. The algorithm proceeds from this information alone, which is a distinguishing feature of our approach compared to alternative multifidelity and multilevel methods, and can be used to tackle situations when a natural cost versus accuracy hierarchy is difficult to identify, and when relationships between models are not known.

\section*{Acknowledgement}
We would like to thank the anonymous referees for their very helpful comments which significantly improved the presentation of the paper.

\section*{Appendices}

\begin{appendices}

\section{The LRMC reusing the exploration data}\label{ap:0}
 The goal of this section is to provide a quantitative discussion that accompanies Remark \ref{rem:recycle}. Our analysis in this paper computes coefficients $\widehat{\beta}_S$ in exploitation using only the $N_S$ samples that are taken during exploitation. This analysis neglects the possibility of reusing the $m$ samples from exploration in the estimation of $\widehat{\beta}_S$. All our numerical results fall into a regime where $m \ll N_S$, corresponding to the case when there are many more exploitation samples (of low-fidelity models) than exploration rounds (which require querying the high-fidelity model). This section demonstrates that in this multifidelity regime of interest, recycling of exploration samples during exploitation has negligible impact.

Fix a model $S\subseteq [n]$ .
The LRMC estimator with the exploration samples reused is defined as 
\begin{align}
\lrmc_{S,\re} = \frac{1}{N_S+m}\left(\underbrace{\sum_{\ell\in [N_S]}X_{S,\ell}^T}_{\text{exploitation samples}}+\underbrace{\sum_{j\in [m]}X^T_{\ex,j}|_S}_{\text{exploration samples reused}}\right)\widehat{\beta}_S.
\end{align}
It follows from a similar computation as in \eqref{ana} that the conditional mean-squared error of $\lrmc_{S,\re}$ on the exploration data (including both the exploration samples and the model noise) is 
\begin{align}
\frac{N_S}{(N_S+m)^2}\widehat{\beta}_S\Sigma_S\widehat{\beta}_S + \left(\widetilde{X}^T_m\widehat{\beta}_S-x_S^T\beta_S\right)^2,\label{reuse1}
\end{align} 
where
\begin{align*}
\widetilde{X}_{m} &= \frac{N_S}{N_S+m}x_S+\frac{1}{N_S+m}\sum_{j\in [m]}X_{\ex,j}|_S\\
&\stackrel{\eqref{e1}}{=} x_S + \underbrace{\frac{m}{N_S+m}(\widehat{x}_S(m)-x_S)}_{: = \Delta}.
\end{align*}
Note $\Delta = o(x_S)$ as $m\to\infty$ due to the law of large numbers. 
Averaging the model noise in \eqref{reuse1} yields the average conditional MSE of $\lrmc_{S,\re}$:
\begin{align}
&\E_{\e_S}[\eqref{reuse1}]\nonumber\\
\stackrel{\eqref{dep}, \eqref{pt}}{=}&\  \frac{N_S}{(N_S+m)^2}\left[\beta_S^T\Sigma_S\beta_S + \sigma_S^2\tr(\Sigma_S(Z_S^TZ_S)^{-1})\right]+\E_{\e_S}[(x^T_S(\widehat{\beta}_S-\beta_S))^2]\nonumber\\
\ \ \ \ \ \ \ \ & + 2\E_{\e_S}[x^T_S(\widehat{\beta}_S-\beta_S)\Delta^T\widehat{\beta}_S]+ \E_{\e_S}[(\Delta^T\widehat{\beta}_S)^2]\nonumber\\ 
\simeq&\  \frac{N_S}{(N_S+m)^2}\beta_S^T\Sigma_S\beta_S + \sigma^2_Sx_S^T(Z_S^TZ_S)^{-1}x_S +2\sigma^2_Sx_S^T(Z_S^TZ_S)^{-1}\Delta\nonumber\\
\ \ \ \ \ \ \ \ & + \left(\frac{m}{N_S+m}\right)^2\left[(\widehat{x}_S-x_S)^T\beta_S\right]^2\nonumber\\
\simeq&\  \frac{N_S}{(N_S+m)^2}\beta_S^T\Sigma_S\beta_S + \sigma^2_Sx_S^T(Z_S^TZ_S)^{-1}x_S  + \left(\frac{m}{N_S+m}\right)^2\left[(\widehat{x}_S-x_S)^T\beta_S\right]^2\nonumber\\
\simeq&\   \frac{N_S}{(N_S+m)^2}\beta_S^T\Sigma_S\beta_S + \frac{1}{m}\sigma^2_Sx^T_S\Lambda_S^{-1}x_S+\left(\frac{m}{N_S+m}\right)^2\left[(\widehat{x}_S-x_S)^T\beta_S\right]^2.\label{emini}
\end{align}
Thus, one can verify that
\begin{align}
&\frac{1}{N_S}\beta_S^T\Sigma_S\beta_S + \frac{1}{m}\sigma_S^2\tr(x_Sx_S^T\Lambda_S^{-1})\nonumber\\
\leq&\ \left(1+\frac{m}{N_S}\right)^2\left[\frac{N_S}{(N_S+m)^2}\beta_S^T\Sigma_S\beta_S + \frac{1}{m}\sigma^2_Sx^T_S\Lambda_S^{-1}x_S\right]\nonumber\\
\leq&\ \left(1+\frac{m}{N_S}\right)^2\cdot\eqref{emini}.\label{peru}
\end{align}
While according to \eqref{optm}, the leftmost term in \eqref{peru} converges to $\overline{\mse}_S|_{\widehat{\beta}_S}$ almost surely. 
Thus, providing $m/N_S\ll 1$, reusing exploration samples results in the estimate \eqref{emini}, which asymptotically has a negligible advantage over the estimate $\overline{\mse}_S|_{\widehat{\beta}_S}$ that does not reuse samples.

\section{Proof of Theorem~\ref{thm:conss}}\label{ap:1}
We start by conditioning on $Z_S$. Note that 
\begin{align}
  \widehat{\beta}_S - \beta_S =  Z_S^\dagger\eta_S,\label{pt}
\end{align}
 where $\eta_S$ is the model noise vector with each component being the model noise $\e_S$ in the corresponding exploration sample, i.e., $\eta_S/\sigma_S\in\R^m$ is an isotropic sub-Gaussian random vector with $\E[\eta_S \eta_S^T] = \sigma_S^2I_m$. Substituting \eqref{pt} into \eqref{ana} and applying the Cauchy-Schwarz inequality ($|2\langle x,y\rangle|\leq 2\|x\|_2\|y\|_2\leq \|x\|^2_2+\|y\|_2^2$) yields
\begin{align}
  \mse_S|_{\widehat{\beta}_S} &= \frac{1}{N_S}\left(\beta_S^T\Sigma_S\beta_S+\eta_S^T(Z_S^\dagger)^T\Sigma_SZ_S^\dagger\eta_S+2\beta_S^T\Sigma_S Z_S^\dagger\eta_S\right) + \eta_S^T(Z_S^\dagger)^Tx_Sx_S^TZ_S^\dagger\eta_S\nonumber\\
  &\leq \frac{2}{N_S}\left(\beta_S^T\Sigma_S\beta_S+\eta_S^T(Z_S^\dagger)^T\Sigma_SZ_S^\dagger\eta_S\right) + \eta_S^T(Z_S^\dagger)^Tx_Sx_S^TZ_S^\dagger\eta_S\nonumber\\
  &\leq \frac{2}{N_S}\left(\beta_S^T\Sigma_S\beta_S+\|\Sigma_S\|_2\|Z_S^\dagger\eta_S\|_2^2\right) + \|B_S\eta_S\|_2^2\ \ \ \ \ \ \ \ \ \ \ \ \ B_S = \sqrt{x_Sx_S^T}Z_S^\dagger.\label{exp1}
\end{align}
Both $\|Z_S^\dagger\eta_S\|_2^2$ and $\|B_S\eta_S\|_2^2$ are the quadratic forms of sub-Gaussian random vectors and can be bounded with high probability using the Hanson-Wright inequality \cite[Theorem 6.3.2]{Vershynin_2018}: 
\begin{align}
\P\left(\|Z_S^\dagger\eta_S\|^2_2>C_1\sigma^2_S\|Z_S^\dagger\|^2_F\log m\right)\leq \frac{1}{3m^2}\nonumber\\
\P\left(\|B_S\eta_S\|^2_2>C_1\sigma^2_S\|B_S\|^2_F\log m\right)\leq \frac{1}{3m^2},\label{lalala}
\end{align}
where $C_1$ is an absolute constant depending only on the sub-Gaussian norm of $\e_S/\sigma_S$, and 
\begin{align}
\|Z_S^\dagger\|_F^2 &= \tr(Z_S^\dagger(Z_S^\dagger)^T) = \frac{1}{m}\tr((m^{-1}Z_S^TZ_S)^{-1})\nonumber\\
\|B_S\|_F^2 &= \tr(B_S^TB_S) = \frac{1}{m}\tr(\left(m^{-1}Z_S^TZ_S\right)^{-1} x_Sx_S^T).\label{voc}
\end{align}
Since $Z_S^TZ_S$ is a sum of i.i.d. outer products, 
appealing to a deviation result in \cite[Theorem 2.1]{Mendelson_2006}, we obtain that under assumption \eqref{...} and for $m> s+1$, 
\begin{align}
&\P\left(\left\|\frac{1}{m}Z_S^TZ_S -\Lambda_S\right\|_2>C_2\frac{K^2\log^5 m}{\sqrt{m}}\right)\nonumber\\
=&\ \P\left(\left\|\frac{1}{m}\sum_{\ell\in [m]}X_{\ex,\ell}|_SX^T_{\ex,\ell}|_S -\E[X_SX_S^T]\right\|_2> C_2\frac{K^2\log^5 m}{\sqrt{m}}\right)<\frac{1}{3m^2},\label{cov}
\end{align}
where $C_2$ is an absolute constant. 
Combining \eqref{cov} with the matrix perturbation equality for an invertible matrix $A$ $$(A+\Delta A)^{-1} =  A^{-1}-A^{-1}\Delta AA^{-1} + o(\|\Delta A\|_2)$$ 
yields with high probability,
\begin{align}
&\left(m^{-1}Z_S^TZ_S\right)^{-1} = \Lambda_S^{-1} + E_S& \|E_S\|_2\lesssim \frac{K^2\log^5 m}{\sigma^2_{\min}(\Lambda_S)\sqrt{m}},\label{emm}
\end{align}
where $\sigma_{\min}(\Lambda_S)$ is the smallest singular value of $\Lambda_S$. 
Putting \eqref{emm}, \eqref{cov}, \eqref{voc}, \eqref{lalala} and \eqref{exp1} together, we have with probability at least $1-m^{-2}$, 
\begin{align}
  \mse_S|_{\widehat{\beta}_S}&\lesssim \frac{2}{N_S}\left[\beta_S^T\Sigma_S\beta_S+C_1\sigma_S^2\|\Sigma_S\|_2\tr(\Lambda_S^{-1} + \frac{K^2\log^5 m}{\sigma^2_{\min}(\Lambda_S)\sqrt{m}}I_s)\frac{\log m}{m}\right]\nonumber\\
&\ \ \ \ \ \  + C_1\sigma_S^2\tr(\Lambda_S^{-1}x_Sx_S^T + \frac{K^2\log^5 m}{\sigma^2_{\min}(\Lambda_S)\sqrt{m}}x_Sx_S^T)\frac{\log m}{m}\nonumber\\
&\lesssim\frac{1}{N_S}\beta_S^T\Sigma_S\beta_S +\sigma_S^2\tr(\Lambda_S^{-1}x_Sx_S^T)\frac{\log m}{m}\nonumber\\
&\leq \frac{1}{N_S}\beta_S^T\Sigma_S\beta_S +\sigma_S^2\tr(\Lambda_S^{-1}(x_Sx_S^T+\Sigma_S))\frac{\log m}{m}\nonumber\\
& = \frac{1}{N_S}\beta_S^T\Sigma_S\beta_S +(s+1)\sigma_S^2\frac{\log m}{m}.
\end{align}
This finishes the proof.

\section{Proof of Theorem \ref{thm:AETC}} \label{ap:2}

We first show that $m(B)$ diverges as $B\to\infty$ almost surely. 
To this end, it suffices to show that with probability $1$, 
\begin{align}
\sup_{t>n+1}\max_{S\subseteq [n]}\widehat{\beta}^T_S(t)\widehat{\Sigma}_S(t)\widehat{\beta}_S(t)<\infty. \label{simkey}
\end{align}
Indeed, if \eqref{simkey} is true, by definition \eqref{0001}, for almost every realization $\omega$, there exists an $L(\omega)<\infty$ such that 
\begin{align}
  \sup_{t>n+1}\max_{S\subseteq [n]}k_{1,t}(S; \omega)<L(\omega),\label{good}
\end{align}
where $\omega$ is included to stress the quantity's dependence on realization. 
The exploration stopping criterion of Algorithm \ref{alg:aETC} requires that 
\begin{align*}
m(B; \omega)\geq m_{S^*(t; \omega)}(t; \omega) = \frac{B}{c_{\ex}+\sqrt{\frac{c_{\ex}k_{1,t}(S^*(t; \omega); \omega)}{k_{2,t}(S^*(t; \omega);\omega)}}}\stackrel{\eqref{good}}{\geq}\frac{B}{c_{\ex}+\sqrt{\frac{c_{\ex}L(\omega)}{\alpha_{m(B; \omega)}}}}
\end{align*} 
Note that $m(B;\omega)$ is nondecreasing in $B$. If $m(B;\omega)$ did not diverge, then there would exist an integer $C>0$ such that $\sup_{B}m(B;\omega)<C$. Meanwhile, this also suggests that $\inf_B\alpha_{m(B;\omega)}>\alpha_C>0$, forcing the right-hand side of the above inequality to diverge; hence $m(B;\omega)$. A contradiction.
Thus, 
\begin{align}
&\lim_{B\to\infty}m(B; \omega)=\infty. 
\label{ksl1}
\end{align}
To show \eqref{simkey}, note that $\widehat{\Sigma}_S(t)$ converges to $\Sigma_S$ almost surely due to the strong law of large numbers. 
For $\widehat{\beta}_S$, the sub-exponential assumption on the distribution of $X_S$ for all $S\subseteq [n]$ (condition \eqref{...}) ensures \eqref{cov} (with $m$ replaced by $t$) for all $S\subseteq [n]$, which combined with the Borel-Cantelli lemma implies that with probability 1, $\lambda_{\min, S}(t)\to\infty$ and
\begin{align*}
&\log\lambda_{\max, S}(t) = o(\lambda_{\min, S}(t))&\forall S\subseteq [n],
\end{align*} 
where $\lambda_{\max, S}(t)$ and $\lambda_{\min, S}(t)$ are respectively the largest and smallest eigenvalue of $Z_S^TZ_S$. 
Appealing to \cite[Theorem 1]{Lai_1982}, with probability 1, $\widehat{\beta}_S(t)\to\beta_S$ as $t\to\infty$ for all $S\subseteq [n]$. 
Thus we have proved \eqref{simkey}.
 
We now work with a fixed realization $\omega$ along which $m(B; \omega)\to\infty$ as $B\to\infty$, and all estimators in \eqref{e1} converge to the true parameters as $t\to\infty$. 
Recall the empirical MSE estimated for model $S$ in round $t$, as a function of the exploration round $m$:
\begin{align*}
\widehat{\overline{\mse}}_S|_{\widehat{\beta}_S}(m;t) = \frac{k_{1,t}(S)}{B-c_{\ex}m} + \frac{k_{2,t}(S)}{m}.
\end{align*}
Take $\delta<1/2$ sufficiently small. 
Since \eqref{>} is assumed to have a unique minimizer, by consistency and a continuity argument, there exists a sufficiently large $T(\delta; \omega)$ so that, for all $t\geq T(\delta, \omega)$,  
\begin{align}
\max_{(1-\delta)m_{S^*}\leq m\leq (1+\delta)m_{S^*} }\widehat{\overline{\mse}}_{S^*}|_{\widehat{\beta}_{S^*}}(m;t)&<\min_{S\subseteq [n], S\neq S^*}\min_{0<m<B/c_\ex}\widehat{\overline{\mse}}_S|_{\widehat{\beta}_S}(m;t)\label{l1}\\
1-\delta &\leq\frac{m_S(t; \omega)}{m_S}\leq 1+\delta\ \ \ \ \ \ \ \ \ \forall S\subseteq [n].\label{l2} 
\end{align}
Since $m_{S}$ scales linearly in $B$ and $m(B;\omega)$ diverges as $B\uparrow\infty$, there exists a sufficiently large $B(\delta;\omega)$ such that for $B>B(\delta;\omega)$, 
\begin{align}
\min\left\{\min_{S\subseteq [n]}m_{S}, m(B;\omega)\right\} > 2T(\delta;\omega).\label{mkj}
\end{align}
\eqref{l2}, \eqref{mkj} and $\delta<1/2$ together imply that, at $T(\omega)$, for all $S\subseteq [n]$,
\begin{align}
m_S(T(\delta;\omega); \omega)\stackrel{\eqref{l2}, \delta<1/2}{\geq} \frac{m_S}{2}\stackrel{\eqref{mkj}}{>}T(\delta; \omega).\label{bozhong}
\end{align}
Combining \eqref{bozhong} with \eqref{l1} yields that, at $T(\omega)$, the algorithm chooses $S^*$ as the optimal model, and the corresponding estimated optimal exploration rate is larger than $T(\delta;\omega)$.
By the design of our algorithm, more exploration is needed, and \eqref{l1} and \eqref{l2} further ensure that $S^*$ will be chosen in the subsequent exploration until the stopping criterion is met, proving \eqref{rr1}.
Moreover, when the algorithm stops,  
\begin{align*}
1-\delta \leq \frac{m(B; \omega)}{m_{S^*}}\leq 1+\delta +\frac{1}{m_{S^*}}.
\end{align*}
Taking $B\to\infty$ followed by $\delta\to 0$ yields \eqref{rr2}, finishing the proof.  


\section{Proof of Theorem \ref{thm:consv}}\label{ap:3}
In this section, we provide some omitted details of the analysis in the case of vector-valued high-fidelity models. 
We first complete the proof of Theorem \ref{thm:consv}, which establishes a similar nonasymptotic convergence rate of the LRMC estimator under the $Q$-risk. Then, we give a detailed description of AETC algorithm for vector-valued high-fidelity models, and examine the estimation efficiency in the exploration phase. We end this section by providing a numerical example where very high-dimensional high-fidelity output is considered. 

For simplicity, we assume that noises $\e^{(1)}_{S}, \cdots, \e^{(k_0)}_{S}$ are jointly Gaussian; the sub-Gaussian case can be discussed similarly using analogous concentration inequalities. 
 
 \subsection{Proof of Theorem \ref{thm:consv}}
Let $ r_S = \rank(\Gamma_S)$ and $Q\in\R^{q\times k_0}$.
Similar to the calculation in \eqref{exp1}, we first rewrite $\risk_S|_{\widehat{\beta}_S}$ as 
\begin{align}
\risk_S|_{\widehat{\beta}_S} &= \frac{1}{N_S}\tr(\Sigma_S\widehat{\beta}_S^TQ^TQ\widehat{\beta}_S) + x_S^T(\widehat{\beta}_S-\beta_S)^TQ^TQ(\widehat{\beta}_S-\beta_S)x_S\nonumber\\
&\leq \frac{2}{N_S}\left[\tr(\Sigma_S\beta_S^TQ^TQ\beta_S)+\tr(\Sigma_S(\widehat{\beta}_S-\beta_S)^TQ^TQ(\widehat{\beta}_S-\beta_S))\right]\nonumber\\
&\ \ \  + x_S^T(\widehat{\beta}_S-\beta_S)^TQ^TQ(\widehat{\beta}_S-\beta_S)x_S.\label{002}
\end{align}
Conditional on $Z_S$,
\begin{align}
Q(\widehat{\beta}_S-\beta_S) = Q(\e_{S,1}, \cdots, \e_{S, m})(Z_S^\dagger)^T,\label{haoo}
\end{align}
where $\e_{S,\ell}\sim\mathcal N(0,\Gamma_S)$ are i.i.d. random vectors representing the noise vector in the $\ell$-th exploration sample. 
Since $r_S = \rank(\Gamma_S)$, by the properties of multivariate normal distributions, there exists a $P_S\in\R^{k_0\times r_S}$ such that \begin{align}
&\e_{S,\ell} = P_S\xi_{S, \ell}&P_SP_S^T = \Gamma_S\label{lkj}
\end{align}
where $\xi_{S,\ell}$ are i.i.d. normal distributions $\mathcal N(0, I_{r_S})$. 
Thus, 
\begin{align}
(\widehat{\beta}_S-\beta_S)^TQ^TQ(\widehat{\beta}_S-\beta_S)=Z_S^\dagger(\xi_{S,1}, \cdots, \xi_{S, m})^TP_S^TQ^TQP_S(\xi_{S,1}, \cdots, \xi_{S, m})(Z_S^\dagger)^T.\label{haha}
\end{align}
Note that $(\xi_{S,1}, \cdots, \xi_{S, m})$ is a Gaussian random matrix, and multiplying it on the left by a unitary matrix yields a matrix whose rows are i.i.d. $\mathcal N(0, I_m)$. 
Thus, taking the eigendecomposition $P_S^TQ^TQP_S = V\text{diag}(\lambda_1, \cdots, \lambda_{r_S})V^*$ and plugging it into \eqref{haha} yields   
\begin{align}
(\widehat{\beta}_S-\beta_S)^TQ^TQ(\widehat{\beta}_S-\beta_S)^T &= \sum_{i=1}^{r_S}\lambda_iZ_S^\dagger g_ig_i^T(Z_S^\dagger)^T\label{hap}
\end{align}
where $g_i$ are i.i.d. $\mathcal N(0, I_m)$. 
Substituting \eqref{hap} into \eqref{002} yields
\begin{align*}
\risk_S|_{\widehat{\beta}_S}\leq \frac{2}{N_S}\left(\tr(\Sigma_S\beta_S^TQ^TQ\beta_S)+\|\Sigma_S\|_2\sum_{i=1}^{r_S}\lambda_i\|Z_S^\dagger g_i\|_2^2\right)+ \sum_{i=1}^{r_S}\lambda_i\|B_S g_i\|_2^2,
\end{align*}
where $B_S$ is as defined in \eqref{exp1}. 
For $\|Z_S^\dagger g_i\|_2^2$ and $\|B_Sg_i\|_2^2$, the proof of Theorem \ref{thm:conss} tells us that for large $m$, with probability at least $1-m^{-2}$, 
\begin{align}
&\|Z_S^\dagger g_i\|_2^2\lesssim \frac{\log m}{m} = o(1)&\|B_Sg_i\|_2^2\lesssim (s+1)\frac{\log m}{m}\ \ \ \ \ \ \ \ \ \ \ \forall i\in [r_S].\label{xuemama}
\end{align}  
The proof is complete by observing $\sum_{i=1}^{r_S}\lambda_i = \tr(P_S^TQ^TQP_S) = \tr(Q\Gamma_SQ^T)$.

\subsection{AETC for vector-valued high-fidelity models}
We now describe the analog of Algorithm \ref{alg:aETC}. 
The only modification occurs when we estimate the model variance structure. 
Instead of estimating the model variance $\sigma^2_S$ in \eqref{e1}, we need to estimate the model covariance matrix $\Gamma_S$, which can be computed as the sample covariance of the residuals:
\begin{align}
\widehat{\Gamma}_S(t) = \frac{1}{t-s-1}\sum_{\ell\in [t]}\left(f(Y_\ell) - \widehat{\beta}_SX_{\ex,\ell}|_S \right)\left(f(Y_\ell) - \widehat{\beta}_SX_{\ex,\ell}|_S\right)^T. \label{rep-noise}
\end{align}

Similar empirical estimates in \eqref{0001} can be defined as 
\begin{align}
\tilde{k}_{1,t}(S) &=c_{\ext}(S)\tr(\widehat{\Sigma}_S(t)\widehat{\beta}_S^T(t)Q^TQ\widehat{\beta}_S(t))\label{<<<}\\
\tilde{k}_{2,t}(S) &= \tr(Q\widehat{\Gamma}_S(t)Q^T)\tr(\widehat{x}_S(t)\widehat{x}_S^T(t)\widehat{\Lambda}_S^{-1}(t))\nonumber
\end{align}
and
\begin{align}
&\tilde{m}_S(t) = \frac{B}{c_{\ex}+\sqrt{\frac{c_{\ex}\tilde{k}_{1,t}(S)}{\tilde{k}_{2,t}(S)}}}, \ \ \ \widehat{\overline{\risk}}_S|_{\widehat{\beta}_S}(m;t) = \frac{\tilde{k}_{1,t}(S)}{B-c_{\ex}m}+ \frac{\tilde{k}_{2,t}(S)}{m}\label{00011},\\
&\overline{\risk}_S^*|_{\widehat{\beta}_S}(t) = \frac{\left(\sqrt{\tilde{k}_{1,t}(S)}+\sqrt{c_{\ex}\tilde{k}_{2,t}(S)}\right)^2}{B}.\label{00010}
\end{align}
The analog of Algorithm \ref{alg:aETC} for vector-valued high-fidelity models can be summarized as follows:
\medskip

\begin{algorithm}[H]
\hspace*{\algorithmicindent} \textbf{Input}: $B$: total budget, $c_i$: cost parameters, $\alpha_t\downarrow 0$: regularization parameters \\
    \hspace*{\algorithmicindent} \textbf{Output}: $(\pi_t)_t$
 \begin{algorithmic}[1]
\STATE compute the maximum exploration round $M = \lfloor B/c_{\ex}\rfloor$
   \FOR{$t \in [n+2]$}{
 \STATE $\pi_t = a_{\ex}([n])$
 }
\ENDFOR
 \WHILE{$n+2\leq t\leq M$}
 \FOR{$S\subseteq [n]$}{
 \STATE{compute $\tilde{k}_{1,t}(S), \tilde{k}_{2,t}(S)$ using \eqref{e1}, \eqref{rep-noise} and \eqref{<<<}, and set $\tilde{k}_{2,t}(S)\gets \tilde{k}_{2,t}(S) +\alpha_t$}
 \STATE{compute $\tilde{m}_S(t)$ using \eqref{00011}}
 \STATE{compute the optimal $Q$-risk using \eqref{00011}: $h_S(t) =\widehat{\overline{\risk}}_S|_{\widehat{\beta}_S}(\tilde{m}_S(t)\vee t; t)$}
 }
 \ENDFOR
 \STATE {find the optimal model $S^*(t) = \argmin_{S\subseteq [n]}h_S(t)$}
\IF{$\tilde{m}_{S^*(t)}(t)>t$}
        \STATE $\pi_{t+1} = a_{\ex}([n])$ and $t \gets t + 1$
    \ELSE
        \STATE  $\pi_{t+1} = a_{\ext}(S^*(t))$ and $t \gets M+1$
     \ENDIF
 \ENDWHILE
 \end{algorithmic}
\caption{AETC algorithm for multifidelity approximation (vector-valued case)} 
\label{alg:aETC_mul}
\end{algorithm}

Applying a similar argument as the proof of Theorem \ref{thm:AETC}, one can show that the exploitation model produced by Algorithm \ref{alg:aETC_mul} converges almost surely to
\begin{align}
\tilde{S}^* =\argmin_{S\subseteq [n]} \left(\sqrt{\tilde{k}_{1,t}(S)}+\sqrt{c_{\ex}\tilde{k}_{2,t}(S)}\right)^2\label{><}
\end{align}
with optimal exploration as $B\to\infty$, where
\begin{align*}
\tilde{k}_1(S) &=c_{\ext}(S)\tr(\Sigma_S(t)\beta_S^T(t)Q^TQ\beta_S(t))\\
\tilde{k}_2(S) &= \tr(Q\Gamma_S(t)Q^T)\tr(x_S(t)x_S^T(t)\Lambda_S^{-1}(t)).
\end{align*}

\subsection{Efficient estimation}
For large $k_0$, both $\beta_S$ and $\Gamma_S$ are high-dimensional, which may not admit a good global estimation for small exploration rate $m$. However, the parameters $\tilde{k}_1(S)$ and $\tilde{k}_2(S)$ used in decision-making are scalar-valued and only involve marginals of the high-dimensional parameters. 
In the following theorems, we will justify that relative accuracy of the plug-in estimators for both quantities defined in \eqref{<<<} is independent of $k_0$ under suitable assumptions.

\begin{Th}\label{thm:est1}  
Under the same condition as Theorem \ref{thm:consv} and for large $t> \max\{s+1,5\}$, it holds with probability at least $1-t^{-2}$ that 
\begin{align}
\frac{\left |\tr(Q\widehat{\Gamma}_S(t)Q^T) - \tr(Q\Gamma_SQ^T)\right |}{\tr(Q\Gamma_SQ^T)}&\lesssim \frac{\log t}{\sqrt{t-s-1}},\label{0a}
\end{align}
where the implicit constant in $\lesssim$ is universal. 
\end{Th}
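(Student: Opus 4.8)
The plan is to reduce $\tr(Q\widehat{\Gamma}_S(t)Q^T)$ to a weighted sum of chi-squared variables and then apply a single Gaussian quadratic-form concentration bound whose constants do not see the ambient dimension $k_0$. First I would record that the least-squares residuals are $R = (I-H)\mathbf{Y} = (I-H)E$, where $H = Z_S(Z_S^TZ_S)^{-1}Z_S^T$ is the rank-$(s+1)$ projection onto the column space of $Z_S$, $\mathbf{Y}$ is the stacked response matrix, and $E = (\e_{S,1},\dots,\e_{S,t})^T$ is the noise matrix; the deterministic part $Z_S\beta_S^T$ drops out because $(I-H)Z_S = 0$. Consequently $\widehat{\Gamma}_S(t) = \frac{1}{t-s-1}E^T(I-H)E$ and $\tr(Q\widehat{\Gamma}_S(t)Q^T) = \frac{1}{t-s-1}\tr((I-H)EQ^TQE^T)$.

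Next I would feed in the Gaussian factorization $\e_{S,\ell} = P_S\xi_{S,\ell}$ from \eqref{lkj}, so $E = \Xi P_S^T$ with $\Xi\in\R^{t\times r_S}$ having i.i.d.\ $\mathcal N(0,1)$ entries and independent of $Z_S$ (the noise is independent of the regressors). Diagonalizing the PSD matrix $M := P_S^TQ^TQP_S = V\,\mathrm{diag}(\lambda_1,\dots,\lambda_{r_S})\,V^*$ and absorbing the unitary $V$ into $\Xi$ (which preserves the i.i.d.\ standard normal law of its columns $g_1,\dots,g_{r_S}\in\R^t$), I obtain the clean representation $\tr(Q\widehat{\Gamma}_S(t)Q^T) = \frac{1}{t-s-1}\sum_{i=1}^{r_S}\lambda_i\, g_i^T(I-H)g_i$. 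Since $\tr(Q\Gamma_SQ^T) = \tr(M) = \sum_i\lambda_i$, the relative error on the left of \eqref{0a} becomes the weighted average $\sum_{i=1}^{r_S} w_i(W_i-1)$ with probability weights $w_i = \lambda_i/\sum_j\lambda_j \ge 0$, $\sum_i w_i = 1$, and $W_i = \frac{g_i^T(I-H)g_i}{t-s-1}$; each $W_i$ is, conditionally on $Z_S$, a normalized $\chi^2_{t-s-1}$ with mean $1$.

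The crucial step, and the place where dimension-independence must be earned, is to bound $|\sum_i w_i(W_i-1)|$ uniformly in $r_S$ (hence in $k_0$). Conditioning on $Z_S$ having full rank, which holds almost surely, the quantity $T := \sum_i w_i g_i^T(I-H)g_i$ is a Gaussian quadratic form in the stacked vector $\mathbf{g} = (g_1^T,\dots,g_{r_S}^T)^T$ with matrix $A = \mathrm{diag}(w_1,\dots,w_{r_S})\otimes(I-H)$. Its nonzero spectrum consists of the $w_i$, each repeated $t-s-1$ times, so $\E[T] = \tr(A) = t-s-1$, $\|A\|_F^2 = (t-s-1)\sum_i w_i^2 \le t-s-1$, and $\|A\|_2 = \max_i w_i \le 1$; here $\sum_i w_i = 1$ is exactly what keeps all three quantities free of $r_S$. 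The main obstacle to anticipate is that a naive term-by-term union bound over the $r_S$ chi-squared summands would inject a $\log r_S$ (hence $k_0$) dependence, defeating the whole point of efficient estimation; treating $T$ as a single quadratic form is what circumvents this.

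To finish, I would apply the Hanson--Wright inequality \cite[Theorem 6.3.2]{Vershynin_2018} (equivalently the Laurent--Massart $\chi^2$ tail $\P(|T-\E T|\ge 2\|A\|_F\sqrt{x}+2\|A\|_2 x)\le 2e^{-x}$) with $x\asymp\log t$ calibrated to failure probability $t^{-2}$, giving $|T - (t-s-1)| \lesssim \sqrt{(t-s-1)\log t} + \log t$. Dividing by $\E[T]=t-s-1$ yields a relative error $\lesssim \sqrt{\tfrac{\log t}{t-s-1}} \le \tfrac{\log t}{\sqrt{t-s-1}}$ for $t > \max\{s+1,5\}$, where the threshold $5$ ensures $t-s-1\ge\log t$ so that the first term dominates and the stated weaker bound holds. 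Since this conditional estimate depends on $Z_S$ only through the rank of $I-H$, it is uniform over all full-rank designs, so integrating out the almost surely full-rank conditioning on $Z_S$ completes the proof.
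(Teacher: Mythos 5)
Your proposal is correct and is essentially the paper's own proof: both arguments write the residual matrix as $(I-H)E$ with $H=Z_SZ_S^\dagger$, factor the Gaussian noise as $\e_{S,\ell}=P_S\xi_{S,\ell}$, collapse $\tr(Q\widehat{\Gamma}_S(t)Q^T)$ into a \emph{single} Gaussian quadratic form, apply Hanson--Wright once, and obtain dimension-independence from the PSD inequality $\|P_S^TQ^TQP_S\|_F\le\tr(P_S^TQ^TQP_S)$ (in your normalization, $\sum_i w_i^2\le 1$). The only difference is organizational: the paper removes the random projection $I-H$ first, via rotational invariance, so that the quadratic form has the deterministic matrix $I_{t-s-1}\otimes P_S^TQ^TQP_S$, whereas you keep $I-H$, diagonalize $P_S^TQ^TQP_S$, and condition on $Z_S$, noting that the spectrum of $\mathrm{diag}(w_1,\dots,w_{r_S})\otimes(I-H)$ is deterministic; since the two matrices have identical nonzero spectra, this is the same estimate. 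One small blemish: your parenthetical claim that $t>\max\{s+1,5\}$ ensures $t-s-1\ge\log t$ is false when $s$ is large (e.g., $t=s+2$), but it is also unnecessary, since each tail term is dominated separately, $\sqrt{\log t/(t-s-1)}\le \log t/\sqrt{t-s-1}$ for $t\ge 3$ and $\log t/(t-s-1)\le \log t/\sqrt{t-s-1}$ always, so your conclusion stands.
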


For the estimation of $\tr(Q\beta_S\beta_S^TQ)$, for convenience we consider a model $S$ without intercept:

\begin{Th}\label{thm:est}  
Assume that \eqref{...} holds for the $2$-Orlicz norm, and
\begin{align}
\frac{\tr(Q\Gamma_SQ^T)}{\tr(Q\beta_S\beta_S^TQ)}\lesssim\mathcal O(1). \label{last}
\end{align}
Suppose that $S$ does not include the intercept term and the corresponding covariance matrix $\Sigma_S$ is nonsingular. 
Under the same condition as Theorem \ref{thm:consv} and for large $t$, it holds with probability at least $1-t^{-2}$ that 
\begin{align}
\frac{\left|\tr(\widehat{\Sigma}_S(t)\widehat{\beta}_S^T(t)Q^TQ\widehat{\beta}_S(t))-\tr(\Sigma_S\beta_S^TQ^TQ\beta_S)\right|}{\tr(\Sigma_S\beta_S^TQ^TQ\beta_S)}&\lesssim \kappa(\Sigma_S)\sqrt{\frac{\log t}{t}},\label{0b}
\end{align}
where $\kappa(\Sigma_S)$ is the condition number of $\Sigma_S$, and the implicit constant is independent of $t$ and $k_0$.  
\end{Th}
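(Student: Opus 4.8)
The plan is to bound the relative error by separating the contribution of the covariance estimate $\widehat{\Sigma}_S(t)$ from that of the coefficient estimate $\widehat{\beta}_S(t)$. Writing $M=Q^TQ$ and inserting the intermediate term $\tr(\Sigma_S\widehat{\beta}_S^TM\widehat{\beta}_S)$, I would decompose
\begin{align*}
\tr(\widehat{\Sigma}_S\widehat{\beta}_S^TM\widehat{\beta}_S)-\tr(\Sigma_S\beta_S^TM\beta_S)=\underbrace{\tr\!\big((\widehat{\Sigma}_S-\Sigma_S)\widehat{\beta}_S^TM\widehat{\beta}_S\big)}_{(\mathrm I)}+\underbrace{\tr\!\big(\Sigma_S(\widehat{\beta}_S^TM\widehat{\beta}_S-\beta_S^TM\beta_S)\big)}_{(\mathrm{II})},
\end{align*}
and estimate each relative to the denominator $\tr(\Sigma_S\beta_S^TM\beta_S)$ separately. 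I expect term $(\mathrm I)$ to be responsible for the stated rate and term $(\mathrm{II})$ to be of strictly lower order.

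For $(\mathrm I)$, since $\widehat{\beta}_S^TM\widehat{\beta}_S\succeq 0$, the inequality $|\tr(CB)|\le\|C\|_2\tr(B)$ (valid for symmetric $C$ and $B\succeq 0$) gives $|(\mathrm I)|\le\|\widehat{\Sigma}_S-\Sigma_S\|_2\,\tr(\widehat{\beta}_S^TM\widehat{\beta}_S)$. Because $S$ carries \emph{no intercept}, $\Sigma_S\succeq\sigma_{\min}(\Sigma_S)I$ is nonsingular, so the denominator satisfies $\tr(\Sigma_S\beta_S^TM\beta_S)\ge\sigma_{\min}(\Sigma_S)\tr(\beta_S^TM\beta_S)$; together with $\widehat{\beta}_S\to\beta_S$ this reduces the relative error of $(\mathrm I)$ to $\lesssim\|\widehat{\Sigma}_S-\Sigma_S\|_2/\sigma_{\min}(\Sigma_S)$. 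I would then invoke sub-Gaussian sample-covariance concentration---the precise reason the theorem upgrades \eqref{...} to the $2$-Orlicz norm, which sharpens the $\log^5 t/\sqrt t$ rate of \eqref{cov} to $\|\widehat{\Sigma}_S-\Sigma_S\|_2\lesssim\|\Sigma_S\|_2\sqrt{\log t/t}$ (for fixed $s$, with probability $1-t^{-2}$). The ratio $\|\Sigma_S\|_2/\sigma_{\min}(\Sigma_S)=\kappa(\Sigma_S)$ then yields precisely $\kappa(\Sigma_S)\sqrt{\log t/t}$; this is where both the condition number and the rate originate.

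For $(\mathrm{II})$, writing $\Delta=\widehat{\beta}_S-\beta_S$ and expanding gives $\widehat{\beta}_S^TM\widehat{\beta}_S-\beta_S^TM\beta_S=\Delta^TM\beta_S+\beta_S^TM\Delta+\Delta^TM\Delta$, so in the $\Sigma_S$-weighted Frobenius geometry with $A=Q\beta_S\Sigma_S^{1/2}$ and $B=Q\Delta\Sigma_S^{1/2}$ one has $(\mathrm{II})=2\langle A,B\rangle_F+\|B\|_F^2$ over denominator $\|A\|_F^2$, and Cauchy--Schwarz reduces the relative error to $2\|B\|_F/\|A\|_F+(\|B\|_F/\|A\|_F)^2$. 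To estimate $\|B\|_F^2=\tr(\Sigma_S\Delta^TM\Delta)$ I would reuse the Gaussian representation \eqref{lkj}--\eqref{hap}, which writes it as $\sum_{i=1}^{r_S}\lambda_i\,g_i^T(Z_S^\dagger)^T\Sigma_S Z_S^\dagger g_i$ with $g_i$ i.i.d.\ $\mathcal N(0,I_t)$ and $\sum_i\lambda_i=\tr(Q\Gamma_SQ^T)$. Using \eqref{cov}--\eqref{emm} to replace $(Z_S^TZ_S)^{-1}$ by $\tfrac1t\Lambda_S^{-1}$ gives $\|B\|_F^2\lesssim\tr(Q\Gamma_SQ^T)\,\tr(\Sigma_S\Lambda_S^{-1})/t$, after which assumption \eqref{last}, together with $\tr(\Sigma_S\beta_S^TM\beta_S)\ge\sigma_{\min}(\Sigma_S)\tr(\beta_S^TM\beta_S)$ and $\tr(\Sigma_S\Lambda_S^{-1})\le s$, bounds $\|B\|_F^2/\|A\|_F^2\lesssim s/(t\,\sigma_{\min}(\Sigma_S))$, which for large $t$ is dominated by the contribution of $(\mathrm I)$.

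The main obstacle is ensuring the constant in $(\mathrm{II})$ is independent of the response dimension $k_0$: the naive route of union-bounding over the $r_S\le k_0$ Gaussian blocks $g_i$ would introduce a spurious $\log k_0$. I would avoid this by applying the Hanson--Wright inequality \emph{once} to the single block-diagonal quadratic form in $(g_1,\dots,g_{r_S})$, whose concentration depends only on $\tr(Q\Gamma_SQ^T)$ and on norms of $(Z_S^\dagger)^T\Sigma_S Z_S^\dagger$ rather than on $r_S$; assumption \eqref{last} is then exactly what converts the resulting $\tr(Q\Gamma_SQ^T)$ into a $k_0$-free multiple of the signal $\tr(\beta_S^TM\beta_S)$. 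Collecting the estimates for $(\mathrm I)$ and $(\mathrm{II})$ on the common high-probability event yields \eqref{0b} with a constant independent of $t$ and $k_0$.
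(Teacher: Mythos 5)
Your proposal is correct and follows the same overall strategy as the paper's proof: split the error into a covariance-estimation piece and a coefficient-estimation piece, control the former via sub-Gaussian sample-covariance concentration (you correctly identify this as the reason for upgrading \eqref{...} to the $2$-Orlicz norm), control the latter through the Gaussian representation \eqref{lkj}--\eqref{hap} together with a single application of Hanson--Wright (which is indeed how the $k_0$-independence is preserved), invoke \eqref{last} to trade $\tr(Q\Gamma_SQ^T)$ for $\tr(\beta_S^TQ^TQ\beta_S)$, and extract $\kappa(\Sigma_S)$ by playing $\|\Sigma_S\|_2$ in the numerator against $\tr(\Sigma_S\beta_S^TQ^TQ\beta_S)\geq\sigma_{\min}(\Sigma_S)\tr(\beta_S^TQ^TQ\beta_S)$ in the denominator. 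The differences are tactical: you insert the intermediate term $\tr(\Sigma_S\widehat{\beta}_S^TQ^TQ\widehat{\beta}_S)$ where the paper inserts $\tr(\widehat{\Sigma}_S\beta_S^TQ^TQ\beta_S)$, and for the coefficient piece you work in the $\Sigma_S$-weighted Frobenius geometry with $A=Q\beta_S\Sigma_S^{1/2}$, $B=Q(\widehat{\beta}_S-\beta_S)\Sigma_S^{1/2}$, which is arguably cleaner than the paper's column-by-column expansion of $\widehat{\beta}_S^TQ^TQ\widehat{\beta}_S-\beta_S^TQ^TQ\beta_S$ (the paper pays extra factors of $s$ there).

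Two small repairs. First, in term $(\mathrm I)$ the appeal to ``$\widehat{\beta}_S\to\beta_S$'' should be made quantitative: you need $\tr(\widehat{\beta}_S^TQ^TQ\widehat{\beta}_S)\lesssim\tr(\beta_S^TQ^TQ\beta_S)$ uniformly in $t$ and $k_0$, which follows from $\|Q(\widehat{\beta}_S-\beta_S)\|_F\lesssim\sqrt{\tr(Q\Gamma_SQ^T)\log t/t}$ together with \eqref{last} (this is exactly the paper's step \eqref{00003}); consistency alone does not control the constant. Second, your route through $\tr(\Sigma_S\Lambda_S^{-1})\leq s$ leaves the contribution of $(\mathrm{II})$ as $\sqrt{s/\sigma_{\min}(\Sigma_S)}\,\sqrt{\log t/t}$, whose prefactor is not a function of $\kappa(\Sigma_S)$ alone; this still proves the theorem as stated (the implicit constant need only be independent of $t$ and $k_0$), but to get the stated form directly, bound $\|B\|_F^2=\tr(\Sigma_S(\widehat{\beta}_S-\beta_S)^TQ^TQ(\widehat{\beta}_S-\beta_S))\leq\|\Sigma_S\|_2\|Q(\widehat{\beta}_S-\beta_S)\|_F^2$ and apply \eqref{last}, which gives $\|B\|_F^2/\|A\|_F^2\lesssim\kappa(\Sigma_S)\log t/t$ and hence, since $\kappa(\Sigma_S)\geq 1$, a cross term $\lesssim\kappa(\Sigma_S)\sqrt{\log t/t}$. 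Relatedly, note that this cross term $2\|B\|_F/\|A\|_F$ is of the same $t^{-1/2}$ order as $(\mathrm I)$, not of strictly lower order; only $(\|B\|_F/\|A\|_F)^2$ is.
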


\begin{Rem}
Under additional assumption on the Frobenius norm of $Q\beta_S$ and its submatrices, 
similar results can be obtained for $S$ containing the intercept, with $\kappa(\Sigma_S)$ replaced by $\kappa(M_{1,1}(\Sigma_{S}))$, where $M_{i,j}(\cdot )$ denotes the $(i,j)$ minor of a matrix.  
\end{Rem}

\begin{Rem}
The constant $\kappa(\Sigma_S)$ in \eqref{0b}, despite depending on the singular values of $\Sigma_S$, is independent of $k_0$.  This combined with \eqref{0a} implies that the estimation in Algorithm \ref{alg:aETC_mul} is relatively efficient regardless of the response dimension $k_0$. 
\end{Rem}

\begin{proof}[Proof of Theorem \ref{thm:est1}]
Rewriting \eqref{rep-noise} using \eqref{haoo} and \eqref{newass}, we have
\begin{align}
Q\widehat{\Gamma}_S(t)Q^T &= \frac{1}{t-s-1}Q(\e_{S,1}, \cdots, \e_{S, t})(I_t - Z_SZ_S^\dagger)(\e_{S,1}, \cdots, \e_{S, t})^TQ^T\nonumber\\
&\stackrel{\eqref{lkj}}{ = }\frac{1}{t-s-1}QP_S(\xi_{S,1}, \cdots, \xi_{S, t})(I_t - Z_SZ_S^\dagger)(\xi_{S,1}, \cdots, \xi_{S, t})^TP^T_SQ^T.\label{sleep}
\end{align}
Note that $I_t - Z_SZ_S^\dagger$ is a $(t-s-1)$-dimensional (random) orthogonal projection matrix, i.e., $I_t - Z_SZ_S^\dagger = UU^T$ for some $U\in\R^{t\times (t-s-1)}$ with orthonormal columns. 
Thus, by the rotational invariance of multivariate normal distributions,
\begin{align}
&(\xi_{S,1}, \cdots, \xi_{S, t})(I_t - Z_SZ_S^\dagger)(\xi_{S,1}, \cdots, \xi_{S, t})^T\nonumber\\
\stackrel{\mathcal D}{=}&\ (\zeta_{S,1}, \cdots, \zeta_{S, t-s-1})(\zeta_{S,1}, \cdots, \zeta_{S, t-s-1})^T,\label{cry}
\end{align}
where $\zeta_{S,\ell}, \ell\in [t-s-1]$ are (fixed) i.i.d. normal vectors $\mathcal N(0, I_{r_S})$, and $\stackrel{\mathcal D}{=}$ denotes equality in distribution. 
Substituting \eqref{cry} into \eqref{sleep} and taking the trace yields
\begin{align*}
\tr(Q\widehat{\Gamma}_S(t)Q^T)&\stackrel{\mathcal D}{=} \frac{1}{t-s-1}\tr(QP_S(\zeta_{S,1}, \cdots, \zeta_{S, t-s-1})(\zeta_{S,1}, \cdots, \zeta_{S, t-s-1})^TP^T_SQ^T)\\
& = \frac{1}{t-s-1}\tr((\zeta_{S,1}, \cdots, \zeta_{S, t-s-1})^TP^T_SQ^TQP_S(\zeta_{S,1}, \cdots, \zeta_{S, t-s-1}))\\
&\stackrel{}{=}\frac{1}{t-s-1}\sum_{i=1}^{t-s-1}\zeta_{S,i}^TP_S^TQ^TQP_S\zeta_{S,i}\\
& = \frac{1}{t-s-1}\bm\zeta_{S,t-s-1}^T\bm\Omega_{t-s-1}\bm\zeta_{S,t-s-1}
\end{align*}
where $\bm {\zeta}_{S,t-s-1} = (\zeta_{S,1}^T, \cdots, \zeta_{S,t-s-1}^T)^T$ is a standard multivariate normal vector in $\R^{r_S(t-s-1)}$ and
\begin{align}
\bm\Omega_{t-s-1} = \underbrace{\begin{pmatrix}
P_S^TQ^TQP_S\\
&&\ddots\\
&&&P_S^TQ^TQP_S\\
\end{pmatrix}}_{\text{$t-s-1$ diagonal blocks}}
\end{align}
Apply the Hanson-Wright inequality \cite[Theorem 6.2.1]{Vershynin_2018} to $\bm\zeta_{S,t-s-1}^T\bm\Omega_{t-s-1}\bm\zeta_{S,t-s-1}$ and we yield that for every $t>s+1$ and $\delta\geq 4\|\bm\Omega_{t-s-1}\|_F$, 
\begin{align*}
&\P\left(|\bm\zeta_{S,t-s-1}^T\bm\Omega_{t-s-1}\bm\zeta_{S,t-s-1} - (t-s-1)\tr(P_S^TQ^TQP_S)|>\delta\right)\\
&\ \leq 2\exp\left(-\frac{C\delta}{4\|\bm\Omega_{t-s-1}\|_F}\right),
\end{align*} 
where $C\leq 1$ is an absolute constant, and $4$ comes from an upper bound for the square of the sub-Gaussian norm of the standard normal distribution. 
For $t>\max\{s+1, 5\}$, taking $$\delta =C^{-1}\|\bm\Omega_{t-s-1}\|_F\log(2t^2)$$ yields that with probability at least $1-t^{-2}$, 
\begin{align*}
\left |\bm\zeta_{S,t-s-1}^T\bm\Omega_{t-s-1}\bm\zeta_{S,t-s-1} - (t-s-1)\tr(P_S^TQ^TQP_S)\right |\leq C^{-1}\log (2t^2)\|\bm\Omega_{t-s-1}\|_F\label{awd}
\end{align*}
Dividing both sides by $(t-s-1)\tr(Q\Gamma_SQ^T)$ and using $\tr(P_S^TQ^TQP_S) = \tr(Q\Gamma_SQ^T)$ finishes the proof of \eqref{0a}:
\begin{align}
\frac{\left |\tr(Q\widehat{\Gamma}_S(t)Q^T) - \tr(Q\Gamma_SQ^T)\right |}{\tr(Q\Gamma_SQ^T)}&\leq \frac{\|\bm\Omega_{t-s-1}\|_F}{\tr(Q\Gamma_SQ^T)\sqrt{t-s-1}}\frac{C^{-1}\log (2t^2)}{\sqrt{t-s-1}}\nonumber\\
& = \frac{\|Q\Gamma_SQ^T\|_F}{\tr(Q\Gamma_SQ^T)}\frac{C^{-1}\log (2t^2)}{\sqrt{t-s-1}}\nonumber\\
& \leq  \frac{3C^{-1}\log t}{\sqrt{t-s-1}}.\label{aiden}
\end{align}
The proof is complete.  
\end{proof}

\begin{proof}[Proof of Theorem \ref{thm:est}]
By the triangle inequality and Cauchy-Schwarz inequality, 
\begin{align}
& \left|\tr(\widehat{\Sigma}_S(t)\widehat{\beta}_S^T(t)Q^TQ\widehat{\beta}_S(t))-\tr(\Sigma_S\beta_S^TQ^TQ\beta_S)\right|\nonumber\\
\leq &\ \left|\tr(\widehat{\Sigma}_S(t)\widehat{\beta}_S^T(t)Q^TQ\widehat{\beta}_S(t))-\tr(\widehat{\Sigma}_S(t)\beta_S^TQ^TQ\beta_S)\right|\nonumber\\
&\ +\left|\tr(\widehat{\Sigma}_S(t)\beta_S^TQ^TQ\beta_S)-\tr(\Sigma_S\beta_S^TQ^TQ\beta_S)\right|\nonumber\\
\leq &\ \|\widehat{\Sigma}_S(t)\|_F\|\widehat{\beta}_S^T(t)Q^TQ\widehat{\beta}_S(t)-\beta_S^TQ^TQ\beta_S\|_F+\|\widehat{\Sigma}_S(t)-\Sigma_S\|_F\|\beta_S^TQ^TQ\beta_S\|_F\nonumber\\
\leq &\ \|\widehat{\Sigma}_S(t)\|_F\|\widehat{\beta}_S^T(t)Q^TQ\widehat{\beta}_S(t)-\beta_S^TQ^TQ\beta_S\|_F+\|\widehat{\Sigma}_S(t)-\Sigma_S\|_F\tr(\beta_S^TQ^TQ\beta_S)\nonumber\\
\leq &\ \sqrt{s}\left(\|\widehat{\Sigma}_S(t)\|_2\|\widehat{\beta}_S^T(t)Q^TQ\widehat{\beta}_S(t)-\beta_S^TQ^TQ\beta_S\|_F+\|\widehat{\Sigma}_S(t)-\Sigma_S\|_2\tr(\beta_S^TQ^TQ\beta_S)\right).\label{mor}
\end{align}
Let $a_i$ and $b_i$ denote the $i$-th column of $Q\beta_S$ and $Q\widehat{\beta}_S(t)$, respectively, i.e., $Q\beta_S = (a_1, \cdots, a_{s})$ and $Q\widehat{\beta}_S(t) = (b_1, \cdots, b_{s})$. Then,
\begin{align*}
&\|\widehat{\beta}_S^T(t)Q^TQ\widehat{\beta}_S(t)-\beta_S^TQ^TQ\beta_S\|_F^2 \\
=&\  \sum_{i, j \in [s]}\left(\langle a_i, a_j\rangle - \langle b_i, b_j\rangle\right)^2\\
=&\ \sum_{i, j \in [s]}\left(\langle a_i-b_i, a_j\rangle +\langle b_i, a_j-b_j\rangle\right)^2\\
\leq &\ 2\sum_{i, j \in [s]}\left(\langle a_i-b_i, a_j\rangle^2 +\langle b_i, a_j-b_j\rangle^2\right)\\
\leq &\ 2\sum_{i, j \in [s]}\left(\|a_i-b_i\|_2^2\|a_j\|_2^2 + \|b_i\|_2^2\|a_j-b_j\|_2^2\right)\\
\leq &\ 2\sum_{i, j \in [s]}\left(\|a_i-b_i\|_2^2\|Q\beta_S\|_2^2 + \|Q\widehat{\beta}_S(t)\|_2^2\|a_j-b_j\|_2^2\right)\\
\leq &\ 4s\left(\|Q\beta_S\|_2+\|Q\widehat{\beta}_S(t)\|_2\right)^2\|Q\beta_S - Q\widehat{\beta}_S(t)\|_F^2.
\end{align*}
Substituting this into \eqref{mor} yields
\begin{align}
&\left|\tr(\widehat{\Sigma}_S(t)\widehat{\beta}_S^T(t)Q^TQ\widehat{\beta}_S(t))-\tr(\Sigma_S\beta_S^TQ^TQ\beta_S)\right|\nonumber\\
\leq&\ 2s\|\widehat{\Sigma}_S(t)\|_2 \left(\|Q\beta_S\|_2+\|Q\widehat{\beta}_S(t)\|_2\right)\|Q\beta_S - Q\widehat{\beta}_S(t)\|_F\nonumber\\
&\ +\sqrt{s}\|\widehat{\Sigma}_S(t)-\Sigma_S\|_2\tr(\beta_S^TQ^TQ\beta_S).\label{feel}
\end{align}
Under the strengthened assumption of \eqref{...}, we can obtain a better bound for the relative error of the sample covariance estimator $\widehat{\Sigma}_S$ by appealing to \cite[Exercise 9.2.5]{Vershynin_2018}:
With probability at least $1-1/2t^2$, $\|\widehat{\Sigma}_S(t)-\Sigma_S\|_2/\|\Sigma_S\|_2\lesssim \sqrt{\log t/t}$, i.e., $\|\Sigma_S\|_2 + \|\widehat{\Sigma}_S(t)\|_2\lesssim \|\Sigma_S\|_2$. 
On the other hand,  
\begin{align}
\|Q\beta_S - Q\widehat{\beta}_S(t)\|_F &= \sqrt{\tr((\widehat{\beta}_S(t)-\beta_S)^TQ^TQ(\widehat{\beta}_S(t)-\beta_S))}\nonumber\\
&\stackrel{\eqref{hap}, \eqref{xuemama}}{\lesssim}\sqrt{\tr(Q\Gamma_SQ^T)}\sqrt{\frac{\log t}{t}}\label{00002xx},
\end{align}
where \eqref{00002xx} holds with probability at least $1-1/2t^2$ for large $t$.
In this case we can verify that,
\begin{align}
\|Q\beta_S\|_2+\|Q\widehat{\beta}_S(t)\|_2&\leq 2\|Q\beta_S\|_F+\|Q\widehat{\beta}_S(t)-Q\beta_S\|_F\\\label{00003}
&\lesssim \|Q\beta_S\|_F\left(2+\sqrt{\frac{\tr(Q\Gamma_SQ^T)}{\tr(Q\beta_S\beta_S^TQ)}\frac{\log t}{t}}\right)\nonumber\\
&\stackrel{\eqref{last}}{\lesssim}\|Q\beta_S\|_F = \sqrt{\tr(\beta_SQ^TQ\beta_S)}\nonumber.
\end{align}
Thus, with probability at least $1-t^{-2}$, 
\begin{align}
&\left|\tr(\widehat{\Sigma}_S(t)\widehat{\beta}_S^T(t)Q^TQ\widehat{\beta}_S(t))-\tr(\Sigma_S\beta_S^TQ^TQ\beta_S)\right|\nonumber\\
\lesssim&\ \|\Sigma_S\|_2\sqrt{\tr(\beta_SQ^TQ\beta_S)}\sqrt{\tr(Q\Gamma_SQ^T)}\sqrt{\frac{\log t}{t}}+\tr(\beta_S^TQ^TQ\beta_S)\|\Sigma_S\|_2\sqrt{\frac{\log t}{t}}\nonumber\\
&\stackrel{\eqref{last}}{\lesssim}\ \|\Sigma_S\|_2\tr(\beta_S^TQ^TQ\beta_S)\sqrt{\frac{\log t}{t}}.\label{layton}
\end{align}
Dividing \eqref{layton} by $\tr(\Sigma_S\beta_S^TQ^TQ\beta_S)$ finishes the proof:
\begin{align*}
\frac{\left|\tr(\widehat{\Sigma}_S(t)\widehat{\beta}_S^T(t)Q^TQ\widehat{\beta}_S(t))-\tr(\Sigma_S\beta_S^TQ^TQ\beta_S)\right|}{\tr(\Sigma_S\beta_S^TQ^TQ\beta_S)}&\lesssim\frac{\|\Sigma_S\|_2\tr(\beta_S^TQ^TQ\beta_S)}{\tr(\Sigma_S\beta_S^TQ^TQ\beta_S)}\sqrt{\frac{\log t}{t}}\\
&\leq\kappa(\Sigma_S)\sqrt{\frac{\log t}{t}}.
\end{align*}
\end{proof}

\section{Numerical simulation of Section \ref{ppde}}\label{app:num-setup-1}

In Section \ref{ppde}, the partial differential equation \eqref{elliptic_pde_stochastic} for a fixed $\bm p$ is frequently solved with the Finite Element Method. We use standard bilinear square isotropic finite elements on a rectangular mesh. After spatial discretization, this results in a linear system in the form of 
\begin{align}\label{eq:fem}
\bm K \bm u = \bm f
\end{align}
where $\bm K$ and $\bm f$ are stiffness matrix and force vector, respectively, and $\bm u$ is the vector of nodal displacements. Our output quantity of interest is the scalar compliance, defined as
\begin{align}\label{eq:compliance}
  \C = \bm u^T \bm K \bm u.
\end{align}

In this example, we form a multifidelity hierarchy through coarsening of the discretization: Consider $n=7$ mesh resolutions with mesh sizes $h = \{1/(2^{8-L})\}_{L=1}^{7}$ where $L$ denotes the level. According to this hierarchy of meshes, the mesh associated with $L=1$ yields the most accurate model (highest fidelity), which is taken as the high-fidelity model in our experiments. The Poisson's ratio is $\nu=0.3$.
The finite element computations in this paper are performed in MATLAB using part of the publicly available code for topology optimization~\cite{Andreassen2011}. 
      
To model the uncertainty, we consider a random field for the elastic modulus via the Karhunen-Lo\'{e}ve (KL) expansion \eqref{KL_numerical}, where $\delta=0.5, E_0=1$ are constants. The random variables $p_i$ are uniformly distributed on $[-1,1]$ and the eigenvalues $\lambda_i$ and basis functions $E_i$ are taken from the analytical expressions for the eigenpairs of an exponential kernel on $D=[0,1]^2$. In one dimension, i.e., $D_1=[0,1]$, the eigenpairs are 
\begin{equation}\label{eigenpair}
\lambda^{1D}_i = \frac{2}{w^2_i+1} \quad b^{1D}_i = A_i (\sin(w_i \bm x)+ w_i \cos (w_i \bm x)) \qquad i \in \mathbb{N}
\end{equation}
where $w_i$ are the positive ordered solutions to
\begin{equation}
\tan(w)=\frac{2w}{w^2-1},
\end{equation}
See, e.g., \cite{Teckentrup2015}. In our simulations, we use the approximation $w_i \approx i \pi$, which is the asymptotic behavior of these solutions. We generate two-dimensional $D=[0,1]^2$ eigenpairs via tensorization of the one-dimensional pairs, 
\begin{equation}
\lambda_{ij} = {\lambda^{1D}_i}{\lambda^{1D}_j}  \quad E_{ij} = b_i^{1D} \otimes b_j^{1D} \qquad i,j \in \mathbb{N}
\end{equation}
where $\otimes$ denotes the tensor product. 
In this example, $d=4$ corresponds to the tensor-product indices $(i,j) \in [2] \times [2]$. 
When $d=20$, we consider the first 20 terms of the total-order polynomial indices $\{\{0, 0\}, \{10, 01\}, \{20, 11, 02\}, · · · \}$
To compute the elastic modulus for different resolutions, we evaluate the analytical basis functions in \eqref{eigenpair} at different resolutions.

\section{Low-fidelity models of Section \ref{ssec:rom-gp}}\label{app:num-setup-2}

Models $X^{(i)}$ for $i \in [6]$ in Section \ref{ssec:rom-gp} are formed as GP predictors. We construct a GP emulator for $C$ as a function of the parameters $\bm p$, and use the GP mean as a low-fidelity emulator.
  Given $\mathcal{D}^{\circ} = \{(\bm p^{\circ}_i, \bm Y(p^{\circ}_i))\}_{i=1}^{n_T}$ training samples with $\bm P^{\circ} =\{\bm p^{\circ}_i\}_{i=1}^{n_T}$ the sampling nodes and $\bm Y^{\circ} =\{\bm Y(p^\circ_i)\}_{i=1}^{n_T}$ the observational data, let $\bm K_{GP}$ denote a kernel matrix with entries $[\bm K_{GP}]_{ij} = k(\bm p_i, \bm p_j, \bm \theta)$ for a given kernel function $k(\bm p, \bm p'): \mathbb{R}^d \times \mathbb{R}^d \rightarrow \mathbb{R}$ is the kernel function and $\bm \theta$ is a hyperparameter that tunes the kernel. We adopt a standard GP training procedure, which determines $\bm \theta$ by maximizing a log-likelihood objective function,
\begin{equation}\label{log-lke}
\begin{array}{l l}
    \mathcal{L}(\bm \theta) & = \log p(\bm Y| \bm P) = \log \mathcal{N}(\bm Y|\bm 0, \bm K_{GP}(\bm P, \bm \theta) + \lambda\bm I) \\
    \\
    & =\displaystyle \frac{1}{2} \bm Y^{T} (\bm K_{GP} + \lambda\bm I)^{-1} \bm Y   + \frac{1}{2} \log| \bm K_{GP} + \lambda\bm I | + \frac{n}{2} \log(2\pi) \\
    \end{array}
\end{equation}
where $\bm I$ is the identity matrix and $\lambda \geq 0$ is a constant. The parameter $\lambda$ is introduced to model the effect of noise in the data, but in this example we do not consider any noise and therefore we set $\lambda=0$. We choose an exponential kernel function,
\begin{equation}
k(\bm p, \bm p') = \exp (-\|\bm p- \bm p' \|_2/\theta)
\end{equation}
with a scalar hyperparameter $\theta$. Once the optimal hyperparameter $\theta^{\ast}$ is obtained from optimization of the log likelihood \eqref{log-lke}, we compute the approximate GP sample on the test node $\bm p^{\ast}$ via
\begin{equation}\label{GP_regr}
  X(p^\ast) = \bar{\bm Y}^{\circ} + \bm K_{i\circ}^T \bm K_{GP}^{-1}(\bm Y^{\circ}-\bar{\bm Y}^{\circ}),
\end{equation}
where $\bm K_{i\circ} = k(\bm p^{\ast}, \bm P^{\circ}) $ is a vector obtained by evaluating the kernel function with the training samples $\bm P^{\circ}$ and the particular test sample $\bm p^{\ast}$, and $\bar{\bm Y}^{\circ}$ is the sample mean of the training data.

    Our training data are generated via compliance samples from the high-fidelity model. The fidelity of each GP emulator in this case are determined by the number of training samples $n_T$ as well as the optimal/nonoptimal choice of hyperparameters. We generate three fidelities by choosing $n_T=10,~100,~1000$ training samples and compute the optimal hyperparameter $\theta$ via the minimization of \eqref{log-lke}. The approximated models from the optimal hyperparameter calculation are indexed by $X^{(1)},~X^{(3)},~X^{(5)}$ for $n_T = 10, 100, 1000$, respectively. We then generate three more models with the same number of training samples as before, but with a nonoptimal hyperparameter as $\theta^{\ast} \gets 0.1 \theta^{\ast}$. We index these nonoptimal models as $X^{(2)},~X^{(4)},~X^{(6)}$. The cost of all 6 of these models is given by the cost of training, optimization, and evaluation of \eqref{GP_regr} averaged over $2 \times 10^4$ different values of $\bm p^\ast$. 

Models $X^{(i)}$ for $i = 7, \ldots, 12$ are defined via projection-based model reduction using proper orthogonal decomposition (POD). 
Let the matrix $$\bm U^H = [\bm u^H(\bm p^{(1)}) | \bm u^H(\bm p^{(2)}) | \ldots | \bm u^H(\bm p^{(n)}) ] \in \mathbb{R}^{N_h \times N_s}$$ be comprised of high-fidelity nodal displacement vectors on parametric samples $\bm P = \{\bm p^{(i)} \}_{i=1}^{N_s}$. In this notation, and the current example, $N_h$ is the number of high-fidelity finite element degrees of freedom and $N_s$ is the number of parametric samples. 
We form emulators by projecting $\bm u^H$ onto $k$ basis vectors collected as columns vectors into a matrix $\bm V_k$,
\begin{align}
  \bm u^H(\bm p^{(i)}) &\approx \bm V_k \bm w(\bm p^{(i)}), & \bm V_k \in \R^{N_h \times k}.
\end{align}
As in typical POD approaches, we choose $\bm V_k$ as the dominant $k$ left-singular vector of $\bm U^H$.
The POD coefficients $\bm w$ are computed as a Galerkin projection of the original high-fidelity finite element system, 
\begin{equation}
\begin{array}{l l l}
(\bm V_k^T \bm K \bm V_k) \bm w &=& \bm V^T_k \bm f.
\end{array}
\end{equation}
Using the notation $\bm K_k \equiv \bm V_k^T \bm K \bm V_k$, we compute an approximate compliance via $\C = \bm U^T \bm K \bm U \simeq \bm w^T \bm K_k \bm w$.

We generate six more models, $X^{(7)},\ldots,~X^{(12)}$, using the procedure above by making five choices for the reduced dimension $k$: $k = \{1,~2,~3,~4,~5,~10\}$.
We ignore the cost of generating the matrix $\bm U^H$ and take as cost only the computation time for solving the linear system  $\bm K_k \bm w = \bm F_k$ as well as the approximate compliance calculation $\C \simeq \bm w^T \bm K_k \bm w$ averaged $2 \times 10^4$ realizations of $\bm p$ as in the GP case.

\section{A larger scale problem}\label{aG}
Under the same setup as the numerical experiments in Section \ref{ppde},
we now apply Algorithm \ref{alg:aETC_mul} to a larger scale problem where the high-fidelity output is high-dimensional. 
Fix the budget as $B = 2\times 10^5$, and let 
\begin{align*}
&f(Y) =  \bar{\bm u}^{(0)}\in\R_+^{2601}&Q = I_{2601}.
\end{align*}
In this case, the number of affordable samples by the (high-fidelity) MC estimator is $\lfloor B/4096\rfloor = 48$. 
We compare AETC and MC algorithms for approximating $\E[f(Y)]$. 
In our experiment, the number of exploration rounds $m$ chosen by the AETC algorithm in the case of the square domain and the L-shape domain is $26$ and $12$, respectively.
The respective exploitation models are $f(Y)\sim X^{(4)}+X^{(5)}+X^{(6)}+\text{intercept}$ and $f(Y)\sim X^{(3)}+X^{(4)}+X^{(5)}+X^{(6)}+\text{intercept}$, and the affordable exploitation samples for the chosen model are $2762$ and $1581$, respectively. 
$\frac{\tr(Q\Gamma_SQ^T)}{\tr(Q\beta_S\beta_S^TQ)}$ in both cases are of order $10^{-6}$, which satisfies the assumption \eqref{last}. 
The condition number of the $(1,1)$ minor of $\Sigma_S$ are $712.03$ and $6486.57$ for the square and the L-shape domain, respectively. 
Although these relatively large quantities appear in the worst-case estimate \eqref{0b}, we observe that the performance of the algorithm in practice is more accurate than these estimates suggest.

We investigate exploitation error conditioned on the exploration above: We apply the trained model (the model selected during exploration) for exploitation $500$ times, and for each estimate we compute the total error (computed in relation to a ground truth MC run over $50000$ independent high-fidelity samples, which is visualized in Figure \ref{fig:24}), i.e., the squared $\ell_2$ norm of the difference between $\E[f(Y)]$ and its estimate.  
We then find the $0.05$-$0.5$-$0.95$ quantiles of this scalar total error, and plot the spatial pointwise error at the realization identified by the total error quantiles.
The results are compared to the budget-$B$ MC estimator in Figure \ref{fig:2}. 
Within the uncertainty, the AETC algorithm enjoys a superior performance overall, on both geometries.

\begin{figure}[htbp]
\begin{center}
  \includegraphics[width=0.45\textwidth]{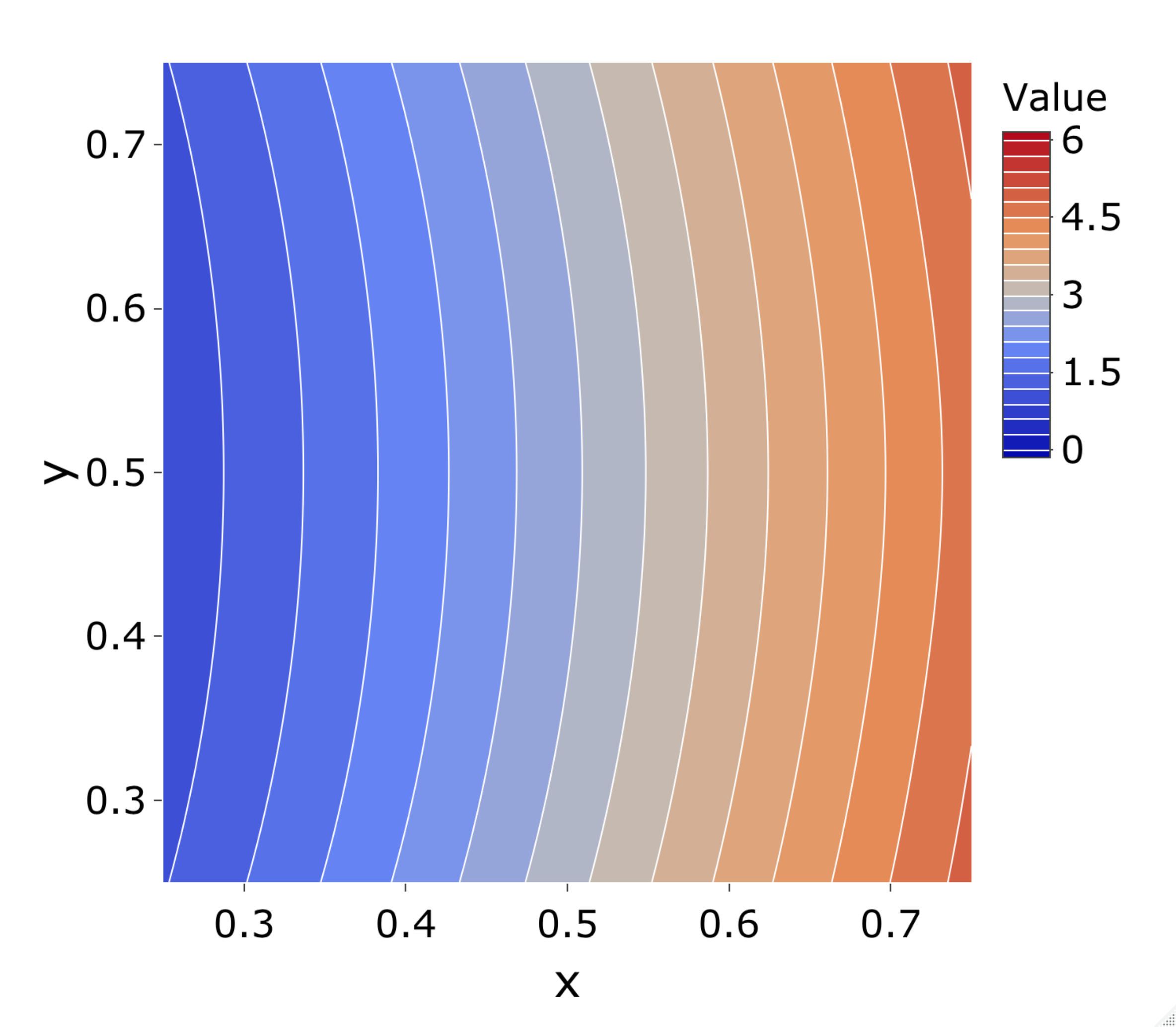}\hspace{1cm}
  \includegraphics[width=0.45\textwidth]{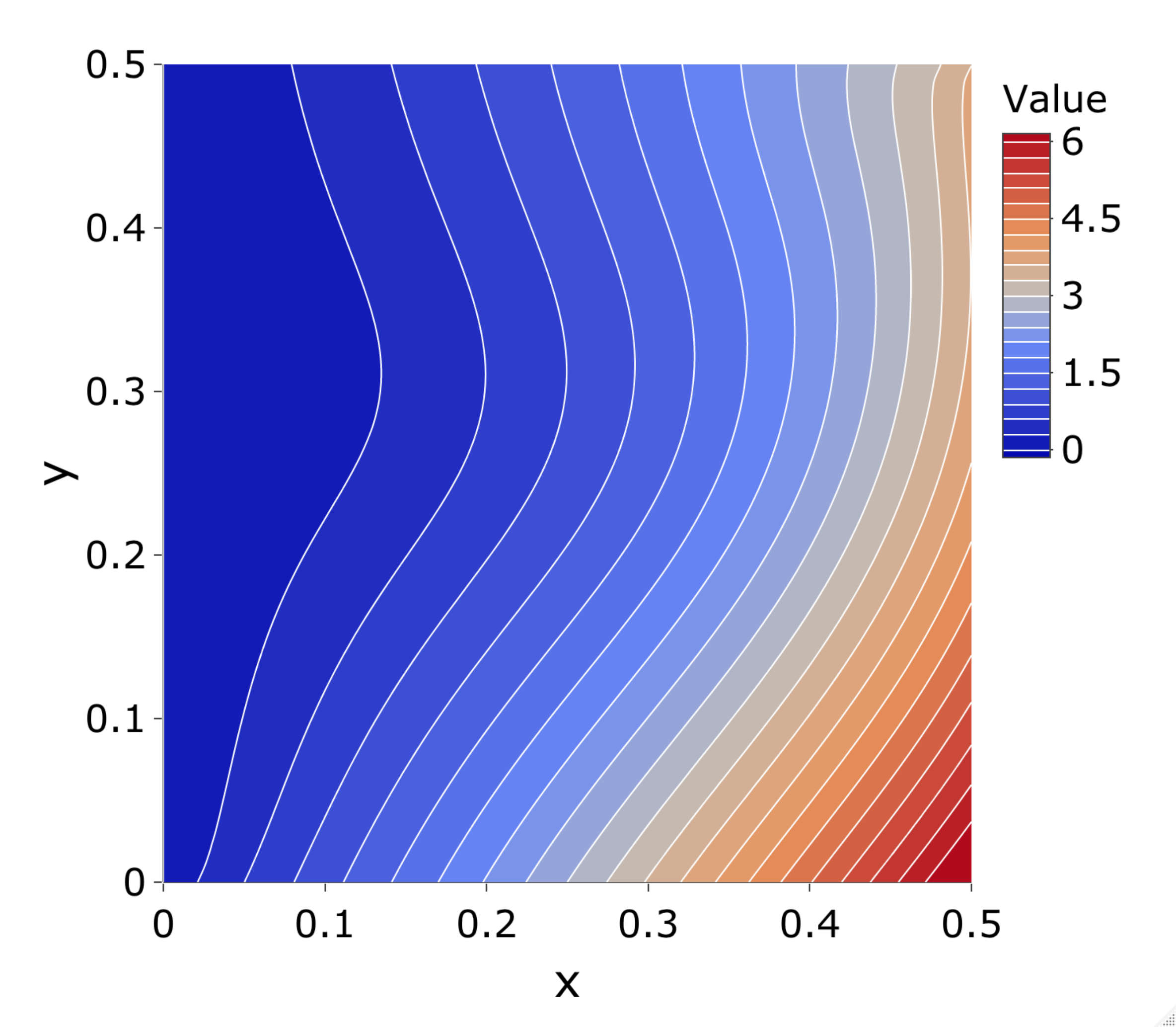}
  \caption{\small Visualization of the ground truth $\E[f(Y)]$ computed by the MC over $50000$ independent samples in the case of the 
square domain (\textbf{Left}) and the L-shape domain (\textbf{Right}).}
  \label{fig:24}
  \end{center}
\end{figure}

\begin{figure}[htbp]
\begin{center}
  \includegraphics[width=0.32\textwidth]{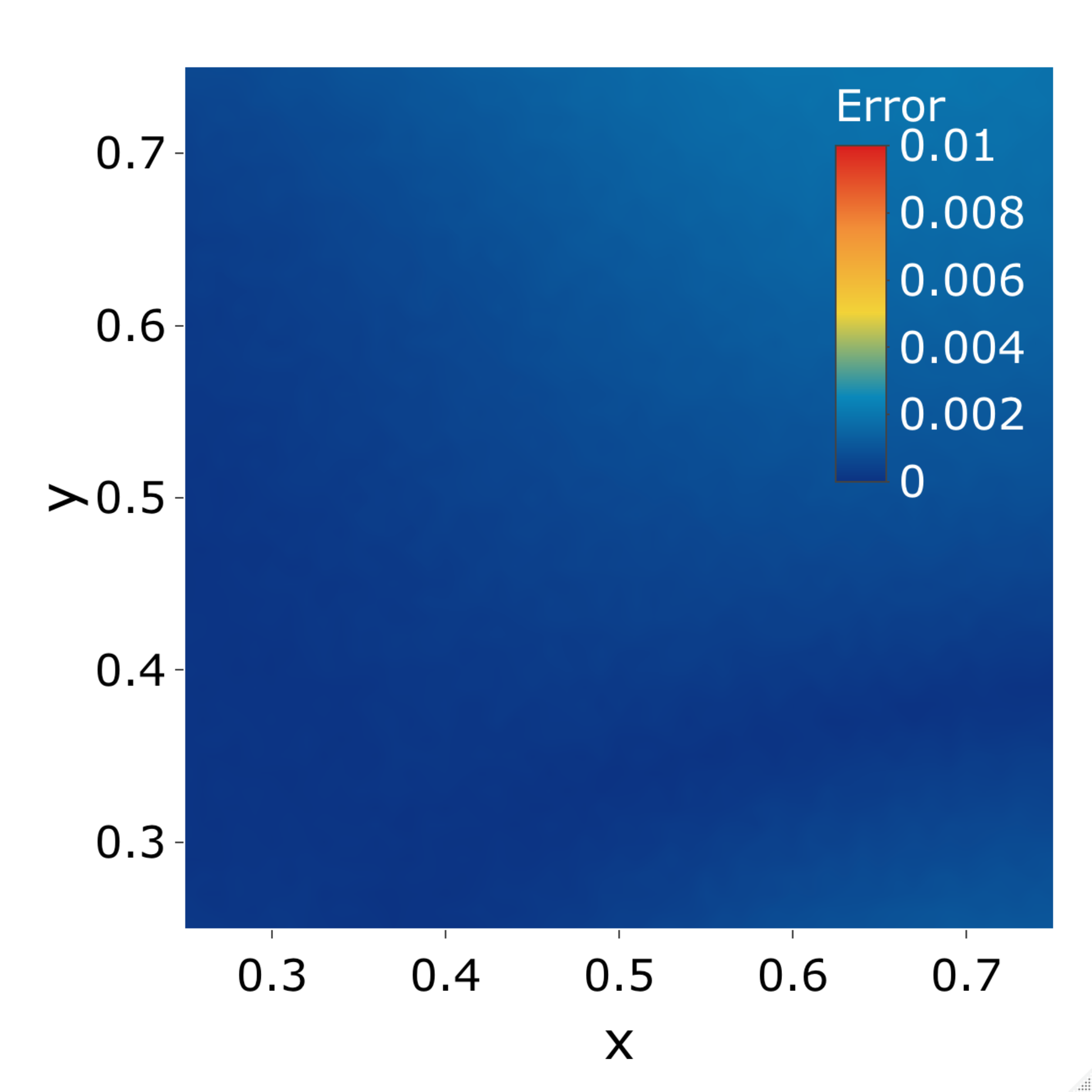}
  \includegraphics[width=0.32\textwidth]{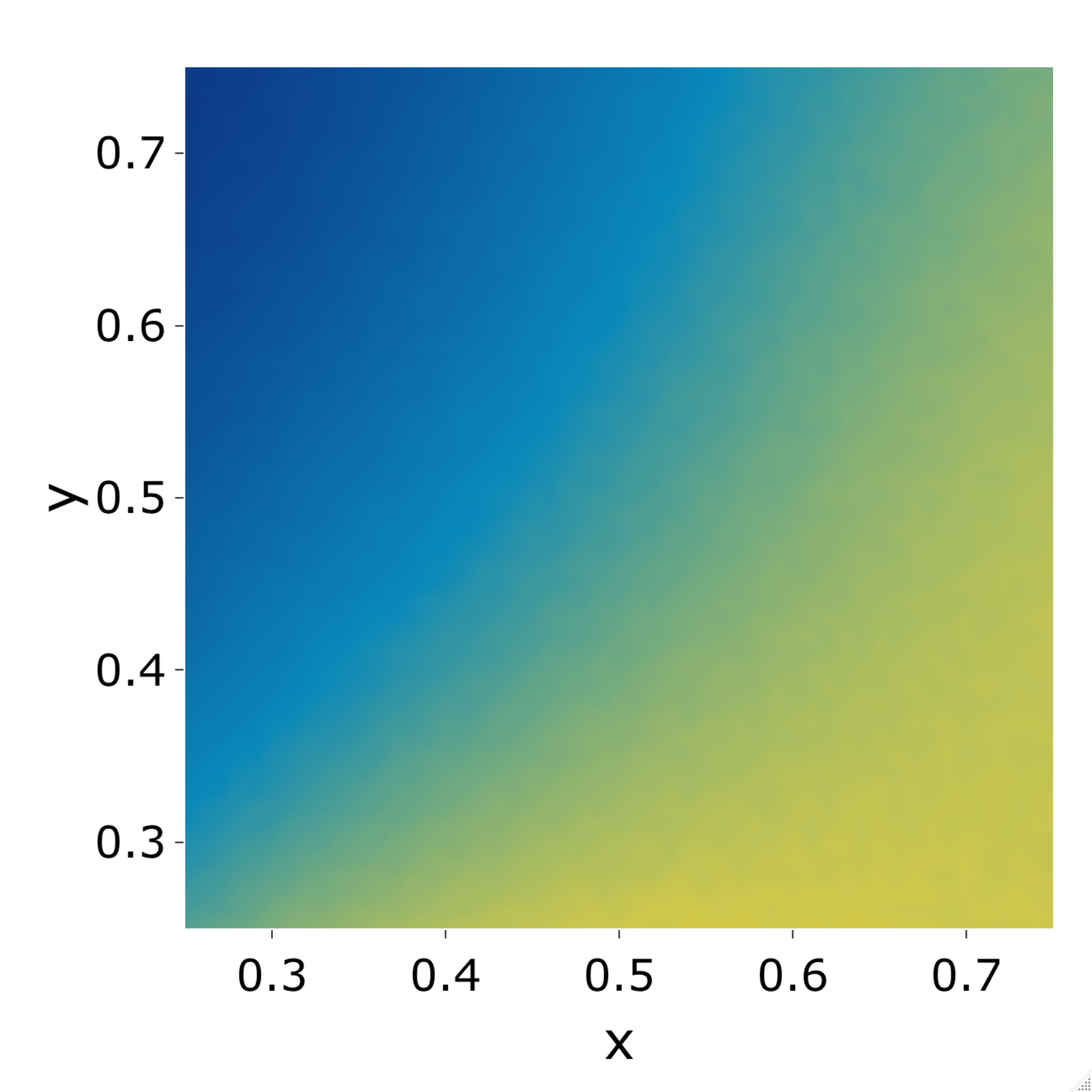}
  \includegraphics[width=0.32\textwidth]{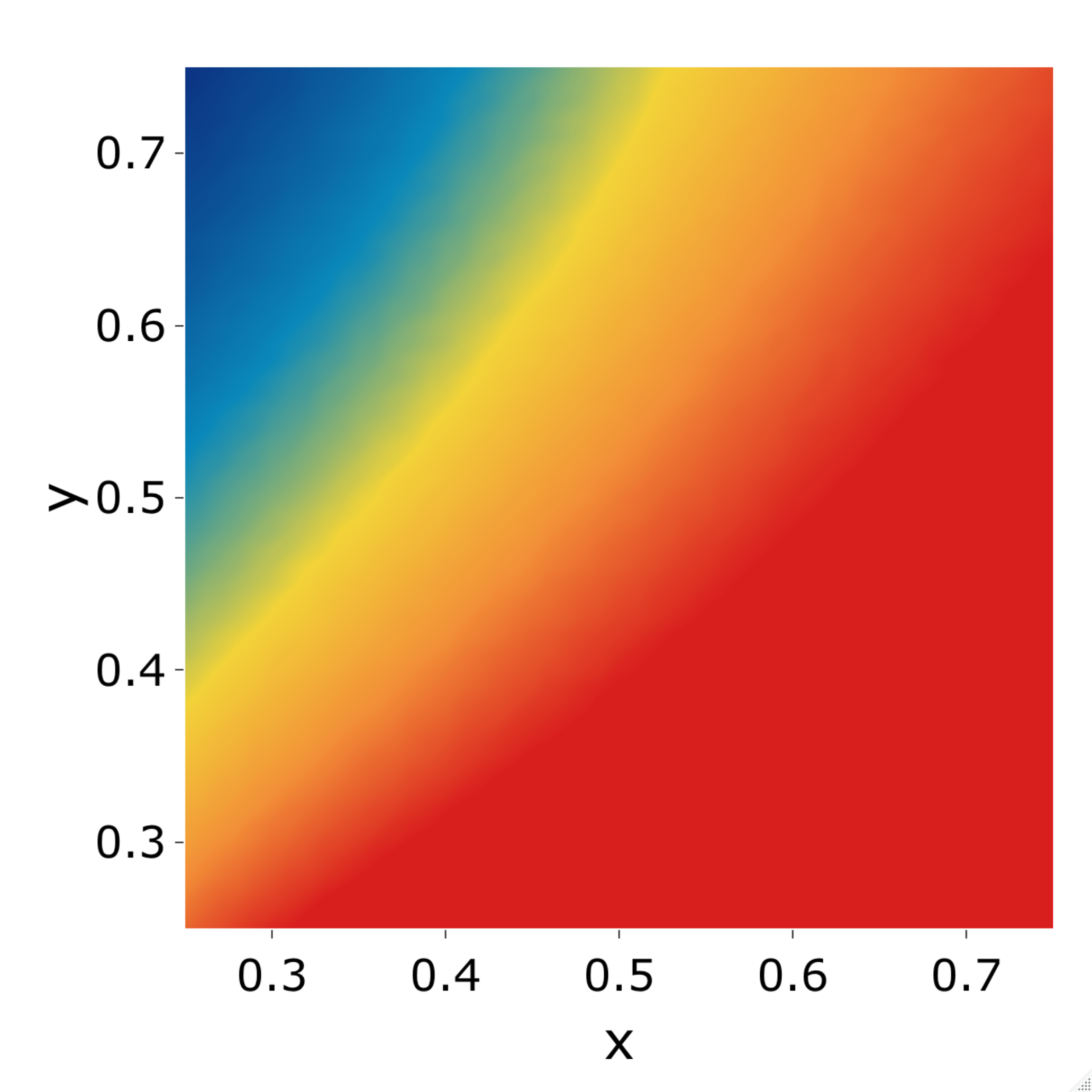}
  \includegraphics[width=0.32\textwidth]{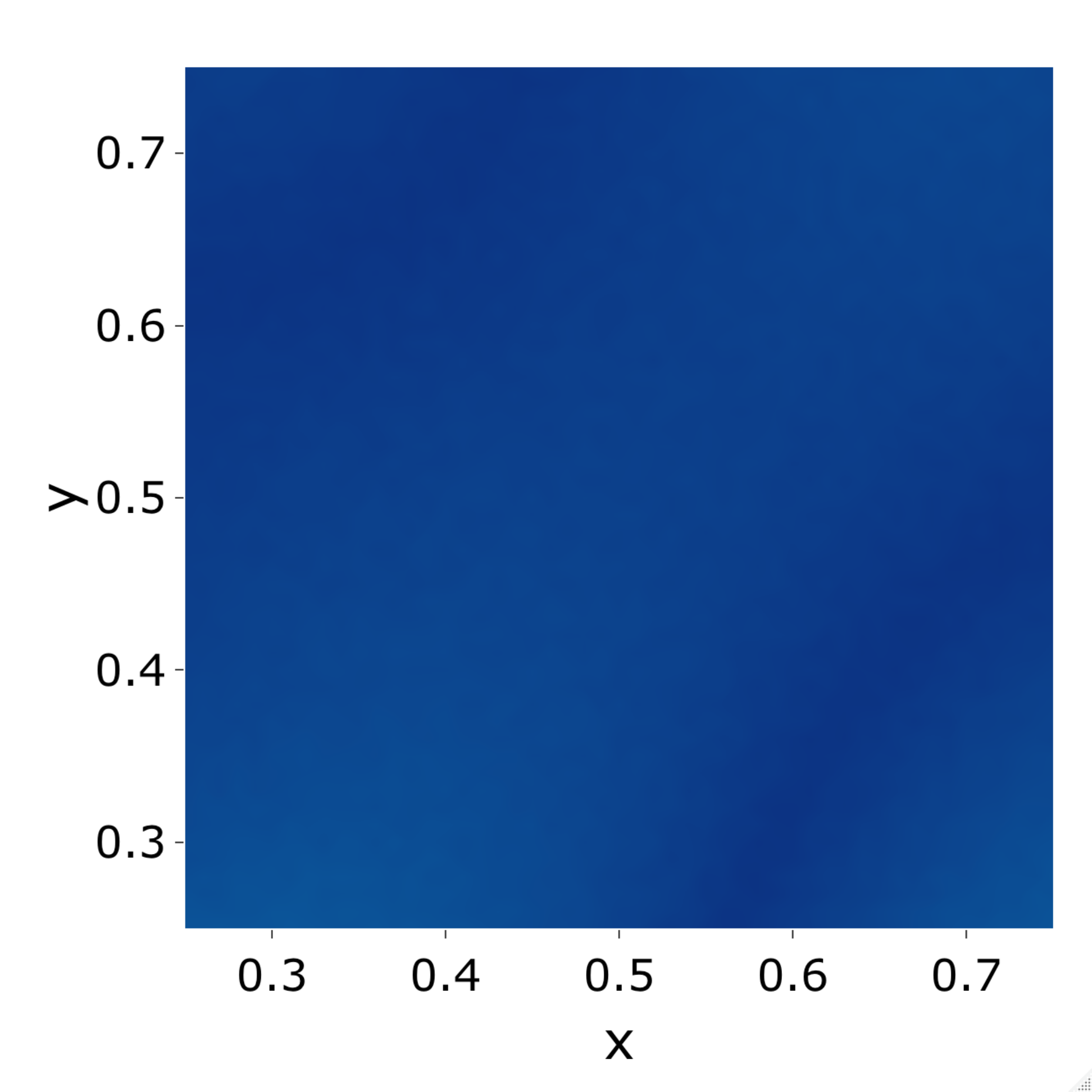}
  \includegraphics[width=0.32\textwidth]{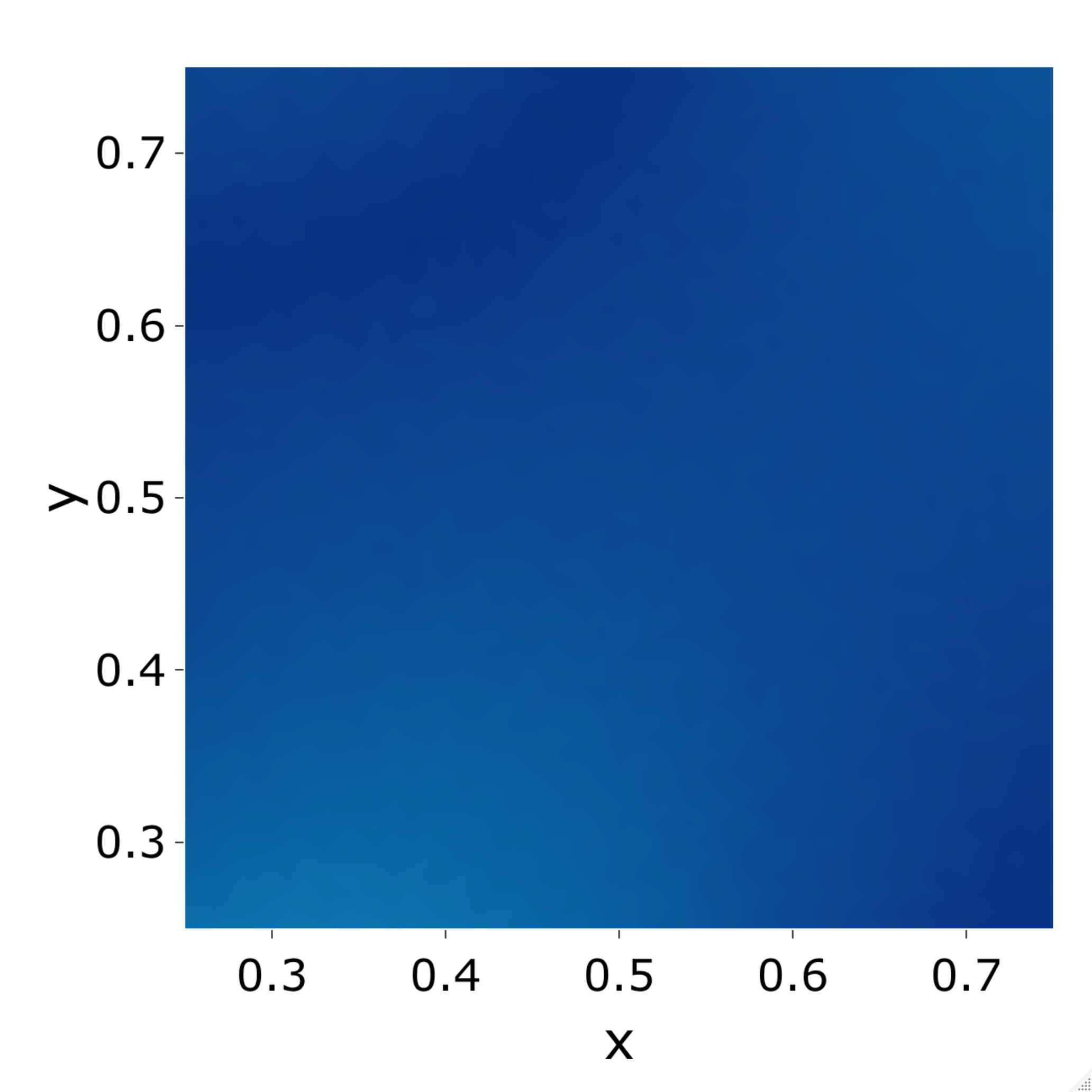}
  \includegraphics[width=0.32\textwidth]{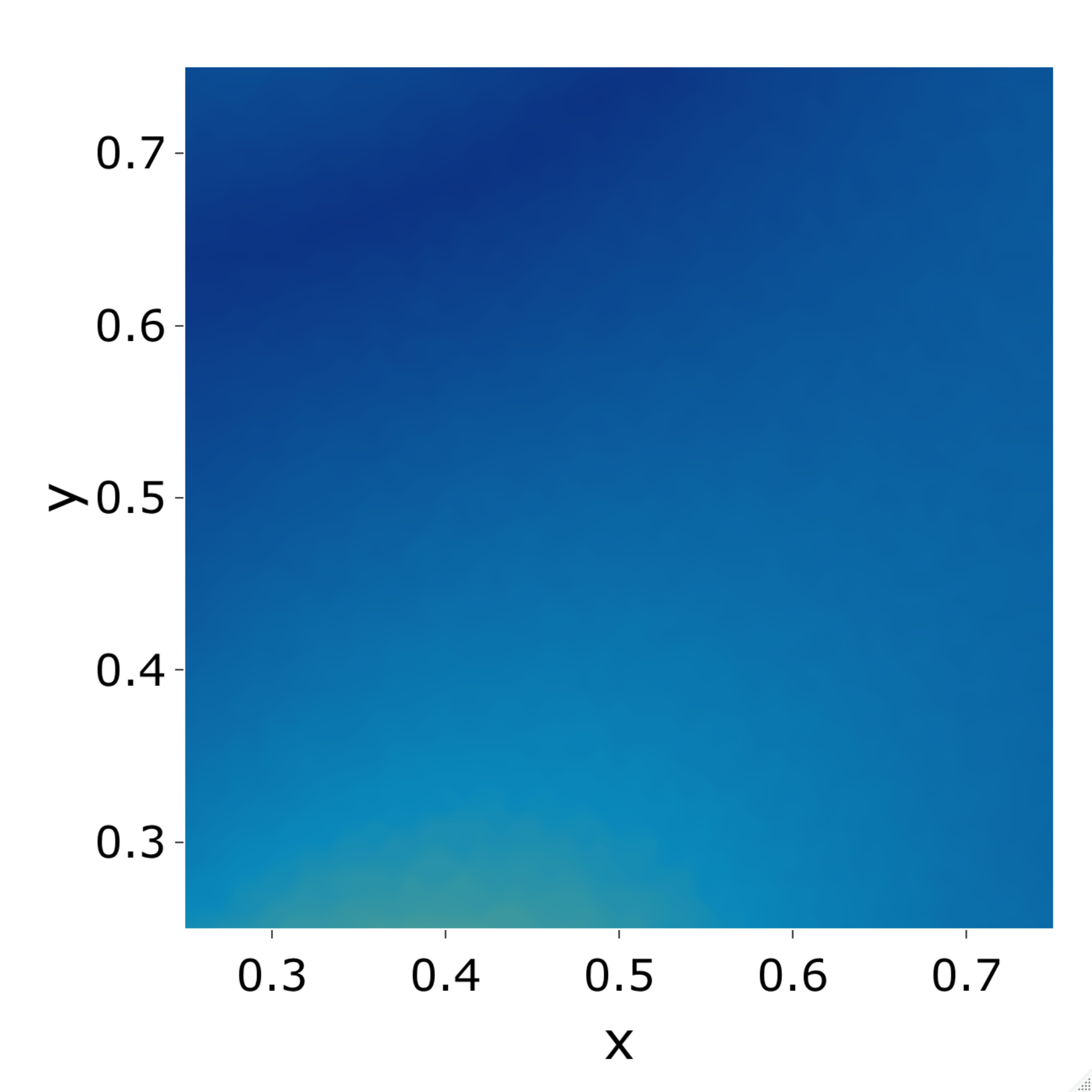}
  \noindent\makebox[\linewidth]{\rule{\textwidth}{0.6 pt}}
  \includegraphics[width=0.32\textwidth]{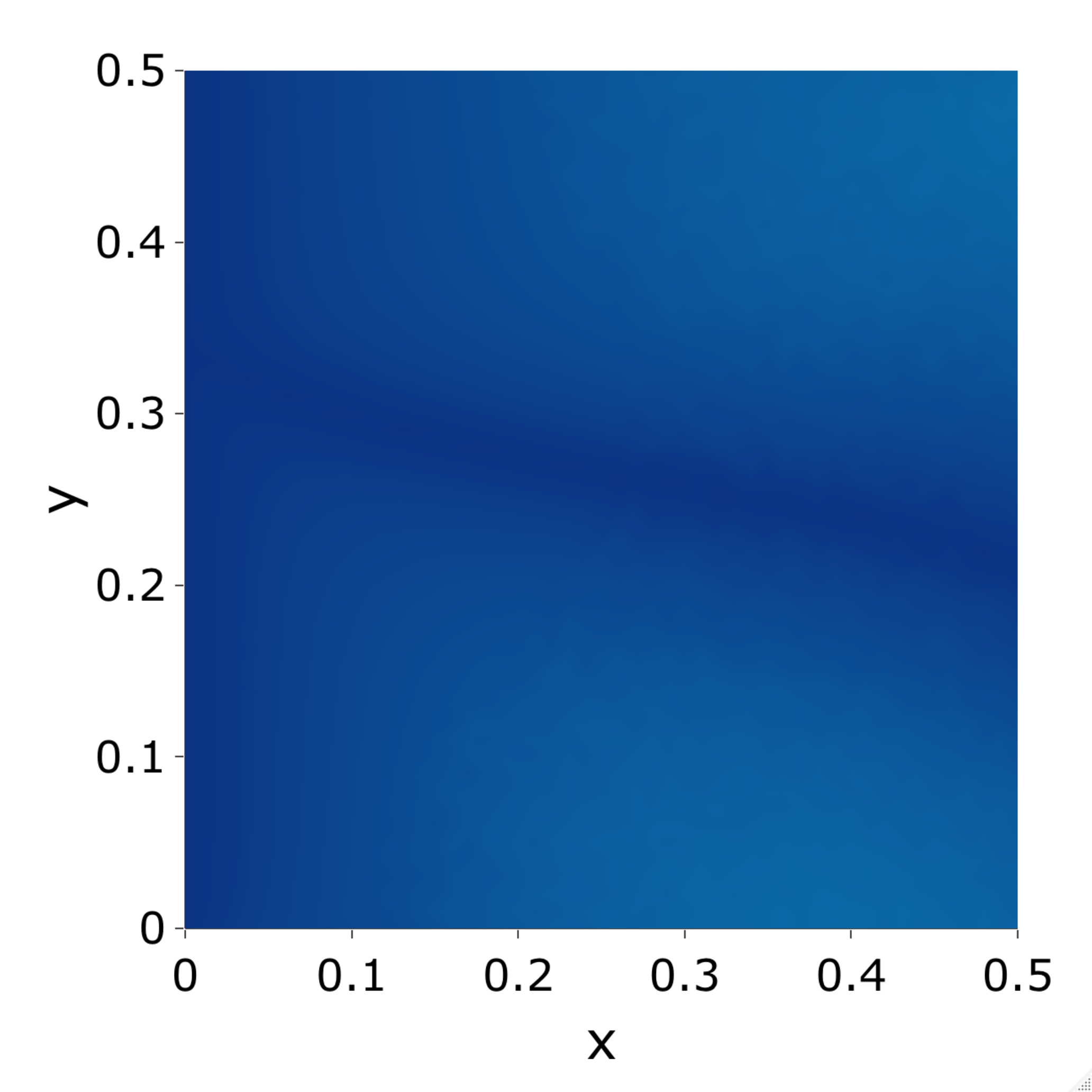}
  \includegraphics[width=0.32\textwidth]{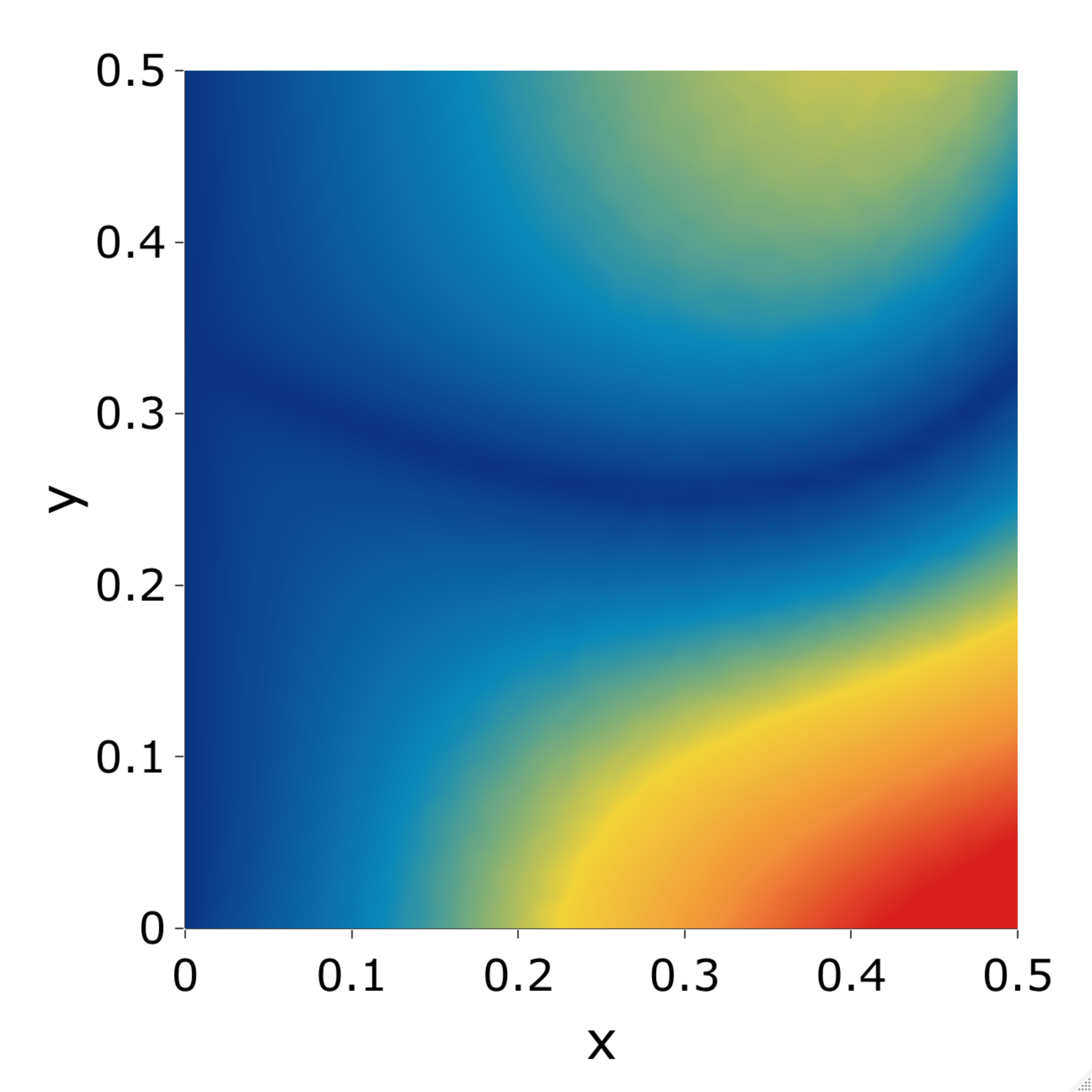}
  \includegraphics[width=0.32\textwidth]{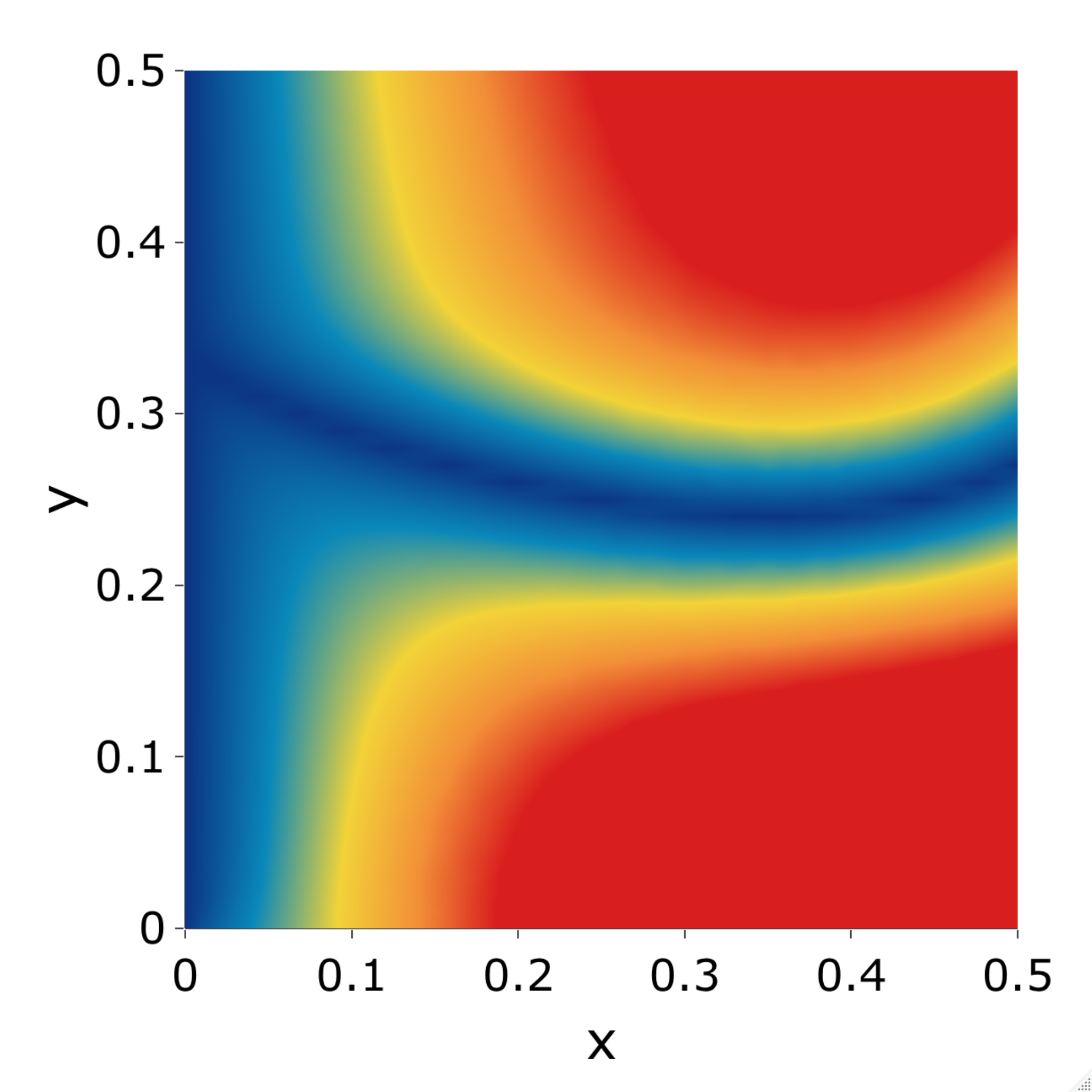}
  \includegraphics[width=0.32\textwidth]{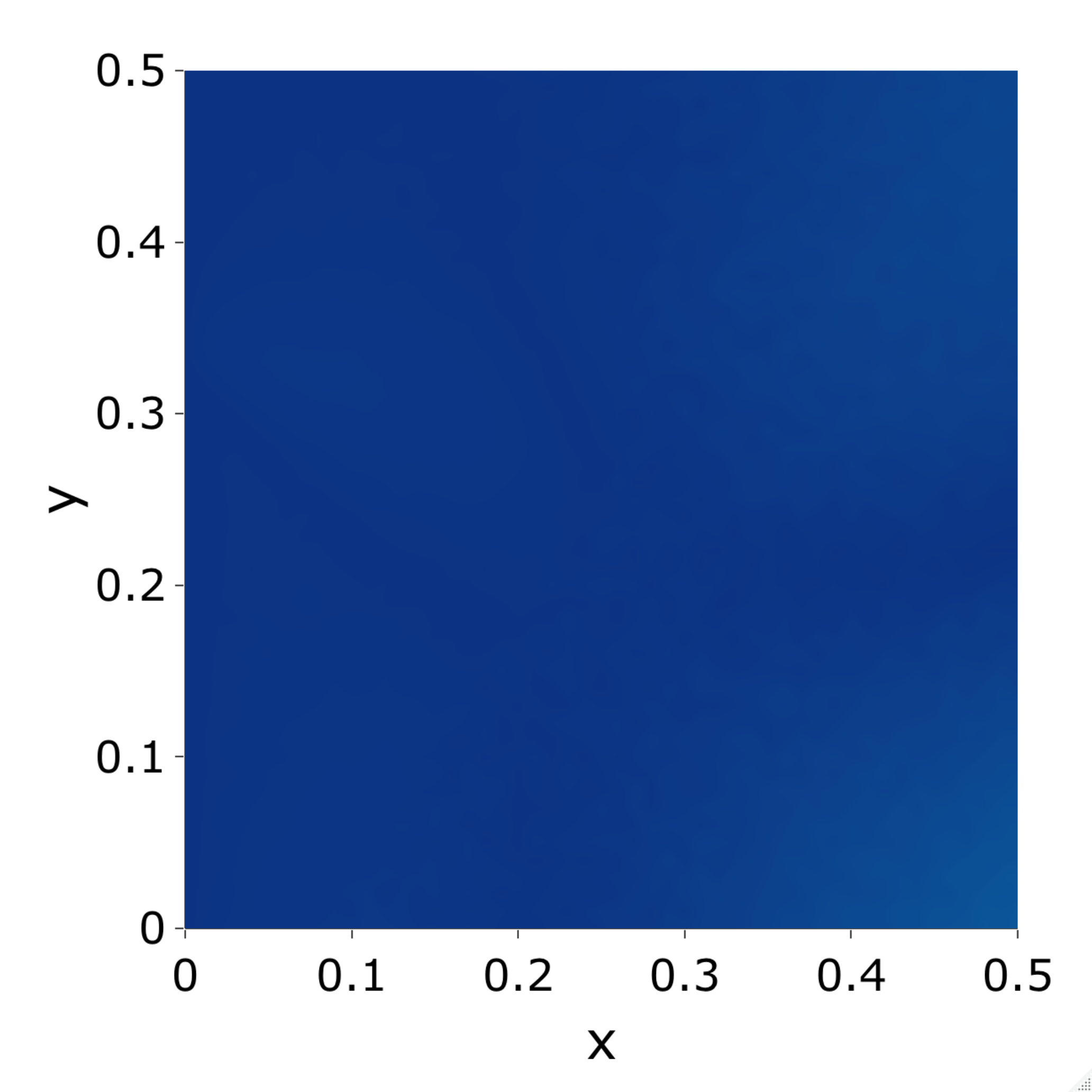}
  \includegraphics[width=0.32\textwidth]{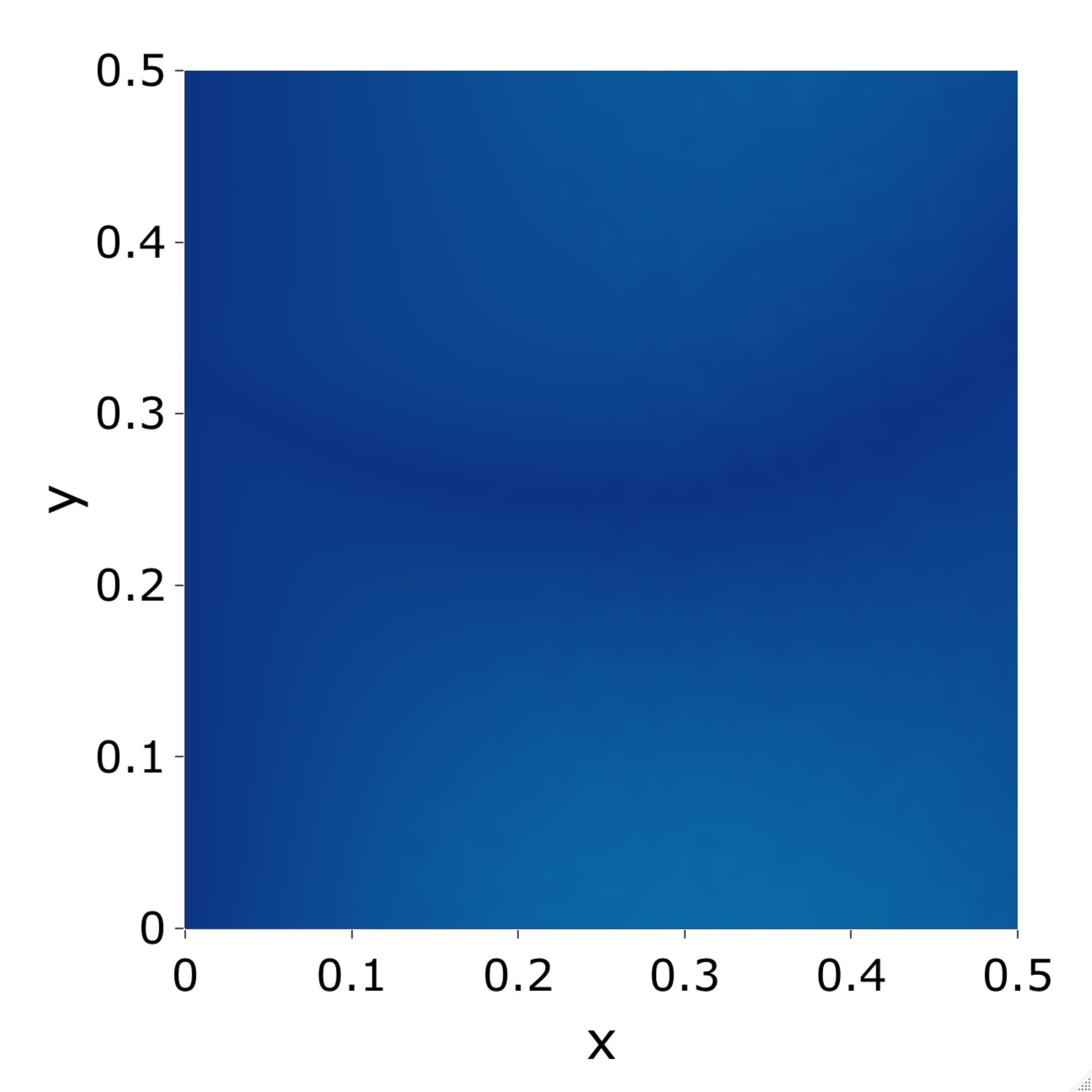}
  \includegraphics[width=0.32\textwidth]{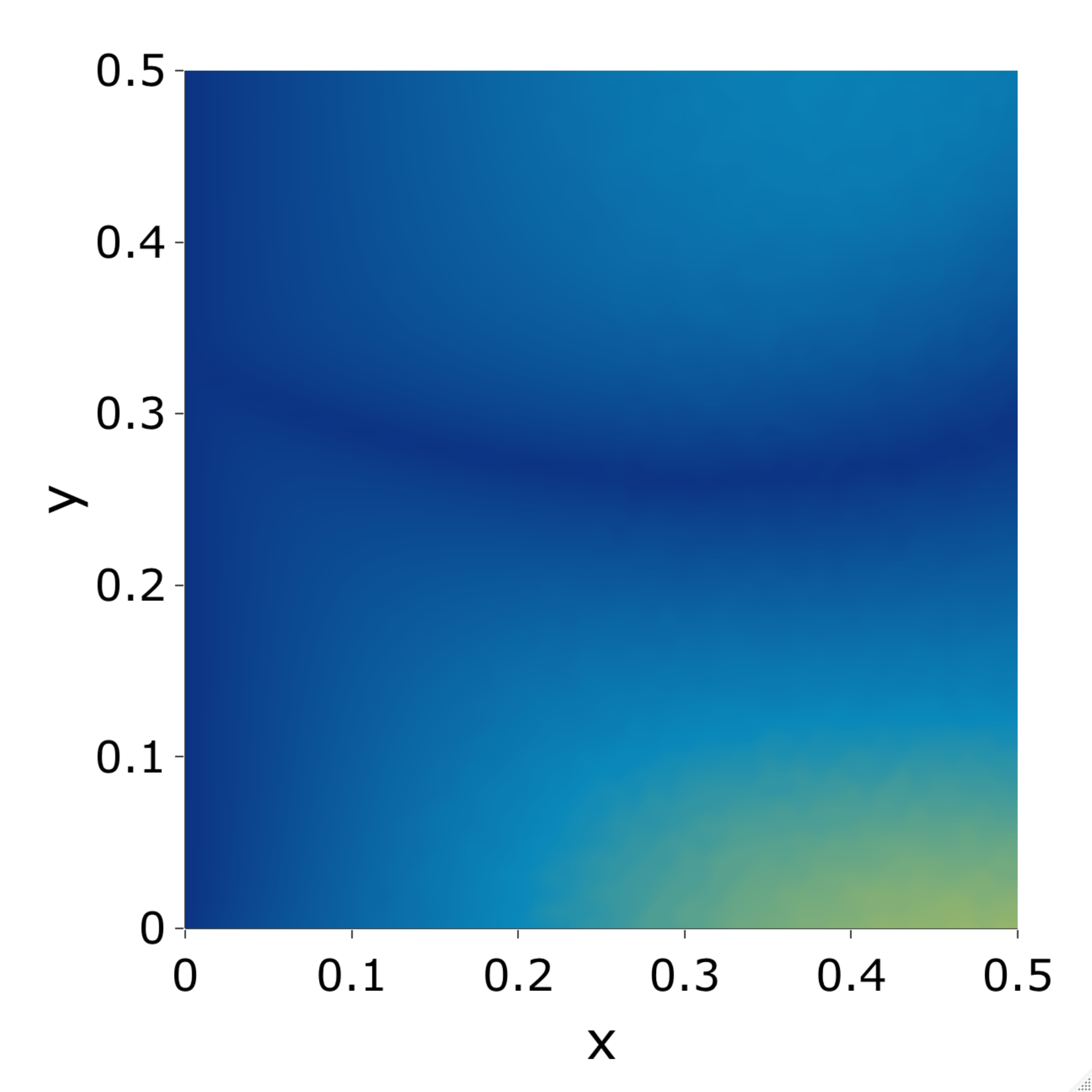}
   \caption{\small 
  Error comparison of the estimated $\E[f(Y)]$ given by the MC and the AETC algorithms. Top two rows: square domain. Bottom two rows: L-shape domain. Within each two-row block, the top row corresponds to budget-$B$ MC, and the bottom row to budget-$B$ AETC. (Left, middle, right) columns: pointwise spatial errors corresponding to $(0.05, 0.5, 0.95)$ quantiles of the total scalar $\ell_2$ error. The color limits are uniform for every plot, and are quantified in the top-left plot.
  }
  \label{fig:2}
  \end{center}
\end{figure}

\end{appendices}

\bibliographystyle{siamplain}
\bibliography{ref}

\begin{thebibliography}{10}

\bibitem{Alexandrov98}
{\sc N.~M. Alexandrov, J.~J.~E.~Dennis, R.~M. Lewis, and V.~Torczon}, {\em A
  trust-region framework for managing the use of approximation models in
  optimization}, Structural Optimization, 15 (1998), pp.~16--23.

\bibitem{Alexandrov01}
{\sc N.~M. Alexandrov, R.~M. Lewis, C.~R. Gumbert, L.~L. Green, and P.~A.
  Newman}, {\em Approximation and model management in aerodynamic optimization
  with variable-fidelity models}, Journal of Aircraft, 38 (2001),
  pp.~1093--1101.

\bibitem{anantharam1987asymptotically}
{\sc V.~Anantharam, P.~Varaiya, and J.~Walrand}, {\em Asymptotically efficient
  allocation rules for the multiarmed bandit problem with multiple plays-part
  ii: Markovian rewards}, IEEE Transactions on Automatic Control, 32 (1987),
  pp.~977--982.

\bibitem{Andreassen2011}
{\sc E.~Andreassen, A.~Clausen, M.~Schevenels, B.~S. Lazarov, and O.~Sigmund},
  {\em Efficient topology optimization in matlab using 88 lines of code},
  Structural and Multidisciplinary Optimization, 43 (2011), pp.~1--16.

\bibitem{audibert2010best}
{\sc J.-Y. Audibert and S.~Bubeck}, {\em Best arm identification in multi-armed
  bandits}, in COLT-23th Conference on Learning Theory-2010, 2010, pp.~13--p.

\bibitem{auer2002finite}
{\sc P.~Auer, N.~Cesa-Bianchi, and P.~Fischer}, {\em Finite-time analysis of
  the multiarmed bandit problem}, Machine learning, 47 (2002), pp.~235--256.

\bibitem{auer1995gambling}
{\sc P.~Auer, N.~Cesa-Bianchi, Y.~Freund, and R.~E. Schapire}, {\em Gambling in
  a rigged casino: The adversarial multi-armed bandit problem}, in Proceedings
  of IEEE 36th Annual Foundations of Computer Science, IEEE, 1995,
  pp.~322--331.

\bibitem{Auer_2002}
{\sc P.~Auer, N.~Cesa-Bianchi, Y.~Freund, and R.~E. Schapire}, {\em The
  nonstochastic multiarmed bandit problem}, {SIAM} Journal on Computing, 32
  (2002), pp.~48--77, \url{https://doi.org/10.1137/s0097539701398375},
  \url{https://doi.org/10.1137%2Fs0097539701398375}.

\bibitem{Auer_2010}
{\sc P.~Auer and R.~Ortner}, {\em {UCB} revisited: Improved regret bounds for
  the stochastic multi-armed bandit problem}, Periodica Mathematica Hungarica,
  61 (2010), pp.~55--65, \url{https://doi.org/10.1007/s10998-010-3055-6},
  \url{https://doi.org/10.1007%2Fs10998-010-3055-6}.

\bibitem{Benner15}
{\sc P.~Benner, S.~Gugercin, and K.~Willcox}, {\em A survey of projection-based
  model reduction methods for parametric dynamical systems}, {SIAM} Review, 57
  (2015), pp.~483--531.

\bibitem{bouneffouf2019survey}
{\sc D.~Bouneffouf and I.~Rish}, {\em A survey on practical applications of
  multi-armed and contextual bandits}, arXiv preprint arXiv:1904.10040,
  (2019).

\bibitem{Bratley87}
{\sc P.~Bratley, B.~L. Fox, and L.~E. Schrage}, {\em A Guide to Simulation},
  Springer, 1987.

\bibitem{bubeck2012regret}
{\sc S.~Bubeck, N.~Cesa-Bianchi, et~al.}, {\em Regret analysis of stochastic
  and nonstochastic multi-armed bandit problems}, Foundations and
  Trends{\textregistered} in Machine Learning, 5 (2012), pp.~1--122.

\bibitem{bubeck2012towards}
{\sc S.~Bubeck, N.~Cesa-Bianchi, and S.~M. Kakade}, {\em Towards minimax
  policies for online linear optimization with bandit feedback}, in Conference
  on Learning Theory, JMLR Workshop and Conference Proceedings, 2012,
  pp.~41--1.

\bibitem{bubeck2009pure}
{\sc S.~Bubeck, R.~Munos, and G.~Stoltz}, {\em Pure exploration in multi-armed
  bandits problems}, in International conference on Algorithmic learning
  theory, Springer, 2009, pp.~23--37.

\bibitem{Chang11}
{\sc C.-C. Chang and C.-J. Lin}, {\em Libsvm: A library for support vector
  machines}, ACM Trans. Intell. Syst. Technol., 2 (2011), pp.~1--27.

\bibitem{Christen05}
{\sc J.~A. Christen and C.~Fox}, {\em Markov chain monte carlo using an
  approximation}, J. Comput. Graph. Statist., 14 (2005), pp.~795--810.

\bibitem{Cortes95}
{\sc C.~Cortes and V.~Vapnik}, {\em Support-vector networks}, Machine Learning,
  20 (1995), pp.~273--297.

\bibitem{dani2008stochastic}
{\sc V.~Dani, T.~P. Hayes, and S.~M. Kakade}, {\em Stochastic linear
  optimization under bandit feedback},  (2008).

\bibitem{Eldred2004}
{\sc M.~Eldred, A.~Giunta, and S.~Collis}, {\em Second-order corrections for
  surrogate-based optimization with model hierarchies}, 10th AIAA/ISSMO
  Multidisciplinary Analysis and Optimization Conference, Multidisciplinary
  Analysis Optimization Conferences,  (2004), pp.~A550--A591.

\bibitem{Forrester09}
{\sc A.~I.~J. Forrester and A.~J. Keane}, {\em Recent advances in
  surrogate-based optimization}, Progr. Aerospace Sci., 36 (2009), pp.~50--79.

\bibitem{Forrester08}
{\sc A.~I.~J. Forrester, A.~Sobester, and A.~J. Keane}, {\em Engineering Design
  via Surrogate Modelling: A Practical Guide}, Wiley, 2008.

\bibitem{Fox97}
{\sc C.~Fox and G.~Nicholls}, {\em Sampling conductivity images via mcmc}, The
  Art and Science of Bayesian Image Analysis, University of Leeds, 14 (1997),
  pp.~91--100.

\bibitem{fuller2011sampling}
{\sc W.~A. Fuller}, {\em Sampling statistics}, vol.~560, John Wiley \& Sons,
  2011.

\bibitem{garivier2016optimal}
{\sc A.~Garivier and E.~Kaufmann}, {\em Optimal best arm identification with
  fixed confidence}, in Conference on Learning Theory, PMLR, 2016,
  pp.~998--1027.

\bibitem{giles2008multilevel}
{\sc M.~B. Giles}, {\em Multilevel monte carlo path simulation}, Operations
  research, 56 (2008), pp.~607--617.

\bibitem{giles2015multilevel}
{\sc M.~B. Giles}, {\em Multilevel monte carlo methods.}, Acta Numer., 24
  (2015), pp.~259--328.

\bibitem{gittins1979bandit}
{\sc J.~C. Gittins}, {\em Bandit processes and dynamic allocation indices},
  Journal of the Royal Statistical Society: Series B (Methodological), 41
  (1979), pp.~148--164.

\bibitem{Gorodetsky_2020}
{\sc A.~A. Gorodetsky, G.~Geraci, M.~S. Eldred, and J.~D. Jakeman}, {\em A
  generalized approximate control variate framework for multifidelity
  uncertainty quantification}, Journal of Computational Physics, 408 (2020),
  p.~109257, \url{https://doi.org/10.1016/j.jcp.2020.109257},
  \url{https://doi.org/10.1016%2Fj.jcp.2020.109257}.

\bibitem{gorodetsky_mfnets_2020}
{\sc A.~A. Gorodetsky, J.~D. Jakeman, G.~Geraci, and M.~S. Eldred}, {\em
  {MFNets}: {MULTI}-{FIDELITY} {DATA}-{DRIVEN} {NETWORKS} {FOR} {BAYESIAN}
  {LEARNING} {AND} {PREDICTION}}, International Journal for Uncertainty
  Quantification, 10 (2020),
  \url{https://doi.org/10.1615/Int.J.UncertaintyQuantification.2020032978}.

\bibitem{Gugercin08}
{\sc S.~Gugercin, A.~C. Antoulas, and C.~Beattie}, {\em H-2 model reduction for
  large-scale linear dynamical systems}, {SIAM} J. Matrix Anal. Appl., 30
  (2008), pp.~609--638.

\bibitem{Hammersley64}
{\sc J.~M. Hammersley and D.~C. Handscomb}, {\em Monte Carlo Methods}, Methuen,
  London, 1964.

\bibitem{Hesthaven16}
{\sc J.~S. Hesthaven, G.~Rozza, and B.~Stamm}, {\em Certified Reduced Basis
  Methods for Parametrized Partial Differential Equations}, Springer, 2016.

\bibitem{jaksch2010near}
{\sc T.~Jaksch, R.~Ortner, and P.~Auer}, {\em Near-optimal regret bounds for
  reinforcement learning.}, Journal of Machine Learning Research, 11 (2010).

\bibitem{kandasamy2016multi}
{\sc K.~Kandasamy, G.~Dasarathy, J.~Schneider, and B.~Poczos}, {\em The
  multi-fidelity multi-armed bandit}, arXiv preprint arXiv:1610.09726,  (2016).

\bibitem{Koutsourelakis09}
{\sc P.-S. Koutsourelakis}, {\em Accurate uncertainty quantification using
  inaccurate computational models}, {SIAM} Journal on Scientific Computing, 31
  (2009), pp.~3274--3300.

\bibitem{Lai_1985}
{\sc T.~Lai and H.~Robbins}, {\em Asymptotically efficient adaptive allocation
  rules}, Advances in Applied Mathematics, 6 (1985), pp.~4--22,
  \url{https://doi.org/10.1016/0196-8858(85)90002-8},
  \url{https://doi.org/10.1016%2F0196-8858%2885%2990002-8}.

\bibitem{Lai_1982}
{\sc T.~L. Lai and C.~Z. Wei}, {\em Least squares estimates in stochastic
  regression models with applications to identification and control of dynamic
  systems}, The Annals of Statistics, 10 (1982), pp.~154--166,
  \url{https://doi.org/10.1214/aos/1176345697},
  \url{https://doi.org/10.1214%2Faos%2F1176345697}.

\bibitem{lattimore2020bandit}
{\sc T.~Lattimore and C.~Szepesv{\'a}ri}, {\em Bandit algorithms}, Cambridge
  University Press, 2020.

\bibitem{Majda10}
{\sc A.~J. Majda and B.~Gershgorin}, {\em Quantifying uncertainty in climate
  change science through empirical information theory}, Proc. Natl. Acad. Sci.
  USA, 107 (2010), pp.~14958--14963.

\bibitem{Mendelson_2006}
{\sc S.~Mendelson and A.~Pajor}, {\em On singular values of matrices with
  independent rows}, Bernoulli, 12 (2006), pp.~761--773,
  \url{https://doi.org/10.3150/bj/1161614945},
  \url{https://doi.org/10.3150%2Fbj%2F1161614945}.

\bibitem{Narayan14}
{\sc A.~Narayan, C.~Gittelson, and D.~Xiu}, {\em A stochastic collocation
  algorithm with multifidelity models}, {SIAM} Journal on Scientific Computing,
  36 (2014), pp.~A495--A521.

\bibitem{Nelson87}
{\sc B.~L. Nelson}, {\em On control variate estimators}, Computers \&
  Operations Research, 14 (1987), pp.~219--225.

\bibitem{Ng16}
{\sc L.~W. Ng and K.~Willcox}, {\em Monte-carlo information-reuse approach to
  aircraft conceptual design optimization under uncertainty}, J. Aircraft, 53
  (2016), pp.~427--438.

\bibitem{Peherstorfer14_SIAM_conf}
{\sc B.~Peherstorfer, D.~Pfl\"uger, and H.-J. Bungartz}, {\em Density
  estimation with adaptive sparse grids for large data sets}, Proceedings of
  the 2014 {SIAM} International Conference on Data Mining, 36 (2014),
  pp.~443--451.

\bibitem{Peherstorfer_2016}
{\sc B.~Peherstorfer, K.~Willcox, and M.~Gunzburger}, {\em Optimal model
  management for multifidelity monte carlo estimation}, {SIAM} Journal on
  Scientific Computing, 38 (2016), pp.~A3163--A3194,
  \url{https://doi.org/10.1137/15m1046472},
  \url{https://doi.org/10.1137%2F15m1046472}.

\bibitem{Peherstorfer_2018_survey}
{\sc B.~Peherstorfer, K.~Willcox, and M.~Gunzburger}, {\em Survey of
  multifidelity methods in uncertainty propagation, inference, and
  optimization}, {SIAM} Review, 60 (2018), pp.~A550--A591.

\bibitem{Rasmussen06}
{\sc C.~E. Rasmussen and C.~K.~I. Williams}, {\em Gaussian Processes for
  Machine Learning}, Massachusetts Institute of Technology, 2006.

\bibitem{Rozza08}
{\sc G.~Rozza, D.~B.~P. Huynh, and A.~T. Patera}, {\em Reduced basis
  approximation and a posteriori error estimation for affinely parametrized
  elliptic coercive partial differential equations}, Arch. Comput. Methods
  Engrg., 15 (2008), pp.~229--275.

\bibitem{Rusmevichientong_2010}
{\sc P.~Rusmevichientong and J.~N. Tsitsiklis}, {\em Linearly parameterized
  bandits}, Mathematics of Operations Research, 35 (2010), pp.~395--411,
  \url{https://doi.org/10.1287/moor.1100.0446},
  \url{https://doi.org/10.1287%2Fmoor.1100.0446}.

\bibitem{schaden2020asymptotic}
{\sc D.~Schaden and E.~Ullmann}, {\em Asymptotic analysis of multilevel best
  linear unbiased estimators}, arXiv preprint arXiv:2012.03658,  (2020).

\bibitem{Schaden_2020}
{\sc D.~Schaden and E.~Ullmann}, {\em On multilevel best linear unbiased
  estimators}, {SIAM}/{ASA} Journal on Uncertainty Quantification, 8 (2020),
  pp.~601--635, \url{https://doi.org/10.1137/19m1263534},
  \url{https://doi.org/10.1137%2F19m1263534}.

\bibitem{Sirovich87}
{\sc L.~Sirovich}, {\em Turbulence and the dynamics of coherent structures},
  Quart. Appl. Math., 45 (1987), pp.~561--571.

\bibitem{Teckentrup2015}
{\sc A.~L. Teckentrup, P.~Jantsch, C.~G. Webster, and M.~Gunzburger}, {\em A
  multilevel stochastic collocation method for partial differential equations
  with random input data}, SIAM/ASA Journal on Uncertainty Quantification, 3
  (2015), pp.~1046--1074.

\bibitem{Thompson_1933}
{\sc W.~R. Thompson}, {\em On the likelihood that one unknown probability
  exceeds another in view of the evidence of two samples}, Biometrika, 25
  (1933), p.~285, \url{https://doi.org/10.2307/2332286},
  \url{https://doi.org/10.2307%2F2332286}.

\bibitem{tran2010epsilon}
{\sc L.~Tran-Thanh, A.~Chapman, J.~E. Munoz De Cote Flores~Luna, A.~Rogers, and
  N.~R. Jennings}, {\em Epsilon--first policies for budget--limited multi-armed
  bandits},  (2010).

\bibitem{tran2012knapsack}
{\sc L.~Tran-Thanh, A.~Chapman, A.~Rogers, and N.~Jennings}, {\em Knapsack
  based optimal policies for budget--limited multi--armed bandits}, in
  Proceedings of the AAAI Conference on Artificial Intelligence, vol.~26, 2012.

\bibitem{Vapnik98}
{\sc V.~Vapnik}, {\em Statistical Learning Theory}, Wiley, 1998.

\bibitem{Vershynin_2018}
{\sc R.~Vershynin}, {\em High-dimensional probability: An introduction with
  applications in data science}, vol.~47, Cambridge university press, 2018.

\bibitem{xu2021budget}
{\sc Y.~Xu and A.~Narayan}, {\em Budget-limited distribution learning in
  multifidelity problems}, arXiv preprint arXiv:2105.04599,  (2021).

\bibitem{Zhang_2018}
{\sc Y.~Zhang, N.~H. Kim, C.~Park, and R.~T. Haftka}, {\em Multifidelity
  surrogate based on single linear regression}, {AIAA} Journal, 56 (2018),
  pp.~4944--4952, \url{https://doi.org/10.2514/1.j057299},
  \url{https://doi.org/10.2514%2F1.j057299}.

\bibitem{zhao2019multi}
{\sc Q.~Zhao}, {\em Multi-armed bandits: Theory and applications to online
  learning in networks}, Synthesis Lectures on Communication Networks, 12
  (2019), pp.~1--165.

\end{thebibliography}

\end{document}